\numberwithin{equation}{section}
\newtheorem{theorem}{Theorem}[section]
\newtheorem{prop}[theorem]{Proposition}
\newtheorem{definition}[theorem]{Definition}
\newtheorem{lem}[theorem]{Lemma}
\newtheorem{cor}[theorem]{Corollary}
\theoremstyle{remark}
\newtheorem{remark}[theorem]{Remark}
\def\cA{\mathcal{A}}
\def\cE{\mathcal{E}}
\def\cH{\mathcal{H}}
\def\cL{\mathcal{L}}
\def\cM{\mathcal{M}}
\def\M{\mathsf{M}}
\def\R{\mathbb{R}}
\def\bC{\mathbb{C}}
\def\bN{\mathbb{N}}
\def\Nz{\mathbb{N}_0}
\def\Lis{\mathcal{L}{\rm{is}}}
\def\B{\mathbb{B}}
\def\id{{\rm{id}}}
\def\min{{\rm min}}
\def\max{{\rm max}}
\def\FD{\underline{\Delta}_F}
\begin{document}

\title[The fractional porous medium equation on conical manifolds II]{The fractional porous medium equation on manifolds with conical singularities II}

\author[N. Roidos]{Nikolaos Roidos}
\address{Department of Mathematics, University of Patras, 26504 Rio Patras, Greece}
\email{roidos@math.upatras.gr}

\author[Y. Shao]{Yuanzhen Shao}
\address{Department of Mathematics,
         The University of Alabama,
         Box 870350,
         Tuscaloosa, AL 35487-0350,
         USA}
\email{yshao8@ua.edu}

\subjclass[2010]{26A33, 35K65, 35K67, 35R01, 35R11, 76S05}
\keywords{Mellin-Sobolev spaces, Markovian semigroups, porous medium equation, nonlinear nonlocal diffusion, Riemannian manifolds with singularities}

\begin{abstract}
This is the second of a series of two papers which studies the fractional porous medium equation, $\partial_t u +(-\Delta)^\sigma (|u|^{m-1}u  )=0 $ with $m>0$ and $\sigma\in (0,1]$, posed on a Riemannian manifold with isolated conical singularities. 
The first aim of the article is to derive some useful properties for the Mellin-Sobolev spaces including the Rellich-Kondrachov  Theorem and Sobolev-Poincar\'e, Nash and Super Poincar\'e  type inequalities.  
The second part of the article is devoted to the study the Markovian extensions of the conical Laplacian operator and its fractional powers.
Then based on the obtained results, 
we establish existence and uniqueness of a global strong solution for $L_\infty-$initial data and all $m>0$.
We further investigate a number of properties of the solutions, including comparison principle, $L_p-$contraction and conservation of mass.
Our approach is quite general and thus is applicable to a variety of similar problems on manifolds with more general singularities.
\end{abstract}

\maketitle

\section{Introduction}\label{Section 1}

The study of nonlocal diffusion equations has become an active research field, both from the point of view of mathematical theory and in relation with its wide applications to real-world problems in material sciences, finance, biology and so forth, c.f. \cite{Agui, Bucur, Carreras, Janev}.
The fractional porous medium equation, arguably, is one of the most extensively studied nonlinear nonlocal diffusion equations. 
There is a vast amount of work on fractional porous medium equations in Euclidean spaces, cf. \cite{AkaSchSeg16, AlpEll16, BonSireVaz15, BonVaz15, BonVaz16, FanLiZhang, PunTer14, PalRodVaz11, PalRodVaz12}, just to name a few.
 
In this paper, we consider the fractional porous medium equation:
\begin{equation}
\label{S1: FPME}
\left\{\begin{aligned}
\partial_t u +(-\Delta_g)^\sigma (|u|^{m-1}u  )&=0   &&\text{on}&&\M\times (0,\infty);\\
u(0)&=u_0   &&\text{on}&&\M
\end{aligned}\right.
\end{equation}
for $m>0$ and $\sigma\in (0,1]$, on an $(n+1)$--dimensional Riemannian manifolds $(\M,g)$ with isolated conical singularities. 
Here $\Delta_g$ is the Laplace-Beltrami operator associated with the conical metric $g$, called the conical Laplacian.
A conical manifold, in the simplest case, is  a  product $\M=(0,\varepsilon)\times N$ with a smooth and compact manifold $(N,g_N)$ near an conical singularity, equipped with the degenerate metric 
$
g(x,y)=dx^2 +x^2 g_N(y)
$
for $(x,y)\in (0,\varepsilon)\times N$.

The objective of our work is threefold. 

First, the theory of Mellin-Sobolev spaces is widely used in the study of differential equations on conical manifolds. Briefly speaking, Mellin-Sobolev spaces are weighted Sobolev spaces with weights defined in terms of $x$. In this article, we establish several interesting properties like Rellich-Kondrachov Theorem, Sobolev-Poincar\'e inequality, Nash inequality and Super Poincar\'e inequalities for Mellin-Sobolev spaces with various weights. Some of these properties are already present for some limited cases in   previous works. However, they have never been established in this generality. 
In particular, the derived properties can be used  to investigate the asymptotic behavior of the solution to \eqref{S1: FPME} on conical manifolds. To keep the paper in a reasonable length, we will explore this topic in a subsequent paper.

Second, we conduct a careful study of an important class of closed extensions, the Markovian extension, c.f. Section~\ref{Section 3.2}, of the conical Laplacian operator $\Delta_g$ and the Markovian property of the fractional conical Laplacian.
A major difficulty in the study of the conical Laplacian operator is  that it does not have a  canonical choice for a closed extension, as noticed  by J. Br\"uning and R. Seeley \cite{BruSee88}.  See also \cite{Cheeger79, Cheeger83, Cheeger87}.
Indeed,  the domains of the minimal and the maximal closed extensions of $\Delta_g$ differ by a non-trivial  finite dimensional space. 
Functions in this finite dimensional space admit certain asymptotic behaviors as $x\to 0^+$, as we will see in Section~\ref{Section 3.1}.
Interested readers may find more details in \cite{CorSchSei03, Les97, RoiSch14, RoiSch15,SchSei05}.

Among all possible closed extensions of a differential operator, the Markovian extension plays an important role in the study of partial differential equations.
For example, once we find a  Markovian extension of a  differential operator in $L_2(\M)$,  then the extension or restriction of the operator onto $L_p(\M)$ for any $p\in (1,\infty)$ generates a positive contraction analytic semigroup.
Following a  transference method argument, see for instance \cite{Shao18}, we can show that the operator has  maximal $L_p-$regularity property.	
It is well-known that, combining with appropriate techniques, maximal $L_p-$regularity property  can be used to study the well-posedness, regularity and stability of differential equations or systems.
The monograph \cite{PruSim16} by J.~Pr\"uss and G.~Simonett presents an in-depth discussion of maximal $L_p-$regularity theory and its applications.
Despite of its importance,  very little is known about the Markovian extension of differential operators on conical manifolds.
Indeed, to the best of our knowledge, even for the conical Laplacian, many properties like the uniqueness of  Markovian extension  are only known for the case ${\rm dim}(\M)=2$.
In this article,  we carefully study  the existence and uniqueness of Markovian extension of the conical Laplacian, cf. Propositions~\ref{S4: Delta-positivity} and  \ref{Prop: unique Markov}, in arbitrary dimensions.
Denote  by  $\FD$ the unique Markovian extension.
Then the dissipativity of the extensions or restriction of $-\FD$ to $L_p(\M)$ follows from a classic argument by E. Davies \cite{Dav89}.
Based on these results, we are able   to construct the fractional  conical Laplacian operators  and show that the fractional operator is again Markovian. 

The third and most important objective of this article is to find a suitable approach to \eqref{S1: FPME}, which is capable of dealing with both possibly degenerate initial data and the presence of conical singularities.
In Euclidean space $\R^N$, to the best of our knowledge, both the Cauchy and Dirichlet problems of 
\eqref{S1: FPME} were studied via one of the following approaches or their analogues.

(1) In \cite{PalRodVaz11, PalRodVaz12}, the authors constructed the Fractional Laplacian via the Caffarelli-Silvestre extension \cite{CafSil07}, i.e. consider the problem:
$$
\nabla \cdot (y^{1-2\sigma} \nabla w)=0 \quad  (x,y)\in \R^N\times \R_+; \quad w(x,0)=u(x),\quad x\in \R^N.
$$
For some constant $C_\sigma$, it holds that
$$
(-\Delta)^\sigma u(x)= -C_\sigma \lim\limits_{y\to 0} y^{1-2\sigma}\frac{\partial w}{\partial y}(x,y).
$$
Then \eqref{S1: FPME} was transformed into a quasi-stationary problem  with a dynamical boundary condition on $\R^N \times \R_+$, and solutions were obtained by means of Crandall-Liggett's Theorem.
See also \cite{AlpEll16, PunTer14}.

(2) In   \cite{BonSireVaz15, BonVaz15, BonVaz16}, M. Bonforte, Y. Sire and J.L. V\'azquez used the Spectral Fractional Laplacian (SFL) in a bounded domain $\Omega$ defined by:
$$
(-\Delta)^\sigma u (x):= \frac{1}{\Gamma(-\sigma)}\int_0^\infty (e^{t\Delta}u(x)-u(x))\,\frac{dt}{t^{1+\sigma}}= \sum\limits_{j=1}^\infty \lambda_j^\sigma \hat{u}_j \phi_j(x),
$$
where $(\phi_k, \lambda_k)_{k=1}^\infty$ denote an orthonormal basis of
$L_2(\Omega)$ consisting of eigenfunctions of the Dirichlet Laplacian $-\Delta$ in $\Omega$  and their corresponding eigenvalues.
$\hat{u}_k$ are the Fourier coefficients of $u$.
A solution  was obtained via monotone operators techniques in Hilbert spaces or approximation by mild solutions.
This approach  relies on either good estimates of the Green functions or certain growth condition of $\lambda_k$. Unfortunately, both are unknown on general conical manifolds.

(3) Another way to construct the Fractional Laplacian is by using the integral representation in terms of hypersingular kernels:
$$
(-\Delta)^\sigma u (x)= C_{N,\sigma} P.V.\int_{\R^N} \frac{u(x) - u(y)}{|x+y|^{N+2\sigma}}\, dy.
$$
It is understood that $u$ is extended by zero outside $\Omega$.
This is called Restricted Fractional Laplacian (RFL). 
Approaches based on RFL are similar to those in (2).

The presence of conical singularities in the underlying space $\M$ changes Equation~\eqref{S1: FPME} in a fundamental way.
Indeed, it is quite obvious that (RFL) is not applicable to non-Euclidean spaces in general. 
For the reason stated above, (SFL) also seems to be restrictive in the case of a conical manifold.
In the Caffarelli-Silvestre extension approach, a crucial component   is the trace and extension theorems between a weighted space defined on $\R^N \times \R_+$ and $\dot{H}^{\sigma}(\R^N)$. However, if the underlying space $\M$ is a conical manifold, $\M\times \R_+$ becomes a corner manifold and such trace and extension theorems do not seem to be true in this case.

In \cite{RoidosShao}, we initiated the study of \eqref{S1: FPME} on conical manifolds, where we proved that operators of the form $w (-\Delta_g)^\sigma$ enjoys maximal $L_p-$regularity property for some proper strictly positive continuous function $w$. 
Then combining with a fixed point theorem argument, we established the existence and uniqueness of classical solution 	for strictly positive H\"older continuous data.
To deal with general bounded initial data, in this article, we take a completely different approach to \eqref{S1: FPME}.
Our approach only relies on the existence of a Markovian extension of $\Delta_g 	$.
Based on the Markovian properties of $(-\FD)^\sigma$ proved in Section~\ref{Section 4}, we are able to establish  the following for \eqref{S1: FPME}.
\begin{theorem}
\label{S1: Main theorem}
Suppose that $(\M,g)$ is an $(n+1)$-dimensional conical manifold. 
Then for any  $ u_0\in  L_\infty(\M)$, \eqref{S1: FPME} with $\sigma\in (0,1)$ and $m>0$ has a unique global strong solution $u$. Moreover, the solution $u$ satisfies the following properties.
\begin{enumerate}
\item  {\em Continuous dependence on the  initial data:} The solution depends continuously on $u_0$ in the norm $C([0,T), L_1(\M))$.
\item  {\em Comparison principle:} If $u, \hat{u}$ are the unique strong solutions to \eqref{S8: PME}  with initial data $u_0, \hat{u}_0$, respectively, then $u_0\leq \hat{u}_0$ a.e. implies $u \leq \hat{u}$ a.e.
\item {\em $L_p$-contraction:} For all $0 \leq t_1 \leq t_2$ and $1\leq p\leq \infty$
$$
\|u(t_2)\|_p \leq \|u(t_1)\|_p.
$$
\item  {\em Conservation of mass:} For  any $t\geq 0$, it holds   that
$$
\int_\M u(t) \, d\mu_g = \int_\M u_0 \, d\mu_g.
$$
\end{enumerate}
\end{theorem}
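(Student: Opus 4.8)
The plan is to realize the regularized problem \eqref{S1: FPME-omega} as the nonlinear-semigroup flow generated by a single $m$-accretive operator on $L_1(\M)$, read off the qualitative properties (1)--(3) from the structure of that operator, and finally recover \eqref{S1: FPME} by letting $\omega\to0$. Throughout write $\beta(r)=|r|^{m-1}r$, a continuous strictly increasing maximal monotone function, and $B_\omega:=(\omega-\Delta_g)^\sigma$ for the fixed closed extension of $\Delta_g$. The first step is to show that $B_\omega$ generates a sub-Markovian semigroup on each $L_p(\M)$: since $e^{t(\Delta_g-\omega)}$ is positivity preserving and $L_p$-contractive for the chosen extension, the subordination encoded in the resolvent formula of the introduction transfers both properties to $e^{-tB_\omega}$. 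Equivalently, $B_\omega$ is a completely accretive operator, which is the structural input driving everything that follows.

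Second, I would set $\mathcal{A}_\omega u:=B_\omega\beta(u)$ with its natural domain in $L_1(\M)$ and prove that the closure of $\mathcal{A}_\omega$ is $m$-accretive and completely accretive. Complete accretivity of $B_\omega$ together with monotonicity of $\beta$ yields, for $\lambda>0$, unique solvability of the resolvent equation $u+\lambda B_\omega\beta(u)=f$ along with the order- and simultaneous-$L_p$-contraction estimates for the resolvent, while a Br\'ezis--Strauss type argument supplies the range condition. Crandall--Liggett's theorem then produces a contraction semigroup $S_\omega(t)$ on $\overline{\dom(\mathcal{A}_\omega)}=L_1(\M)$ whose orbits $u(t)=S_\omega(t)u_0$ are mild solutions of \eqref{S1: FPME-omega}; properties (1)--(3) are immediate, the $L_1$-contraction of $S_\omega$ giving continuous dependence, order preservation of the resolvent giving the comparison principle, and complete accretivity giving $\|u(t_2)\|_p\le\|u(t_1)\|_p$ for every $p\in[1,\infty]$, in particular propagation of the $L_\infty$-datum. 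To upgrade regularity, testing the implicit Euler scheme against smooth compactly supported functions identifies the mild solution as a weak solution; and since $\beta$ is homogeneous, the B\'enilan--Crandall regularizing estimate bounds $t\,\partial_t u$ in $L_1(\M)$, so that $\partial_t u\in L_1$ for a.e.\ $t$ and hence $\mathcal{A}_\omega u=-\partial_t u\in L_1$, which promotes $u$ to a strong solution precisely when $m\ge1$ (for $m<1$ the scheme yields only a weak solution).

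Third comes the limit $\omega\to0$ together with conservation of mass. All the preceding estimates are uniform in $\omega$, and $(s+\omega-\Delta_g)^{-1}\to(s-\Delta_g)^{-1}$ as $\omega\to0^+$ for $s>0$; through the resolvent formula this gives convergence of $(\lambda+B_\omega)^{-1}$, hence of the nonlinear resolvents $(I+\lambda\mathcal{A}_\omega)^{-1}$, and the nonlinear Trotter--Kato theorem upgrades this to $S_\omega(t)u_0\to S_0(t)u_0$ in $C([0,T),L_1(\M))$, the limit being the desired solution of \eqref{S1: FPME} and inheriting (1)--(3). For (4) I would differentiate the mass, formally $\frac{d}{dt}\int_\M u\,d\mu_g=-\int_\M(-\Delta_g)^\sigma\beta(u)\,d\mu_g$, and argue that the right-hand side vanishes because constants lie in the kernel of $(-\Delta_g)^\sigma$ and the operator is symmetric, so $\int_\M(-\Delta_g)^\sigma\beta(u)\,d\mu_g=\langle\beta(u),(-\Delta_g)^\sigma\mathbf{1}\rangle=0$.

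I expect this last step to be the main obstacle. At $\omega=0$ the conical fractional Laplacian is no longer boundedly invertible, since $0$ sits at the bottom of its spectrum and constants belong to its kernel, so re-establishing $m$-accretivity of $\mathcal{A}_0$ and the uniformity required by Trotter--Kato demands precise mapping properties of $(-\Delta_g)^\sigma$ rather than the soft semigroup estimates available for $\omega>0$. The mass identity is equally delicate: functions in the maximal domain carry nontrivial asymptotics as $x\to0^+$, so the integration by parts hidden in $\langle\beta(u),(-\Delta_g)^\sigma\mathbf{1}\rangle=0$ must be justified by showing that the flux through the conical tip vanishes for the chosen closed extension, which is exactly where the analysis of the extension's domain from Section~\ref{Section 3.1} becomes indispensable.
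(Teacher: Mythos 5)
Your treatment of the regularized problem \eqref{S1: FPME-omega} for fixed $\omega>0$ follows the paper's route almost exactly: sub-Markovianity of $e^{-t(\omega-\Delta_g)^\sigma}$ by subordination, the Br\'ezis--Strauss theorem for the range condition, Crandall--Liggett for the mild solution, and the resolvent's order and $L_p$-contraction properties for (1)--(3). The mass identity is also resolved the way the paper does it: constants lie in $D(\underline{\Delta}_F)$ (this is where the asymptotics of the Friedrichs domain near the tip enter, via the space $\C_\omega$), hence in $D((-\underline{\Delta}_F)^{\sigma/2})$ with $(-\Delta_g)^{\sigma/2}\mathbf{1}=0$, so $\phi\equiv 1$ is an admissible test function and no boundary flux needs to be examined separately.

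The genuine gap is the passage $\omega\to 0$. You propose the nonlinear Trotter--Kato theorem, which requires knowing beforehand that the limit operator $\mathcal{A}_0 u=(-\Delta_g)^\sigma\Phi(u)$ is itself $m$-accretive on $L_1(\M)$, i.e.\ that $\mathrm{Rng}(\id+\lambda\mathcal{A}_0)=L_1(\M)$. The Br\'ezis--Strauss argument that delivers this range condition for $\omega>0$ hinges on the coercivity $C\|u\|_1\leq\|(\omega-\Delta_g)^\sigma u\|_1$ (Corollary~\ref{S4: inverse}), which fails at $\omega=0$ precisely because constants are in the kernel --- the very fact underlying conservation of mass. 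You flag this as ``the main obstacle'' but offer no substitute, so the construction of the limit solution is not actually carried out. The paper avoids the issue entirely: it never establishes $m$-accretivity at $\omega=0$. Instead it extracts from the implicit Euler scheme energy estimates that are uniform in $\omega$ (the bound \eqref{S4: unif est 1} on $\|(\omega-\Delta_g)^{\sigma/2}\Phi(u_\omega)\|_{L_2L_2}$ and, via the B\'enilan--Crandall time-regularity estimate \eqref{S4: der est for u_omega}, the locally uniform bound \eqref{S4: unif est 6}), then uses weak compactness together with the compact embedding $D((-\underline{\Delta}_F)^{\sigma/2})\xhookrightarrow{c} L_2(\M)$ (Proposition~\ref{S4: char domain of Delta-Lp}) to identify $\Phi(u_{\omega_k})\to\Phi(u)$ and pass to the limit in the weak formulation; uniqueness of the limit is then proved separately by the integrated-test-function duality argument (Lemma~\ref{S5: Lemma-unique}). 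Two further points your sketch glosses over: the continuity $u\in C([0,T),L_1(\M))$ at $t=0$ requires its own approximation argument (it is not supplied by the weak limit), and properties (1)--(3) for the $\omega=0$ solution must be transferred through the limit rather than read off from the $\omega>0$ semigroups. Finally, the B\'enilan--Crandall estimate you invoke for strong solutions degenerates at $m=1$ (the constant is $2/((m-1)t)$); the paper treats $m=1$ separately by linear analytic semigroup theory.
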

We would like to refer the reader to Definitions~\ref{Def: weak sol} and \ref{Def: Strong Sol} for the definitions of solutions used in the paper.
Analogous results also hold when $\sigma=1$, i.e. the usual porous medium equation, see Section~\ref{Section 8}.

It is worthwhile mentioning that the method in this article can be applied to similar problems on manifolds with more general singularities, including those with cuspidal, edge and corner singularities. We will discuss this idea in a future work.
The method in this paper also seems to give an alternative approach to the Cauchy and Dirichlet problem of the fractional porous medium equation in $\R^N$.


The paper is organized as follows:

In Section~\ref{Section 2}, we give the precise  definitions of conical manifolds and   the Mellin-Sobolev spaces. Then we   prove a  Rellich-Kondrachov type theorem for Mellin-Sobolev spaces.

In Section~\ref{Section 3.1}, we give a brief review of the closed extensions of the conical Laplacian operator $\Delta_g$. 
In Section~\ref{Section 3.2}, we study the Markovian property of the Friedrichs extension of $\Delta_g$ and fix the closed extensions of $\Delta_g$ that we used in the analysis of \eqref{S1: FPME}. These closed extensions are denoted by $\underline{\Delta}_{F,p}$ for $1\leq p <\infty$.
In Section~\ref{Section 3.3}, we prove that $\Delta_g$ indeed admits only one Markovian extension.
Section~\ref{Section 3.4} is where we establish the Sobolev-Poincar\'e, Nash and Super Poincar\'e inequalities for Mellin-Sobolev spaces.

In Section~\ref{Section 4.1}, we construct the fractional powers of $\omega-\underline{\Delta}_{F,p}$ for all $\omega\geq 0$; and then in Section~\ref{Section 4.2}, we derive the crucial properties of  these operators. 
These results form the theoretic basis for the study of \eqref{S1: FPME}.

In Section~\ref{Section 5}, we employ the results from Sections 2-4 and prove that \eqref{S1: FPME} admits a unique global weak solution depending continuously on the initial data. In Section~\ref{Section 6}, we further investigate various properties of weak solutions ((2)-(4) of Theorem~\ref{S1: Main theorem}).
In Section~\ref{Section 7}, we prove that   \eqref{S1: FPME} indeed has a unique global strong solution.

In Section~\ref{Section 8}, we include a discussion for the porous medium equation, i.e. $\sigma=1$ in \eqref{S1: FPME}, and show that results parallel to those in Theorem~\ref{S1: Main theorem} can be established.

\textbf{Notations:} 

For any two Banach spaces $X,Y$, $X\doteq Y$ means that they are equal in the sense of equivalent norms. The notations
$$
X\hookrightarrow Y, \qquad X\xhookrightarrow{d} Y, \qquad X\xhookrightarrow{c} Y
$$
mean that $X$ is continuously embedded, densely embedded and compactly embedded into $Y$, respectively.
$\cL(X,Y)$ denotes the set of all bounded linear maps from $X$ to $Y$, and $\cL(X):=\cL(X,X)$. Moreover, $\Lis(X,Y)$ stands for the subset of $\cL(X,Y)$ consisting of all bounded linear isomorphisms from $X$ to $Y$. 
Given a sequence $(u_k)_k:=(u_1,u_2,\cdots)$ in $X$, $u_k \rightharpoonup u$ in $X$ means that $u_k$ converge weakly to some $u\in X$.
Given a densely-defined operator $\cA$ in $X$, $D(\cA)$ stands for the domain of $\cA$.


In addition, $\R_+:=[0,\infty)$ and $\Nz:=\bN\cup \{0\}$.




\section{Preliminaries}\label{Section 2}

\subsection{Conical Manifolds}\label{Section 2.1}

We can construct an $(n+1)$-dimensional {\em conical manifold} $\M$  from a $C^\infty$-compact manifold $(\tilde{\M}, \tilde{g})$ with 
possibly disconnected smooth boundary. Let $(\B,h)=(\partial\tilde{\M}, \tilde{g}_{\partial\tilde{\M}})$.
We may assume that $\B=\bigsqcup_{i=1}^{k_\B} \B_i$ for some $k_\B\in \bN$, where $\B_i$ are closed, smooth and connected.
We equip $\M=\tilde{\M}\setminus \B$ with a smooth metric $g=\{g_{ij}\}_{i,j\in\{0,1,\cdots,n\}}$ such that in local coordinates 
$$
(y_0,y_1,\cdots,y_n)=(x,y_1,\cdots,y_n)=(x,y)
$$ 
of a closed collar neighbourhood $(0,1]\times \B$ of the boundary $(\B,h)$
$$
g(x,y)= dx^2 + x^2 h(y),\quad (x,y)\in (0,1]\times\B.
$$
Here $x$ is a boundary defining function of $\tilde{\M}$ and $h=\{h_{ij}\}_{j,j\in\{1,\cdots,n\}}$. Outside $(0,1]\times \B$, $g$ is equivalent to $\tilde{g}$  in the sense that there is a constant $c\geq 1$ such that
$$
\displaystyle  (1/c)\tilde{g}(\xi,\xi)(\mathsf{p}) \leq g(\xi,\xi) \leq c\tilde{g}(\xi,\xi)(\mathsf{p}),\quad \xi\in T_{\mathsf{p}} \M , \, \, \mathsf{p}\in \M.
$$  

\begin{remark}
\begin{itemize}
\item[]
\item[1. ] In this article, we will only consider the non-trivial case $n\geq 1$.
\item[2. ] Under the compactness assumption of $(\tilde{\M}, \tilde{g})$, we have
$L_q(\M) \hookrightarrow L_p(\M) $ for $q>p$, since ${\rm Vol}(\M)<\infty$. 
We pose the compactness assumption mainly for the sake of simplicity. In a subsequent paper \cite{RoidosShaoPre1}, we will show with slight modification the method in this article can be applied  to non-compact case as well.
\end{itemize}
\end{remark}


\subsection{Mellin-Sobolev spaces}\label{Section 2.2}
Here we will describe a scale of weighted Sobolev spaces $\cH^{s,\gamma}_p (\M)$. These weighted Sobolev spaces are widely used in the analysis on conical manifolds. See \cite{Les97, RoiSch14, RoiSch15}.

Let $I:=(0,1]$.
We pick a cut-off function $\psi$ on $I$, which means $\psi\in C^\infty(I,[0,1])$ with $\psi\equiv 1$ near $0$ and $\psi\equiv 0$ near $1$.

For $k\in\Nz$,  $\cH^{k,\gamma}_p(\M)$ is the space of all functions $u\in H^k_{p,loc}(\M)$ such that near the conical singularities, or more precisely, in $I\times \B$
$$
x^{\frac{n+1}{2}-\gamma}(x \partial_x)^j \partial^\alpha_y (\psi u)  \in L_p( I \times \B, \frac{dx}{x}dy),\quad j+|\alpha|\leq k, \,\, \alpha\in \bN^n_0,
$$
where $(x,y)\in I\times \B$.
Here $\partial_y^\alpha$ can be considered as the derivatives in local coordinates of $\B$, and we will use this slight abuse of notation throughout this paper.


To understand the motivation of this somewhat unusual definition, let us consider the flat cone $\M= I\times S^n$ in $\R^{n+1}$, where $S^n$ is the $n$-sphere. Taking polar coordinates in $\M$, then $\cH^{0,0}_2(\M)$ coincides with the usual $L_2(B_1)$ space ($B_1$ is the closed unit ball in $\R^{n+1}$). 

For arbitrary $\gamma\in\R$, we define
$$
S_\gamma: C_c^\infty(I\times \B) \to C_c^\infty(\R_+\times \B): \, u(x,y)\mapsto e^{(\gamma-\frac{n+1}{2})t} u(e^{-t},y).
$$
Then for any $s\geq 0$ and $\gamma\in\R$,  $\cH^{s,\gamma}_p(\M)$ is the space of all distributions on $\M$ such that
$$
\|u\|_{\cH^{s,\gamma}_p(\M)}=\|S_\gamma ( \psi u )\|_{H^s_p(\R_+\times\B)} +  \| (1- \psi) u \|_{H^s_p(\M)} <\infty.
$$
It is understood that $\psi$ is extended to be zero outside $I \times \B$.




\begin{lem}
\label{S2.1: lem-isometry}
For all $s\geq 0$, $
S_\gamma \in \Lis(\cH^{s,\gamma}_p(I\times \B), H^s_p(\R_+ \times \B)).
$
\end{lem}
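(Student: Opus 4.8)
The plan is to reduce the statement to the case of integer smoothness, where it becomes an elementary change of variables, and then to pass to fractional $s$ by complex interpolation. The key observation is that, writing $\beta := \gamma - \frac{n+1}{2}$, the transform $S_\gamma$ intertwines the Fuchs-type vector field $x\partial_x$ on $I\times\B$ with the translation-invariant field $\partial_t$ on $\R_+\times\B$: a direct computation from the definition gives
\[
\partial_t\circ S_\gamma = S_\gamma\circ(\beta - x\partial_x), \qquad \partial_y^\alpha\circ S_\gamma = S_\gamma\circ\partial_y^\alpha ,
\]
so that $S_\gamma(x\partial_x)^j\partial_y^\alpha u = (\beta-\partial_t)^j\partial_y^\alpha\, S_\gamma u$. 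I would also record the explicit inverse $S_\gamma^{-1}v(x,y)=x^{\beta}\,v(-\log x,y)$, which will supply surjectivity.

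For integer $s=k\in\Nz$, I would combine the commutation identity with the substitution $x=e^{-t}$. Since $\tfrac{dx}{x}=dt$ and $x^{\frac{n+1}{2}-\gamma}=e^{\beta t}$, for any $w$ one has the exact identity $\|x^{\frac{n+1}{2}-\gamma}w\|_{L_p(I\times\B,\frac{dx}{x}dy)}=\|S_\gamma w\|_{L_p(\R_+\times\B,dt\,dy)}$; applying this to $w=(x\partial_x)^j\partial_y^\alpha u$ and using the identity above turns each defining seminorm of $\cH^{k,\gamma}_p(I\times\B)$ into $\|(\beta-\partial_t)^j\partial_y^\alpha S_\gamma u\|_{L_p}$. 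Because the passage $\{(x\partial_x)^j\}_{j\le k}\mapsto\{(\beta-\partial_t)^j\}_{j\le k}$ is triangular with unit leading coefficients, summing over $j+|\alpha|\le k$ yields the two-sided estimate $\|u\|_{\cH^{k,\gamma}_p}\simeq\|S_\gamma u\|_{H^k_p}$. This shows $S_\gamma$ is bounded above and below; surjectivity follows because the same computation shows $S_\gamma^{-1}$ maps $H^k_p(\R_+\times\B)$ boundedly into $\cH^{k,\gamma}_p$, and $S_\gamma S_\gamma^{-1}=\id$, $S_\gamma^{-1}S_\gamma=\id$ hold on the dense subspaces $C_c^\infty$, hence everywhere by continuity.

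For general $s\ge 0$ I would argue by complex interpolation. When $s\notin\Nz$, fix integers $k_0<s<k_1$ and $\theta\in(0,1)$ with $s=(1-\theta)k_0+\theta k_1$. The Bessel potential scale on the (smooth, boundaryless) base $\B$ gives $[H^{k_0}_p,H^{k_1}_p]_\theta\doteq H^s_p(\R_+\times\B)$, while $\cH^{s,\gamma}_p(I\times\B)$ is identified with the corresponding interpolation space $[\cH^{k_0,\gamma}_p,\cH^{k_1,\gamma}_p]_\theta$. Since $S_\gamma$ is an isomorphism at both endpoints, functoriality of $[\cdot,\cdot]_\theta$ transfers this to the intermediate spaces, giving $S_\gamma\in\Lis(\cH^{s,\gamma}_p,H^s_p)$.

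The main obstacle I anticipate is precisely this last step: one must ensure the two interpolation identifications are compatible and hold with equivalent norms, which is where the genuine analytic input lies (and which forces the restriction $1<p<\infty$ for the Bessel potential characterization, the integer case above already covering $p=1$). If instead $\cH^{s,\gamma}_p$ is taken to be defined directly through the $S_\gamma$ norm, the fractional case is immediate and only completeness together with the density of $C_c^\infty$ in $H^s_p(\R_+\times\B)$ remain to be verified.
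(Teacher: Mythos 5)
Your argument is correct in substance but does far more work than the paper, and the final paragraph of your proposal already diagnoses why: the paper \emph{defines} $\cH^{s,\gamma}_p$ for arbitrary $s\geq 0$ through the norm $\|S_\gamma(\omega u)\|_{H^s_p(\R_+\times\B)}+\|(1-\omega)u\|_{H^s_p(\M)}$, so for functions on $I\times\B$ the lemma is essentially tautological, and the paper's proof is the one-line remark that it follows from the definition. Your integer-case computation (the intertwining $\partial_t\circ S_\gamma=S_\gamma\circ(\beta-x\partial_x)$, the measure identity $\tfrac{dx}{x}=dt$, and the triangular change of basis between $\{(x\partial_x)^j\}_{j\le k}$ and $\{(\beta-\partial_t)^j\}_{j\le k}$) is exactly right and is in fact the content that makes the paper's two definitions --- the intrinsic one for $k\in\Nz$ and the pullback one for general $s$ --- consistent; in that sense your argument supplies a detail the paper leaves implicit. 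The fractional step, however, is where your route has a soft gap: the identification $\cH^{s,\gamma}_p(I\times\B)\doteq[\cH^{k_0,\gamma}_p,\cH^{k_1,\gamma}_p]_\theta$ is asserted but not available off the shelf --- the paper's own interpolation result (Lemma~\ref{S2.1: Sobolev-interpolation}) is for the \emph{real} method and only gives embeddings with $\varepsilon$-losses --- and the cleanest proof of the exact complex-interpolation identity with equal weights is precisely to conjugate by $S_\gamma$ to the standard scale on the cylinder, i.e.\ the lemma itself. So the interpolation detour is either circular or requires an independent (and nontrivial) intrinsic characterization of $\cH^{s,\gamma}_p$ for fractional $s$. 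Since the paper's definition short-circuits all of this, you should simply invoke it, keeping your integer-case computation only as a consistency check.
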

\begin{proof}
This easily follows from the definition of Mellin-Sobolev spaces.
\end{proof}

It is immediate from the definition of Mellin-Sobolev spaces that 
\begin{equation}
\label{Sobolev embedding}
\cH^{s_1,\gamma_1}_p(\M)\hookrightarrow \cH^{s_0,\gamma_0}_q(\M).
\end{equation}
if (1) $s_1 -\frac{n+1}{p} =  s_0- \frac{n+1}{q}$, $s_1 \geq  s_0$ and $\gamma_1 \geq \gamma_0$ or (2) $s_1 -\frac{n+1}{p} > s_0- \frac{n+1}{q}$, $s_1 \geq  s_0$ and $\gamma_1 > \gamma_0$.

We will prove a Rellich-Kondrachov type theorem on conical manifolds. See also \cite{Fedosov} and \cite[Remark~2.1(b)]{Seiler01}.
\begin{prop}
\label{S2.1: Rellich-thm}
Assume that $s_1 -\frac{n+1}{p}> s_0- \frac{n+1}{q}$, $s_1>s_0$ and $\gamma_1> \gamma_0$. Then 
$$
\cH^{s_1,\gamma_1}_p(\M)\xhookrightarrow{c} \cH^{s_0,\gamma_0}_q(\M).
$$
\end{prop}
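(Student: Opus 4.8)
The plan is to localize with the cut-off $\omega$, writing $u=\omega u+(1-\omega)u$, and to exploit that the norm of $\cH^{s,\gamma}_p(\M)$ is equivalent to $\|S_\gamma(\omega u)\|_{H^s_p(\R_+\times\B)}+\|(1-\omega)u\|_{H^s_p(\M)}$. Compactness of the embedding then follows once I show that each of these two summands, viewed as a map into the corresponding summand of the target, is compact. The two pieces are governed by entirely different mechanisms: the away-from-tip piece by the classical Rellich--Kondrachov theorem, and the near-tip piece by the weight gap $\gamma_1>\gamma_0$.

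For the away-from-tip piece, note that $1-\omega$ vanishes near $x=0$, so $(1-\omega)u$ is supported in a fixed relatively compact region $K\subset\M$ on which $g$ is equivalent to a smooth metric on a compact manifold with boundary. There $\cH^{s,\gamma}_p$ reduces to an ordinary Sobolev space, and the hypotheses $s_1>s_0$ and $s_1-\frac{n+1}{p}>s_0-\frac{n+1}{q}$ are exactly those under which the classical Rellich--Kondrachov theorem yields the compact embedding $H^{s_1}_p(K)\xhookrightarrow{c}H^{s_0}_q(K)$. This summand thus requires no new idea.

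The heart of the proof is the near-tip piece, which I would transport to the infinite half-cylinder $\R_+\times\B$ through the isomorphism $S_\gamma$ of Lemma~\ref{S2.1: lem-isometry}. Directly from the definition of $S_\gamma$ one computes, for a single function $u$, that $S_{\gamma_0}(\omega u)=e^{-\delta t}\,S_{\gamma_1}(\omega u)$ with $\delta:=\gamma_1-\gamma_0>0$, the factor decaying as $t\to\infty$ (that is, as $x\to0^+$). Consequently the restriction of the identity embedding to the collar factors as $S_{\gamma_0}^{-1}\circ M_\delta\circ S_{\gamma_1}$, where $M_\delta$ denotes multiplication by $e^{-\delta t}$; since $S_{\gamma_1}$ and $S_{\gamma_0}^{-1}$ are isomorphisms, it suffices to prove that $M_\delta\colon H^{s_1}_p(\R_+\times\B)\to H^{s_0}_q(\R_+\times\B)$ is compact.

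This compactness of $M_\delta$ is where the main obstacle lies: the plain embedding $H^{s_1}_p(\R_+\times\B)\hookrightarrow H^{s_0}_q(\R_+\times\B)$ is only continuous and never compact on the non-compact cylinder, so all of the compactness must be manufactured by the exponential gain from $\delta>0$. I would argue by a cut-off-and-tail scheme. Given a bounded sequence $(v_j)$ in $H^{s_1}_p(\R_+\times\B)$, decompose the cylinder into unit cells $[k,k+1]\times\B$, $k\in\Nz$. On any finite union $[0,R]\times\B$ the classical Rellich--Kondrachov theorem extracts an $H^{s_0}_q$-convergent subsequence, and multiplication by the fixed smooth bounded factor $e^{-\delta t}$ preserves this convergence. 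On each tail cell the translation-invariant local Sobolev embedding together with $\|e^{-\delta t}\|_{L_\infty([k,k+1]\times\B)}\le e^{-\delta k}$ gives $\|M_\delta v_j\|_{H^{s_0}_q([k,k+1]\times\B)}\le Ce^{-\delta k}\|v_j\|_{H^{s_1}_p([k,k+1]\times\B)}$; summing over $k\ge R$ and using the summability of $e^{-\delta k}$ (with a Hölder step to pass between the exponents $p$ and $q$) yields $\|M_\delta v_j\|_{H^{s_0}_q([R,\infty)\times\B)}\le C'e^{-cR}$ uniformly in $j$. A diagonal extraction over $R=1,2,\dots$ then produces a subsequence along which $(M_\delta v_j)$ is Cauchy in $H^{s_0}_q(\R_+\times\B)$, proving $M_\delta$ compact. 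Combining this with the away-from-tip piece completes the proof.
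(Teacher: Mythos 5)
Your proposal is correct and follows essentially the same route as the paper: localize to the collar (handling the exterior piece by classical Rellich--Kondrachov), conjugate by the isometries $S_{\gamma_1}$ and $S_{\gamma_0}^{-1}$ so that the weight gap $\gamma_1-\gamma_0>0$ becomes the exponential factor $e^{-(\gamma_1-\gamma_0)t}$ on the half-cylinder, and then combine compactness on finite cylinders with uniformly small tails and a diagonal extraction. Packaging the near-tip step as compactness of the multiplication operator $M_\delta$ is only a cosmetic reformulation of the paper's sequence-level argument.
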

\begin{proof}
Take $\gamma_1>\gamma>  \gamma_0$ and assume that $(u_k)_k\subset \cH^{s_1,\gamma_1}_p(\M)$ with
$$
\|u_k\|_{\cH^{s_1,\gamma_1}_p} \leq M.
$$
Since the Rellich-Kondrachov theorem holds for compact manifolds with smooth boundary, by using  a partition of unity, we may assume that $(u_k)_k$ are supported in $I\times \B$. 

Lemma~\ref{S2.1: lem-isometry} immediately implies that $\|S_{\gamma_1} (u_k)\|_{s_1,p}\leq M$. Put $\B_i=[0,i]\times\B$. It is understood that $\B_0=\emptyset$. 
$\B_i$ are compact manifolds with smooth boundaries.
Then for each $i$, there exists a subsequence, still denoted by $(u_k)_k$, such that
$$
S_{\gamma_1}( u_k)|_{\B_i} \to v_i \quad \text{in}\quad H^{s_0}_q (\B_i).
$$
Moreover, for $j>i$, we have $v_j|_{\B_i}=v_i$. Hence we obtain a function $v$ on $\R_+\times \B$ such that $v|_{\B_i}=v_i$. 
Pick  $s_1 \leq k\in \bN$.
By the standard point-wise multiplication theorem, we can find a constant $M_1$ independent of $i$ such that
$$
\| w_1 w_2 \|_{H^{s_0}_q (\B_i\setminus \B_{i-1})} \leq M_1 \|w_1\|_{BC^k(\B_i\setminus \B_{i-1})} \|w_2\|_{H^{s_0}_q (\B_i\setminus \B_{i-1})} 
$$
for all $w_1 \in BC^k(\B_i\setminus \B_{i-1})$ and $w_2\in H^{s_0}_q (\B_i\setminus \B_{i-1})$,
where the space $BC^k(\B_i\setminus \B_{i-1})$ consists of all functions $w\in C^k(\B_i\setminus \B_{i-1})$ such that 
$$
\|w\|_{BC^k(\B_i\setminus \B_{i-1})}:=\sum\limits_{j+l\leq k} \|\partial_t^j \nabla_h^l w\|_\infty<\infty.
$$
Here $\nabla_h$ is the  covariant derivative with respect to $h$.
Let $M_2:=\sum\limits_{j=0}^k (\gamma_1-\gamma_0)^j$.
One can then compute that
\begin{align*}
\|e^{(\gamma -\gamma_1)t} v(t,\cdot)\|_{H^{s_0}_q (\R_+ \times \B)} &= \sum\limits_{i=1}^\infty \|e^{(\gamma -\gamma_1)t} v(t,\cdot)\|_{H^{s_0}_q (\B_i\setminus \B_{i-1})}\\
& \leq M_1  \sum\limits_{i=1}^\infty \| e^{(\gamma -\gamma_1)t}\|_{BC^k (\B_i\setminus \B_{i-1})} \| v(t,\cdot)\|_{H^{s_0}_q (\B_i\setminus \B_{i-1})}\\
& \leq M_1 M_2 \sum\limits_{i=1}^\infty   e^{(\gamma -\gamma_1)(i-1)}   \| v(t,\cdot)\|_{H^{s_0}_q (\B_i\setminus \B_{i-1})}\\
& \leq   M   M_1 M_2   \sum\limits_{i=1}^\infty e^{(\gamma -\gamma_1)(i-1)}  =: M^\prime.
\end{align*}
Hence $e^{(\gamma -\gamma_1)t} v(t,\cdot)\in H^{s_0}_q(\R_+\times \B)  $.
For any $\varepsilon>0$, take $i$ large enough such that  
$$
M_1 M_2 e^{(\gamma_0-\gamma )i} (M+M^\prime)<\varepsilon/2.
$$
Then letting $\B_i^c=(\R_+\times \B ) \setminus \B_i$, we have
\begin{align*}
&\|S_{\gamma_0}(u_k) - e^{(\gamma_0-\gamma_1)t}v(t,\cdot)\|_{H^{s_0}_q (\R_+\times \B)}\\
 \leq & \|S_{\gamma_0}(u_k) - e^{(\gamma_0-\gamma_1)t}v(t,\cdot)\|_{H^{s_0}_q(\B_i)}
 + \|S_{\gamma_0}(u_k) - e^{(\gamma_0-\gamma_1)t}v(t,\cdot)\|_{H^{s_0}_q(\B_i^c)}\\
 \leq & \| e^{(\gamma_0-\gamma_1)t} (S_{\gamma_1}(u_k)- v_i)\|_{H^{s_0}_q(\B_i)} \\
 &+ M_1 \sum\limits_{  j=i+1 }^\infty \| e^{(\gamma_0-\gamma)t}\|_{BC^k(\B_j \setminus \B_{j-1})} \| S_\gamma(u_k)-e^{(\gamma-\gamma_1)t} v(t,\cdot)\|_{H^{s_0}_q (\B_j\setminus \B_{j-1})}\\
 \leq & \| e^{(\gamma_0-\gamma_1)t} (S_{\gamma_1}(u_k)- v_i)\|_{H^{s_0}_q(\B_i)} +\varepsilon/2 \leq \varepsilon
\end{align*}
for $k$  large enough. This implies that
$$
S_{\gamma_0}(u_k) \to e^{(\gamma_0-\gamma_1)t}v(t,\cdot) \quad \text{in} \quad H^{s_0}_q(\R_+\times \B).
$$
Now it follows from Lemma~\ref{S2.1: lem-isometry} that
$$
\| u_k - S_{\gamma_0}^{-1}(e^{(\gamma_0-\gamma_1)t}v(t,\cdot)) \|_{\cH^{s_0,\gamma_0}_q} \to 0 \quad \text{as}\quad k\to \infty.
$$
This establishes the compact embedding assertion.
\end{proof}


For $0<\theta <1$ and $1\leq q \leq \infty$, we denote by   $(\cdot,\cdot)_{\theta,q}$ the real interpolation method, cf. \cite[Example I.2.4.1]{Ama95}.


\begin{definition}
Recall that $\B=\sqcup_{i=1}^{k_\B} \B_i$.
Denote by $\bC_\psi$ the space of all  smooth  functions $c$ that vanish on $\M\setminus(I\times\B)$ and on each component $I\times\B_i$, $i\in\{1,...,k_\B\}$, they are of the form $c_i\psi$, where $c_i \in \bC$, i.e. $\bC_\psi$ consists of smooth functions that are locally constant close to the boundary. Endow $\bC_\psi$ with the norm $\|\cdot\|_{\bC_\psi}$ given by $c\mapsto \|c\|_{\bC_\psi}:=(\sum_{i=1}^{k_\B}|c_i|^2)^{\frac{1}{2}}$. 
\end{definition}

\begin{lem}
\label{S2.1: Sobolev-interpolation}
Suppose that $0\leq s_0<s_1$ and $\gamma_0,\gamma_1\in\R$.
For any $ \varepsilon>0$ and $0<\theta<1$,
$$
\cH^{s+\varepsilon,\gamma+\varepsilon}_p(\M) \hookrightarrow  (\cH^{s_0,\gamma_0}_p(\M), \cH^{s_1,\gamma_1}_p(\M) )_{\theta,p}   \hookrightarrow \cH^{s-\varepsilon,\gamma-\varepsilon}_p(\M) ,
$$
where  $s=(1-\theta)s_0+\theta s_1$ and $\gamma=(1-\theta)\gamma_0 +\theta \gamma_1$, and for any $s\geq 0$
\begin{align*}
\cH^{s+2\theta+\varepsilon,\gamma+2\theta+\varepsilon}_p(\M)+ \bC_\psi &\hookrightarrow  (\cH^{s,\gamma}_p(\M), \cH^{s+2,\gamma+2}_p(\M)+\bC_\psi)_{\theta,p}\\
   &\hookrightarrow \cH^{s+2\theta-\varepsilon,\gamma+2\theta-\varepsilon}_p(\M)+\bC_\psi.
\end{align*} 
\end{lem}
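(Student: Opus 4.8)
The plan is to reduce both statements to the interpolation of weighted Bessel‑potential spaces on the model cylinder $\R_+\times\B$, where the abstract machinery is available, and to let the $\varepsilon$‑slack absorb the discrepancy between the Bessel‑potential and Besov scales. First I would localize: writing $u=\omega u+(1-\omega)u$, the second piece is supported away from the conical end on the compact manifold‑with‑boundary $\M\setminus(I\times\B)$, where the weight $\gamma$ plays no role and the asserted chain is just the classical sandwich $H^{s+\varepsilon}_p\hookrightarrow(H^{s_0}_p,H^{s_1}_p)_{\theta,p}\doteq B^s_{pp}\hookrightarrow H^{s-\varepsilon}_p$ for ordinary Sobolev spaces. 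So it suffices to treat functions supported in $I\times\B$. There I would use the isomorphism $S_{\gamma_1}$ of Lemma~\ref{S2.1: lem-isometry} as a single reference transform: since $S_\gamma u=e^{(\gamma-\gamma_1)t}S_{\gamma_1}u$, the map $S_{\gamma_1}$ identifies $\cH^{s,\gamma}_p$ isometrically with the weighted space $\{v:e^{(\gamma-\gamma_1)t}v\in H^s_p(\R_+\times\B)\}$. As real interpolation is a functor, the couple $(\cH^{s_0,\gamma_0}_p,\cH^{s_1,\gamma_1}_p)$ is carried to the couple $(H^{s_0}_p(w_{\gamma_0}),H^{s_1}_p(w_{\gamma_1}))$ of weighted Bessel‑potential spaces, with the admissible exponential weights $w_\gamma:=e^{(\gamma-\gamma_1)t}$, whose logarithmic derivatives are bounded.

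For the first statement I would invoke the standard interpolation theorem for weighted Bessel‑potential spaces with admissible weights, which gives $(H^{s_0}_p(w_{\gamma_0}),H^{s_1}_p(w_{\gamma_1}))_{\theta,p}\doteq B^s_{pp}(w_{\gamma_0}^{1-\theta}w_{\gamma_1}^{\theta})$ with $s=(1-\theta)s_0+\theta s_1$. The crucial bookkeeping is that $w_{\gamma_0}^{1-\theta}w_{\gamma_1}^{\theta}=e^{(1-\theta)(\gamma_0-\gamma_1)t}=w_\gamma$ with $\gamma=(1-\theta)\gamma_0+\theta\gamma_1$, so the interpolated weight is exactly the one belonging to the target weight $\gamma$. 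Transporting back by $S_{\gamma_1}^{-1}$ realizes the interpolation space as a weighted ``Mellin–Besov'' space at weight $\gamma$ and smoothness $s$. The two required inclusions then follow from elementary monotone embeddings: on one side $\cH^{s+\varepsilon,\gamma+\varepsilon}_p\hookrightarrow\cH^{s+\varepsilon,\gamma}_p\cong H^{s+\varepsilon}_p(w_\gamma)\hookrightarrow B^s_{pp}(w_\gamma)$, using that a larger weight yields a smaller Mellin‑Sobolev space together with the order embedding $H^{s+\varepsilon}_p\hookrightarrow B^s_{pp}$ at the fixed weight $w_\gamma$; on the other side $B^s_{pp}(w_\gamma)\hookrightarrow H^{s-\varepsilon}_p(w_\gamma)\cong\cH^{s-\varepsilon,\gamma}_p\hookrightarrow\cH^{s-\varepsilon,\gamma-\varepsilon}_p$. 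Thus the $\varepsilon$‑slack in $s$ absorbs the Bessel‑versus‑Besov gap, while the slack in $\gamma$ is merely free weight monotonicity.

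For the second statement I would peel off the finite‑dimensional space $\C_\omega$. Its generators $\omega c_i$ are smooth and, by a direct computation of the defining norm, lie in every $\cH^{s',\gamma'}_p$ with $\gamma'<\tfrac{n+1}{2}$. Using stability of real interpolation under adjoining a fixed complemented finite‑dimensional subspace to one endpoint, I would identify $(\cH^{s,\gamma}_p,\cH^{s+2,\gamma+2}_p+\C_\omega)_{\theta,p}$ with $(\cH^{s,\gamma}_p,\cH^{s+2,\gamma+2}_p)_{\theta,p}+\C_\omega$; the first statement applied to the couple $(\cH^{s,\gamma}_p,\cH^{s+2,\gamma+2}_p)$ (so $s_0=s$, $s_1=s+2$, $\gamma_0=\gamma$, $\gamma_1=\gamma+2$) produces the intermediate indices $s+2\theta$ and $\gamma+2\theta$, and reinstating the $\C_\omega$ summand in every space yields exactly the claimed chain. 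The main obstacle, and the step requiring the most care, is precisely this handling of $\C_\omega$: the reduction to a direct sum is transparent only when $\C_\omega$ actually sits inside the left endpoint $\cH^{s,\gamma}_p$, i.e. when $\gamma<\tfrac{n+1}{2}$, since otherwise the locally constant directions belong to the right endpoint but not the left, and their contribution to the $K$‑functional grows linearly; one must therefore track how the constant asymptotics enter the intermediate space relative to the threshold $\tfrac{n+1}{2}$. A secondary technical point is verifying that the weights $w_\gamma$ remain admissible up to $t=0$ (equivalently, controlling the transition region where $\omega$ is cut off), which is what makes the weighted interpolation theorem applicable after localization.
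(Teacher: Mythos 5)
The paper does not prove this lemma from scratch: it cites \cite[Lemma~5.4]{CorSchSei02} and \cite[Lemmas~3.6 and 5.2]{RoiSch15}, so any self-contained argument has to reproduce what those lemmas actually do. Your proposal has two genuine gaps. The first and more serious one is the step
$\bigl(H^{s_0}_p(w_{\gamma_0}),H^{s_1}_p(w_{\gamma_1})\bigr)_{\theta,p}\doteq B^s_{pp}\bigl(w_{\gamma_0}^{1-\theta}w_{\gamma_1}^{\theta}\bigr)$
for \emph{two different} weights. The standard interpolation theorems for Bessel-potential spaces with admissible weights (Schmeisser--Triebel type) keep the weight fixed and vary only the smoothness; the Stein--Weiss theorem varies only the weight and keeps $s_0=s_1=0$. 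When both the smoothness and the weight change simultaneously, the two ``directions'' are generated by non-commuting operators ($(1-\Delta)^{1/2}$ versus multiplication by $e^{bt}$), which is exactly the situation where the \emph{complex} method yields a clean identity but the real method does not. The route the cited references take is to first establish $[\cH^{s_0,\gamma_0}_p,\cH^{s_1,\gamma_1}_p]_\theta=\cH^{s,\gamma}_p$ by complex interpolation and then pass to the real method via the nesting $(X_0,X_1)_{\theta',1}\hookrightarrow[X_0,X_1]_{\theta'}\hookrightarrow(X_0,X_1)_{\theta',\infty}$ together with reiteration, shifting $\theta$ by a small amount; this is precisely why the statement loses $\varepsilon$ in $s$ \emph{and} in $\gamma$ simultaneously. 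Your reading of the $\gamma$-slack as ``merely free weight monotonicity'' presupposes the clean real-interpolation identity, which you have not established and which I do not believe is available off the shelf.

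The second gap is one you flag yourself but do not close: the identification of $(\cH^{s,\gamma}_p,\cH^{s+2,\gamma+2}_p+\C_\omega)_{\theta,p}$ with $(\cH^{s,\gamma}_p,\cH^{s+2,\gamma+2}_p)_{\theta,p}+\C_\omega$. There is no general principle that real interpolation commutes with adjoining a finite-dimensional subspace to \emph{one} endpoint; when $\C_\omega\not\subset\cH^{s,\gamma}_p$ (i.e.\ when $\gamma\geq\frac{n+1}{2}$, which the lemma does not exclude) the locally constant directions contribute nontrivially to the $K$-functional, and even when $\C_\omega\subset\cH^{s,\gamma}_p$ one must still estimate $K(t,c)$ for $c\in\C_\omega$ to see where it lands in the intermediate space. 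Working this out is the actual content of \cite[Lemma~5.2]{RoiSch15}, so leaving it as ``the step requiring the most care'' leaves the second chain of embeddings unproved. The localization away from the cone tip and the computation showing $\omega c_i\in\cH^{s',\gamma'}_p$ exactly for $\gamma'<\frac{n+1}{2}$ are correct.
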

\begin{proof}
The first embeddings follow from \cite[Lemma~5.4]{CorSchSei02} and \cite[Lemma~3.6]{RoiSch15}. The second embeddings  were proved in \cite[Lemma~5.2]{RoiSch15}. 
\end{proof}


\section{The Laplace-Beltrami operator on conical manifolds}\label{Section 3}

\subsection{Closed extensions of $\Delta_g$}\label{Section 3.1}



The Laplace-Beltrami operator $\Delta_g$ induced by the conical metric $g$ is a second order differential operator, which near the conical singularities, i.e. inside $(0,1]\times \B$, can be written as
$$
\Delta_g = \frac{1}{x^2}  [ (x\partial_x)^2 +(n-1)(x\partial_x)  + \Delta_h ],
$$
where $\Delta_h$ is the Laplace-Beltrami operator on $\B$ with respect to $h$.


The conormal symbol of $\Delta_g$ is defined by
$$
\sigma_\M(\Delta_g)(z):=z^2 - (n-1)z+\Delta_\B,\quad z\in \bC.
$$
In particular, $\sigma_\M(\Delta_g)\in \cA(\bC, \cL(H^{s+2}_p(\B), H^s_p(\B)))$, where $\cA(\bC,E)$  stands for the space  of analytic $E$-valued functions on $\bC$ for any Banach space $E$.


If we consider $\Delta_g$ as an unbounded operator  in  $\cH^{s,\gamma}_p(\M)$ with domain $C^\infty_c(\M)$,  denote its closure by $\underline{\Delta}_{\min}=\underline{\Delta}_{s,\min}^\gamma$, and its maximal closed extension by $\underline{\Delta}_{\max}=\underline{\Delta}_{s,\max}^\gamma$, where
$$
D(\underline{\Delta}_{\max})=\{u\in \cH^{s,\gamma}_p(\M): \Delta_g u\in \cH^{s,\gamma}_p(\M)\}.
$$
We have
\begin{align}
\label{S3.1: min domain}
\notag D (\underline{\Delta}_{\min})&= D(\underline{\Delta}_{\max}) \cap \bigcap_{\varepsilon>0} \cH^{s+2,\gamma+2-\varepsilon}_p(\M)\\
&=\{u\in \bigcap_{\varepsilon>0} \cH^{s+2,\gamma+2-\varepsilon}_p(\M): \Delta_g u\in \cH^{s,\gamma}_p(\M)\}.
\end{align}
In particular, $D (\underline{\Delta}_{\min})=\cH^{s+2,\gamma+2}_p(\M)$ iff $\sigma_\M(\Delta_g)(z)$ is invertible  on the line 
$$
\displaystyle \{ z\in \bC: {\rm Re} z=\frac{n-3}{2}-\gamma\}.
$$
The reader may refer to \cite[Proposition~5.1]{SchSei05} for the details of this result; 
note that
the structure of the minimal domain in general stratified space is a highly non-trivial
issue, cf. \cite{Albin, Hartmann}.

We denote by $0=\lambda_0>\lambda_1>\cdots$ the distinct eigenvalues of $\Delta_\B$ and by $E_0, E_1, \cdots$ the corresponding eigenspaces.
Then the non-bijectivity points of $\sigma_\M(\Delta_g)$ are exactly
\begin{equation}
\label{S3: q_j}
q^{\pm}_j= \frac{n-1}{2} \pm \sqrt{(\frac{n-1}{2})^2 -\lambda_j},\quad j\in\Nz.
\end{equation}
When  $\displaystyle q^{\pm}_j\neq \frac{n+1}{2}-2- \gamma $ for all $j\in \Nz$,
the minimum domain of $\Delta_g$ in $\cH^{s,\gamma}_p(\M)$ is
\begin{equation}
\label{S3: min-domain}
D (\underline{\Delta}_{\min})=\cH^{s+2,2+\gamma}_p(\M).
\end{equation}
For $q^{\pm}_j$ with $j\neq 0$, we define the spaces
\begin{equation}
\label{S4: E_{qj}-1}
\cE_{q^{\pm}_j}=\psi x^{-q^{\pm}_j}\otimes E_j = \{\psi(x)x^{-q^{\pm}_j} e_j(y): e_j \in E_j \}.
\end{equation}
When $j=0$, put
\begin{equation}
\label{S4: E_{qj}-2}
\cE_{q^{\pm}_0}=
\begin{cases}
\psi x^{-q^{\pm}_0}\otimes E_0   \qquad &n>1;\\
\psi\otimes E_0 + \psi \log x\otimes E_0  &n=1.
\end{cases}
\end{equation}
We will also introduce the set $I_\gamma$ defined by
\begin{equation}
\label{S3.1: I_gamma}
I_\gamma:= (\frac{n+1}{2}-\gamma-2, \frac{n+1}{2}-\gamma).
\end{equation}
As a conclusion from \cite[Theorem~3.6]{GilKra06} and \cite[Proposition~5.1]{SchSei05},  we have
\begin{equation}
\label{S2.2: max domain}
D (\underline{\Delta}_{\max})=D (\underline{\Delta}_{\min})\oplus \bigoplus_{q^{\pm}_j\in I_\gamma} \cE_{q^{\pm}_j}.
\end{equation}


Let $\displaystyle \gamma_p=\frac{n+1}{2}-\frac{n+1}{p}$ for $1\leq p\leq \infty$. Note that 
\begin{equation}
\label{S3: equiv Lp}
\cH^{0,\gamma_p}_p(\M)=L_p(\M) \quad \text{on }(\M,g).
\end{equation}
For this, we will denote the norm $\|\cdot\|_{\cH^{0,\gamma_p}_p}$ simply by $\|\cdot\|_p$ throughout.
The above results are obtained from the theory of Schulze's cone calculus, see \cite{Schulze91, Schulze94}. For an analysis of stratified spaces in the abstract setting of Dirichlet spaces,
we also refer to \cite{Akuta}.


\subsection{Markovian property of  $\Delta_g$}\label{Section 3.2}

We write $\mathcal{T}\M$ for the $C^\infty(\M)$-module of all smooth sections of $T\M$, 
and
denote by 
$$
|\cdot|_g: \mathcal{T}\M \to C^\infty(\M), \quad a\mapsto \sqrt{(a|a)_g}
$$
the (vector bundle) norm induced by the Riemannian metric $g=(\cdot|\cdot)_g$. 
In addition, we define $\langle \cdot, \cdot \rangle_{\mathcal{T}}: \mathcal{T}\M\times \mathcal{T}\M\to \R$ by
$$
\langle u, v \rangle_{\mathcal{T}} := \int_\M (u | v)_g \, d\mu_g,
$$ 
where $d\mu_g$ is the volume element induced by $g$, and by $\langle \cdot ,\cdot \rangle$ the inner product of $L_2(\M)$.

Let us define a quadratic form associated with $-\Delta_g$ by
$$
\mathfrak{a}(u,v)=\langle \nabla u,  \nabla v \rangle_{\mathcal{T}},
$$
on $\mathscr{D}:=C_c^\infty(\M)$.
Here $\nabla u$ is the gradient vector.
We have
$$
\mathfrak{a}(u,v)= \langle -\Delta_g u, v \rangle, \quad u,v\in \mathscr{D}.
$$
Moreover, $-\Delta_g$ is symmetric on $\mathscr{D}$. By \cite[Theorem~4.14]{Dav80} and \cite[Theorem~1.2.8]{Dav89}, $\mathfrak{a}$ is closable and its closure, still denoted by $\mathfrak{a}$, with domain
\begin{center}
$D(\mathfrak{a})=$ the completion of $\mathscr{D}$ with respect to $\|\cdot\|_{D(\mathfrak{a})}$ in $L_{1,loc}(\M)$,
\end{center}
where $\|u\|_{D(\mathfrak{a})}:= \|u\|_2 + \||\nabla u|_g\|_2 $, is associated with a self-adjoint operator, i.e., the Friedrichs extension of $-\Delta_g$. 
We denote the Friedrichs extension of $\Delta_g$ by $\FD$.  
Interested readers may refer to the Sections~\ref{Section 3.3} and \ref{Section 4.2} for more details of $D(\mathfrak{a})$ and $D(\FD)$.


Since $-\FD$ is non-negative and self-adjoint, from the spectral theory, $\FD$ generates a self-adjoint contraction $C_0$-semigroup  $\{e^{t\FD}\}_{t\geq 0}$ on $L_2(\M)$.

In the sequel, we will establish the Markovian property of the semigroup $\{e^{t\FD}\}_{t\geq 0}$. 
To this end, let us first introduce several concepts on Dirichlet forms and Markovian semigroups.

\begin{definition}
A symmetric quadratic form $\cE$ defined in $L_2(\M)$ with domain $D(\cE)$ is called  Markovian if for each $\varepsilon>0$ there exists $\phi_\varepsilon:\R\to \R$ such that $-\varepsilon\leq \phi_\varepsilon(t) \leq 1+\varepsilon$ for all $t\in\R$ and $\phi_\varepsilon(t)=t$ for $t\in [0,1]$ and
$$
0\leq \phi_\varepsilon(t)-\phi_\varepsilon(s) \leq t-s \quad \text{whenever } t>s,
$$
and 
$$
u\in D(\cE)\Longrightarrow \phi_\varepsilon(u)\in D(\cE),\quad \cE(\phi_\varepsilon(u),\phi_\varepsilon(u))\leq \cE(u,u).
$$
A closed symmetric Markovian quadratic form is called a  Dirichlet form.
\end{definition}

Let $X_\R$ be a real Banach lattice with an order $\leq$. See \cite[Chapter~C-I]{ArenGrohNage86}. The complexification of $X_\R$ is a complex Banach lattice defined by
\begin{equation}
\label{S4.1: Banach lattice}
X:=X_\R \oplus i X_\R. 
\end{equation}
The positive cone of $X_\R$ is defined by
$$X_\R^+:=\{x\in X_\R:\, 0\leq x\}. $$
\begin{definition}
Let $\vartheta\in\R$, and $X$ be a complex Banach lattice defined as in \eqref{S4.1: Banach lattice}.
A semigroup $T(t)$ is called real if 
$$T(t) X_\R \subset X_\R ,\quad t\geq 0.$$ 
We say that $T(t)$ is positive if 
$$T(t) X_\R^+ \subset X_\R^+  ,\quad t\geq 0.$$ 
\end{definition}

\begin{definition}\label{Def: Mark semigroup}
A strongly continuous semigroup $T(t)$ on $L_2(\M)$ is called  Markovian if it is both positive and $L_\infty$-contraction, i.e.  
$$
\|T(t) u\|_\infty \leq \|u\|_\infty,\quad t\geq 0, \quad u\in  L_\infty(\M) \cap L_2(\M).
$$
\end{definition}

\begin{prop}\label{S4: Delta-positivity}
The semigroup $\{e^{t\FD}\}_{t\geq 0}$ is Markovian.
\end{prop}
\begin{proof}
By \cite[Example 1.2.1]{FukOshTak}, $\mathfrak{a}$ with domain $D(\mathfrak{a})$ is Markovian and thus is a Dirichlet form. 
It follows from \cite[Theorem~1.4.1]{FukOshTak} that 
\begin{equation}\label{S3.2: markov semigroup property}
0\leq u \leq 1 \Longrightarrow 0\leq e^{t\FD} u\leq 1.
\end{equation} 
Note that although \cite[Theorem~1.4.1]{FukOshTak} is proved  for  the  quadratic form $\cE(u,v)=\langle (-\FD)^{1/2} u , (-\FD)^{1/2} v \rangle$, one  can follow that proof and check that it also works for $\mathfrak{a}$.

It only remains to show that $\{e^{t\FD}\}_{t\geq 0}$ is positivity-preserving. For any $0\leq u \in L_2(\M)$, take a sequence $(u_n)$ in $L_\infty(\M)$ converging to $u$ in  $L_2(\M)$. Without loss of generality, we can take $u_n\geq 0$. By the above discussion, $e^{t\FD} u_n\geq 0$. If we have $\|(e^{t\FD} u)^- \|_2 \geq C>0$, then
$$
\|e^{t\FD} (u_n-u)\|_2 \geq \|(e^{t\FD} u)^- \|_2 \geq C>0.
$$
A contradiction. Therefore, $\{e^{t\FD}\}_{t\geq 0}$ is  Markovian.
\end{proof}

\begin{remark}
Although \cite[Example~1.2.1, Theorem~1.4.1]{FukOshTak} are stated for real-valued function spaces, we can overcome this restriction by the following procedure. First consider   $\Delta_g : C_c^\infty(\M;\R) \to L_2(\M;\R)$  and prove that its associated quadratic form $\mathfrak{a}_\R$ is Dirichlet. Then denoting the corresponding self-adjoint extension  related to $\mathfrak{a}_\R$ by $\underline{\Delta}_{F,\R}$, it is an easy job to check that
$$
\underline{\Delta}_{F,\R}=\FD|_{D(\underline{\Delta}_{F,\R})} \quad \text{and} \quad 
e^{t \underline{\Delta}_{F,\R}}= e^{t \FD}|_{L_2(\M;\R)},
$$
where $D(\underline{\Delta}_{F,\R})$ is the restriction of $D(\FD)$ on real-valued functions.
Now it follows from the above proof that \eqref{S3.2: markov semigroup property} still holds true.
\end{remark}



\begin{prop}
\label{S4: contraction semigroup-Lp}
$\Delta_g$ generates a contraction $C_0$-semigroup on $L_p(\M)$  for  $1\leq p< \infty$, 
and this semigroup is analytic when $1<p<\infty$. Moreover, for any $\omega>0$, $0\in \rho(\Delta_g-\omega)$ and
\begin{equation}
\label{S4: Lap semigroup est}
\|e^{t(\Delta_g-\omega)} \|_{\cL(L_p(\M))} \leq e^{-\omega t}.
\end{equation}
\end{prop}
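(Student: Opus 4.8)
The plan is to transfer the Markovian property of $\{e^{t\FD}\}_{t\ge 0}$ established in Lemma~\ref{S4: Delta-positivity} from $L_2(\M)$ to the whole scale $L_p(\M)$ by duality and interpolation, to upgrade contractivity to analyticity via a Stein-type argument, and finally to read off the claimed exponential estimate and the spectral consequence. Throughout, the generator of the resulting $L_p$-semigroup is the realization of $\Delta_g$ consistent with $\FD$ (it agrees with $\Delta_g$ on $C^\infty_c(\M)$ and restricts to $\FD$ on $L_2(\M)$); this is the operator later denoted $\underline{\Delta}_{F,p}$.

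First I would establish $L_p$-contractivity. Since $\FD$ is self-adjoint and non-positive, $e^{t\FD}$ is a self-adjoint contraction on $L_2(\M)$. Combining self-adjointness with the $L_\infty$-contraction from Lemma~\ref{S4: Delta-positivity}, duality yields the $L_1$-contraction: for $u\in L_1(\M)\cap L_2(\M)$,
\[
\|e^{t\FD}u\|_1=\sup_{\|v\|_\infty\le 1}|\langle e^{t\FD}u,v\rangle|=\sup_{\|v\|_\infty\le 1}|\langle u,e^{t\FD}v\rangle|\le \|u\|_1,
\]
which extends to all of $L_1(\M)$ by density. The Riesz--Thorin interpolation theorem then gives $\|e^{t\FD}\|_{\L(L_p(\M))}\le 1$ for every $1\le p\le\infty$, and the operators defined on $L_2(\M)\cap L_p(\M)$ extend consistently to contractions on $L_p(\M)$. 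Strong continuity for $1\le p<\infty$ then follows from the uniform contraction bound together with $\mathrm{Vol}(\M)<\infty$: for $1\le p\le 2$ one has $L_2(\M)\hookrightarrow L_p(\M)$, so the $L_2$-strong continuity already gives $L_p$-convergence on the dense set $L_2(\M)$; for $2<p<\infty$ one uses $L_\infty(\M)$ (dense in $L_p(\M)$) and the interpolation inequality $\|w\|_p\le\|w\|_2^{2/p}\|w\|_\infty^{1-2/p}$ applied to $w=e^{t\FD}u-u$, whose $L_\infty$-norm is uniformly bounded and whose $L_2$-norm tends to $0$.

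For analyticity when $1<p<\infty$ I would argue as follows. Because $\FD$ is self-adjoint and $\le 0$, the $L_2$-semigroup extends to a bounded holomorphic semigroup on the right half-plane $\{\Rp z>0\}$ with $\|e^{z\FD}\|_{\L(L_2(\M))}\le 1$. This half-plane analyticity, together with the sub-Markovian property already proved, allows one to invoke Stein's interpolation theorem for analytic semigroups (equivalently, the result on analyticity of symmetric sub-Markovian semigroups of Liskevich--Perelmuter) to conclude that $\{e^{t\FD}\}$ is bounded holomorphic on $L_p(\M)$ for $1<p<\infty$. I expect this to be the main obstacle: the $L_\infty$-contraction is available only for \emph{real} times $t\ge 0$, so one cannot simply interpolate analyticity from the endpoint spaces $L_1$ and $L_\infty$; the finer Stein-type argument, exploiting the $L_2$-holomorphy in the full sector, is required.

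Finally, for the perturbed estimate and the spectral statement, I would use $e^{t(\Delta_g-\omega)}=e^{-\omega t}e^{t\Delta_g}$, so that the $L_p$-contractivity of $e^{t\Delta_g}$ immediately yields
\[
\|e^{t(\Delta_g-\omega)}\|_{\L(L_p(\M))}\le e^{-\omega t},\qquad t\ge 0.
\]
Hence the growth bound of $\Delta_g-\omega$ is at most $-\omega<0$, and by the Laplace-transform representation of the resolvent one obtains $\{\Rp\lambda>-\omega\}\subset\rho(\Delta_g-\omega)$; in particular $0\in\rho(\Delta_g-\omega)$, with $(\omega-\Delta_g)^{-1}=\int_0^\infty e^{t(\Delta_g-\omega)}\,dt$ bounded on $L_p(\M)$. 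This completes the proposed argument.
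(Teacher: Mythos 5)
Your argument is correct and is essentially the paper's own proof written out in full: the paper simply cites the standard extrapolation theory for symmetric Markovian semigroups in Davies \cite{Dav89}, Section~1.4, which is exactly the duality/Riesz--Thorin contraction argument, the density argument for strong continuity, and the Stein-interpolation route to analyticity that you describe. Your treatment of the $\omega$-shift via $e^{t(\Delta_g-\omega)}=e^{-\omega t}e^{t\Delta_g}$ and the Laplace-transform representation of the resolvent likewise matches the paper's appeal to contractivity and the Hille--Yosida theorem.
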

\begin{proof}
The generation of semigroup part follows from a standard argument, cf. 
\cite[Section~1.4]{Dav89}.
The assertion that $0\in \rho(\Delta_g-\omega)$ is a direct consequence of the Hille-Yosida theorem.
\eqref{S4: Lap semigroup est} follows from the fact that $\{e^{t\Delta_g}\}_{t\geq 0}$ is a contraction.
\end{proof}

Throughout the rest of this paper, we will denote by  $\underline{\Delta}_{F,p}$ the infinitesimal generator of the semigroup  obtained via the extension of $\{e^{t\FD} \}_{t\geq 0}$ onto  $L_p(\M)$ in Proposition~\ref{S4: contraction semigroup-Lp} and its domain by $D(\underline{\Delta}_{F,p})$. In particular, $\underline{\Delta}_{F,2}=\FD$.
Note that  $D(\underline{\Delta}_{F,p})$ is always  a subspace of $D(\underline{\Delta}_{\max})$, cf. \eqref{S2.2: max domain}.


\subsection{Uniqueness of Markovian extension}\label{Section 3.3}

In this subsection, we will show that $\FD$ is indeed the unique Markovian extension of the unbounded operator $\Delta_g: C_c^\infty(\M)\to L_2(\M)$.

By \cite[Theorem~5.37]{Weid76}, the domain of $\FD$ can be expressed as
$$
D(\FD)= D(\mathfrak{a}) \cap D(\underline{\Delta}_\max),
$$
where $D(\mathfrak{a})$ is the domain of the quadratic form $\mathfrak{a}$ defined in Section~\ref{Section 3.2}.

A closed extension $A$ of the unbounded operator  $\Delta_g: C_c^\infty(\M) \to L_2(\M)$  is called a {\em Markovian extension} of $\Delta_g$ if $A$ generates a Markovian semigroup. From \cite[Theorem~1.3.1]{FukOshTak}, we learn that there is a one-one correspondence between Markovian semigroups and Dirichlet forms. Therefore, each  Markovian extension $A$ of  $\Delta_g$ is associated to a Dirichlet form $\cE_A$ defined on $L_2(\M)$.

Denote by $\cM_\Delta$ the set of all Markovian extensions of $\Delta_g$. We can define a partial order in $\cM_\Delta$   by
\begin{center}
$A \preceq B$ iff $D(\cE_A) \subseteq D(\cE_B)$ and $\cE_A(u,u) \geq \cE_B(u,u)$ for all $u\in D(\cE_A)$
\end{center}
for two Markovian extensions $A,B$ of $\Delta_g$. 
Here $\cE_A$ and $\cE_B$ are the associated Dirichlet forms to $A,B$, respectively, and $D(\cE_A)$ and $D(\cE_B)$ are their domains, respectively. 
From \cite[Lemma~3.3.1]{FukOshTak}, we know that $\FD$ is the minimal Markovian extension with respect to the order $\preceq$.

Recall that $\|v\|_{D(\mathfrak{a})}= \|v\|_2 + \||\nabla v|_g\|_2 $.  
\begin{lem}\label{A2: lem domain}
Define
$$
H^1(\M)=\{u\in L_2(\M):\, \| |\nabla u|_g\|_2<\infty \}.
$$
Then $H^1(\M)$ is the completion of $H^1(\M)\cap C^\infty(\M)$ with respect to $\|\cdot\|_{D(\mathfrak{a})}$  and thus is a Banach space.
\end{lem}
\begin{proof}
The proof is exactly the same to that of \cite[Lemma~3.3.3]{FukOshTak}.
\end{proof}
 
\begin{lem}\label{A2: D(a) char}
For any $ \gamma \neq  \frac{n-1}{2}  $,  
$$
\|u\|_{X^\gamma_p}:= \|  u \|_{\cH^{0,\gamma}_p } + \||\nabla   u |_g \|_{\cH^{0,\gamma}_p} 
$$
defines an equivalent norm on $\cH^{1,\gamma+1}_p(\M)$.
In particular, we have
$D(\mathfrak{a}) \doteq \cH^{1,1}_2(\M)$ for $n\neq 1$.
\end{lem}
\begin{proof}
It suffices to consider this problem inside $I\times \B$ and thus we only consider $u\in \cH^{1,\gamma+1}_p(\M)$ vanishing outside $I\times \B$. 
For this, in the rest of this proof, we are free to assume all functions vanish outside $I\times \B$, where $I=(0,1]$.  Thus, we can omit the multiplier $\psi$ in the definition of $\|\cdot\|_{X^\gamma_p}$.

By the weighted Hardy inequality cf. \cite{HLP34}, we have for $p>1$ and $a\neq 1$
\begin{equation}
\label{A2: WHI}
\int_0^\infty |f|^p x^{-a}\, dx \leq (\frac{p}{|a-1|})^p \int_0^\infty |f^\prime(x)|^p x^{p-a}\, dx.
\end{equation}
We will show that an analogue of \eqref{A2: WHI} holds true for any $  f \in C^\infty_c(\M) $. Indeed,  we have
\begin{align*}
\int_{I\times \B} x^{-a} | f |^p\, d\mu_g  = & \int_\B \int_{(0,\infty)  }  x^{n-a} |   f |^p \, dxdy \\
\leq & (\frac{p}{|a-1-n|})^p \int_\B \int_{(0,\infty)  } x^{p+n-a } | \partial_x   f (x,y)  ) |^p \, dxdy \\
= & (\frac{p}{|a-1-n|})^p \int_{I\times \B} x^{p-a } | \partial_x   f (x,y)  ) |^p \, d\mu_g.
\end{align*}
We infer that
\begin{equation}
\label{A2: WHI-2}
\int_{I\times \B} |f|^p x^{-a}\, d\mu_g \leq (\frac{p}{|a-1-n|})^p\int_{I\times \B} |\nabla f |_g^p x^{p-a}\, d\mu_g 
\end{equation}
for all $  f \in C^\infty_c(\M) $ and $a\neq n+1$. Here we have abused the notation a little and omitted the pull-back of $f$ into the local coordinates of $\B$.
We will use this convention in the sequel. 

Recall that $f\in \cH^{1,\gamma+1}_p(\M)$ iff  $f\in H^1_{p,loc}(\M)$   such that 
$$
\|f\|_{\cH^{1,\gamma+ 1}_p}^p = \sum_{k=0}^1 \int_{I\times\B} x^{p(k + \frac{n-1}{2} -\gamma -\frac{n+1}{p}) } |\nabla^k f|_g^p \, d\mu_g < \infty.
$$
Given any $ f \in C^\infty_c(\M)$, it is clear that
$$
\|f\|_{X^\gamma_p} \leq \|f\|_{\cH^{1,\gamma+1 }_p} .
$$
Choosing $a=p(  \gamma + \frac{n+1}{p} - \frac{n-1}{2})$ in \eqref{A2: WHI-2}, we can infer that
$$
\|f\|_{\cH^{1,\gamma+1}_p} \leq C(n) \|f\|_{X^\gamma_p}  
$$
for $\gamma \neq  \frac{n-1}{2}$.
It follows from Lemma~\ref{S2.1: lem-isometry} that $u\in \cH^{1,\gamma+1}_p(\M) $ iff $S_{\gamma+1} u \in H^1_p(\R_+\times \B)$. 
On the infinite cylinder $\R_+\times \B$, functions in $H^1_p(\R_+\times \B)$ can be approximated by $C^\infty_c$-functions. Therefore, the closure of $C_c^\infty(\M)$ with respect to $\|\cdot\|_{\cH^{1,\gamma+1}_p}$ is exactly $\cH^{1,\gamma+1}_p(\M)$. This proves the asserted statement.
\end{proof}

We introduce a new closed symmetric quadratic form
$$
\cE_+(u,v)= \langle \nabla u, \nabla v \rangle_{\mathcal{T}}
$$
with domain $H^1(\M)$, and denote the associated Markovian extension by $A_+$.
One can follow the proof of \cite[Theorem~3.3.1]{FukOshTak} and show that $A_+$ is the maximal element of  $\cM_\Delta$.

\begin{prop}\label{Prop: unique Markov}
$H^1(\M)=D(\mathfrak{a})$ and consequently, $\underline{\Delta}_F$ is the unique Markovian extension of $\Delta_g$.
\end{prop}
\begin{proof}
As in the proof of Lemma~\ref{A2: D(a) char}, we assume all functions vanish outside $I\times \B$. 

{\bf Case 1:} $n=1$ 

This result is proved in \cite[Proposition~3.10]{BosPra16} for $\B=S^1$; and it is  known that any one-dimensional connected closed manifold is homeomorphic to $S^1$.

{\bf Case 2:} $n>1$
 
It is not hard to show that $H^1(\M)$ is the completion of $H^1(\M)\cap L_\infty(\M)$. Indeed, given any $k\in \bN$ and $u\in H^1(\M)$, we can find $\lambda_k>0$ such that
$$
\|u|_{\{|u| > \lambda_k\}} \|_{D(\mathfrak{a})} <1/k.
$$
Define 
\begin{align*}
u_k (\mathsf{p}) =
\begin{cases}
\lambda_k \quad & \text{when } u(\mathsf{p}) >\lambda_k \\
u(\mathsf{p}) & \text{when } |u(\mathsf{p}) |\leq \lambda_k\\
-\lambda_k \quad & \text{when } u(\mathsf{p}) <-\lambda_k .
\end{cases}
\end{align*}
Then $u_k \to u$ in $H^1(\M)$.

Next we will show that $H^1(\M)\cap L_\infty(\M)\subseteq \cH^{1,1}_2(\M)$. First, any $u\in H^1(\M) $ satisfies 
$$
\int_{I\times \B} |\nabla u|^2 \, d\mu_g <\infty.
$$
On the other hand, if we assume further $u\in   L_\infty(\M)$, it holds that
\begin{align*}
\int_{I\times \B} x^{-2} |u|^2 \, d\mu_g  = \int_\B \int_I x^{n-2} |u|^2 \, dxdy \leq \|u\|^2_\infty {\rm vol}(\B).
\end{align*}
Combining with Lemma~\ref{A2: D(a) char}, this shows that $D(\mathfrak{a})$ is dense in $H^1(\M)$ and thus $D(\mathfrak{a})=H^1(\M)$.
\end{proof}


\subsection{Functional inequalities in Mellin-Sobolev spaces}\label{Section 3.4}


\begin{prop}\label{Prop: Poincare}
For any $ \gamma\neq \frac{n-1}{2}$, there exists a constant $C>0$ such that 
$$
\|u-(u)_{\theta,\gamma}\|_{\cH^{0,\gamma}_p}\leq C \||\nabla u|_g\|_{\cH^{0,\gamma}_p} ,\quad u\in \cH^{1,\gamma+1}_p(\M),
$$
where $\displaystyle (u)_{\theta,\gamma}= \frac{\int_\M \tilde{x}^{\theta-\gamma} u\, d\mu_g}{\int_\M \tilde{x}^{\theta-\gamma}  \, d\mu_g} $  with $\theta>   \frac{n+1}{2} - \frac{n+p}{p}$ and 
\begin{align*}
\tilde{x}(\mathsf{p})=
\begin{cases}
x ,\quad &\mathsf{p}=(x,y) \in I\times \B \\
1 , & \text{elsewhere}.
\end{cases}
\end{align*} 
In particular, we have
$$
\|u-\bar{u}\|_2\leq C \||\nabla u|_g\|_2 ,\quad u\in \cH^{1,1}_2(\M),
$$
where $\bar{u}$ is the mean of $u$ on $\M$.
\end{prop}
\begin{proof}
Assume, to the contrary, that the theorem fails. Then for any $k\in \bN$, there exists $u_k\in \cH^{1,\gamma+1}_p(\M)$ such that
$$
\|u_k-(u_k)_{\theta,\gamma}\|_{\cH^{0,\gamma}_p}\ >k \||\nabla u_k|_g\|_{\cH^{0,\gamma}_p}  .
$$
Let 
$$
v_k:= \frac{u_k-(u_k)_{\theta,\gamma}}{\|u_k-(u_k)_{\theta,\gamma}\|_{\cH^{0,\gamma}_p}}.
$$
Then it holds that $(v_k)_{\theta,\gamma}=0$, $\|v_k\|_{\cH^{0,\gamma}_p}=1$ and 
$$
\||\nabla v_k|_g\|_{\cH^{0,\gamma}_p} < 1/k.
$$
Together with Lemma~\ref{A2: D(a) char}, this implies that $\|v_k\|_{X^\gamma_p}$, and thus $\|v_k\|_{\cH^{1,\gamma+1}_p}$, are uniformly bounded.
It follows from   Proposition~\ref{S2.1: Rellich-thm}  that we can find some $v\in \cH^{1,\gamma+1}_p(\M)$ such that $v_k \to v$ in $\cH^{0,\gamma}_p(\M)$ and $v_k \rightharpoonup v$ in $\cH^{1,\gamma+1}_p(\M)$. 
The condition $\theta>   \frac{n+1}{2} - \frac{n+p}{p}$  guarantees that 
$$
\cH^{0,\gamma}_p(\M) \hookrightarrow \cH^{0,\gamma- \frac{n+1}{2}-\theta}_1(\M) 
$$
due to the fact  $- \frac{n+1}{2}-\theta<0$ and \eqref{Sobolev embedding}. 
Note that
$$
[u \mapsto \int_\M \tilde{x}^{\theta-\gamma} |u| \, d\mu_g]
$$
is an equivalent norm on $\cH^{0,\gamma- \frac{n+1}{2}-\theta}_1(\M)$.
We thus infer that $(v)_{\theta,\gamma}=0$. Moreover, since $\| | \nabla v|_g\|_{\cH^{0,\gamma}_p}=0$, we conclude that $v\equiv 0$. A contradiction.
\end{proof}

Given $p\in [1,n+1)$, let $p^*=\frac{pn+p}{n+1-p}$.
\begin{prop}
Assume that $ \gamma\neq \frac{n-1}{2}$ and $\theta>   \frac{n+1}{2} - \frac{n+p}{p}$ . Then the following inequalities hold.
\begin{itemize}
\item[(i)] {\em (Sobolev-Poincar\'e inequality):} When $p\in [1,n+1)$,
\begin{equation}
\label{S3.4: Sobolev-Poincare}
\|u-(u)_{\theta,\gamma}\|_{\cH^{0,\gamma}_{p^*}} \leq C  \||\nabla u|_g\|_{\cH^{0,\gamma}_p} ,\quad u\in \cH^{1,\gamma+1}_p(\M) .
\end{equation}
\item[(ii)] {\em (Nash inequality):} When $n\geq 2$,
\begin{equation}
\label{S3.4: Nash ineq}
\|u-(u)_{\theta,\gamma}\|_2^{1+\frac{2}{n+1}} \leq C_1 \||\nabla u|_g\|_{\cH^{0,\gamma}_2} \|u-(u)_{\theta,\gamma}\|_{\cH^{0,\gamma}_1}^{\frac{2}{n+1}} ,\quad u\in \cH^{1,\gamma+1}_p(\M).
\end{equation}
\item[(iii)] {\em (Super Poincar\'e inequality):} When $n\geq 2$
\begin{equation}
\label{S3.4: Super Poincare}
\|u-(u)_{\theta,\gamma}\|_2^2 \leq r \|u-(u)_{\theta,\gamma}\|_1^2 + \beta(r) \| |\nabla u|_g \|_2^2,  \quad u\in \cH^{1,\gamma+1}_p(\M),
\end{equation}
where $\beta: (0,\infty)\to (0,\infty)$ is a decreasing function.
\end{itemize}
\end{prop}
\begin{proof}
\eqref{S3.4: Sobolev-Poincare} is a direct consequence of \eqref{Sobolev embedding}, Lemma~\ref{A2: D(a) char} and Proposition~\ref{Prop: Poincare}.
By the H\"older inequality, we have
$$
\|u-(u)_{\theta,\gamma}\|_{\cH^{0,\gamma}_2} \leq \|u-(u)_{\theta,\gamma}\|_{\cH^{0,\gamma}_{2^*}}^\vartheta \|u-(u)_{\theta,\gamma}\|_{\cH^{0,\gamma}_1}^{1-\vartheta},
$$
where $\vartheta=\frac{n+1}{n+3}$ and $2^*=\frac{2n+2}{n-1}$. Taking $p=2$ in \eqref{S3.4: Sobolev-Poincare} gives \eqref{S3.4: Nash ineq}.
Then \eqref{S3.4: Super Poincare} follows from the Young's inequality and \eqref{S3.4: Nash ineq}.
\end{proof}



\section{Fractional Powers of the Laplace-Beltrami Operator}\label{Section 4}


\subsection{Construction of the Fractional Laplace-Beltrami Operators}\label{Section 4.1}


Proposition~\ref{S4: contraction semigroup-Lp} implies that the operator $\underline{\Delta}_{F,p}-\omega$ has a bounded inverse in $L_p(\M)$ when $\omega>0$.  
In this case, 
we can define $(\omega-\underline{\Delta}_{F,p})^\sigma$ by
\begin{align}
\label{S4: fractional Delta def}
(\omega-\underline{\Delta}_{F,p})^\sigma :&=\frac{\sin(\pi\sigma)}{\pi} \int_0^\infty s^{\sigma-1} (\omega - \underline{\Delta}_{F,p})(s+\omega-\underline{\Delta}_{F,p})^{-1} \, ds\\
\label{S4: fractional Delta def 2}
  &= \frac{1}{\Gamma(1-\sigma)}\int_0^\infty s^{-\sigma}(\omega-\underline{\Delta}_{F,p}) e^{s(\underline{\Delta}_{F,p}-\omega)}\, ds
\end{align}
in $D(-\underline{\Delta}_{F,p})$,
cf. \cite[Formula~(2.33)]{Tana}. 
\eqref{S4: fractional Delta def 2} is a direct consequence of \eqref{S4: fractional Delta def} and \cite[Formula~(2.1.10)]{Lunar95}, i.e.
$$
(s+\omega-\underline{\Delta}_{F,p})^{-1}=\int_0^\infty e^{-st} e^{t (\underline{\Delta}_{F,p}-\omega)}\, dt.
$$
In particular, by \cite[Formula~(2.34)]{Tana}  the following resolvent formula  holds.
\begin{equation}
\label{S4: Resovlent fractional lap}
(\lambda + (\omega-\underline{\Delta}_{F,p})^\sigma)^{-1} =\frac{\sin(\pi\sigma)}{\pi} \int_0^\infty \frac{s^\sigma (s+\omega-\underline{\Delta}_{F,p})^{-1}  }{(s^\sigma + \lambda e^{i\pi\sigma})(s^\sigma + \lambda e^{-i\pi\sigma}) }    \, ds, \quad \lambda>0.
\end{equation}
When $\omega=0$, we define the pseudo-resolvent $I(\lambda)$ for $\lambda\in (0,\infty)$ by
$$
I(\lambda)= \frac{\sin(\pi\sigma)}{\pi} \int_0^\infty \frac{s^\sigma (s-\underline{\Delta}_{F,p})^{-1}  }{(s^\sigma + \lambda e^{i\pi\sigma})(s^\sigma + \lambda e^{-i\pi\sigma}) }    \, ds.
$$
Then $(-\underline{\Delta}_{F,p})^\sigma$ is defined as the unique closed densely defined operator such that 
$$
(\lambda + (-\underline{\Delta}_{F,p})^\sigma)^{-1}=I(\lambda),\quad \lambda\in (0,\infty).
$$
Moreover, \eqref{S4: Resovlent fractional lap} still holds for $\omega=0$, see \cite[Formulas~(2.40) and (2.44)]{Tana}.

The domain of $(\omega-\underline{\Delta}_{F,p})^\sigma$ is independent of $\omega\geq 0$, cf. \cite[Lemma~2.3.5]{Tana}. For this, we will simply abbreviate it to $D(( -\underline{\Delta}_{F,p})^\sigma)$.  
Further, \cite[Lemma~2.3.5]{Tana} states that
\begin{equation}\label{S4: control diff}
\| (\omega-\underline{\Delta}_{F,p})^\sigma u - (-\underline{\Delta}_{F,p})^\sigma u\|_p \leq c \omega^\sigma \|u\|_p,\quad u \in D(( -\underline{\Delta}_{F,p})^\sigma).
\end{equation}
Next, we will show that Formula~\eqref{S4: fractional Delta def} actually holds for $\omega=0$. Indeed first note that Proposition~\ref{S4: contraction semigroup-Lp} and \cite[Formula~(2.1.10)]{Lunar95}  imply
\begin{align*}
\| ((\lambda+\omega-\underline{\Delta}_{F,p})^{-1} u\|_p &= \|\int_0^\infty e^{-\lambda t} e^{t ( \underline{\Delta}_{F,p}-\omega)} u\, dt\|_p \\
&\leq\int_0^\infty e^{-\lambda t} \|u\|_p \,dt = \frac{1}{\lambda} \|u\|_p ,
\end{align*}
which gives
\begin{equation}
\label{Lp resolvent est}
\| \lambda(\lambda+\omega-\underline{\Delta}_{F,p})^{-1}\|_{\cL(L_p(\M))}\leq 1, \quad \lambda>0,\, \omega\geq 0.
\end{equation}
This implies that the operator  
$$
B u :=-\frac{\sin(\pi\sigma)}{\pi} \int_0^\infty s^{\sigma-1}  \underline{\Delta}_{F,p}(s -\underline{\Delta}_{F,p})^{-1} u \, ds
$$
is well-defined for all $u\in D(-\underline{\Delta}_{F,p})$ because we can write $Bu$ as
\begin{align*}
Bu= & -\frac{\sin(\pi\sigma)}{\pi} \int_1^\infty s^{\sigma-1}  \underline{\Delta}_{F,p}(s -\underline{\Delta}_{F,p})^{-1} u \, ds \\
& +  \frac{\sin(\pi\sigma)}{\pi} \int_0^1 s^{\sigma-1}  u   \, ds -\frac{\sin(\pi\sigma)}{\pi} \int_0^1 s^\sigma  (s -\underline{\Delta}_{F,p})^{-1} u \, ds
\end{align*}
and all three terms converge absolutely in $L_p(\M)$. \eqref{S4: fractional Delta def} yields
\begin{align*}
& \| (\omega-\underline{\Delta}_{F,p})^\sigma u - B u\|_p \\
\leq & \frac{\sin(\pi\sigma)}{\pi} \omega \| \int_0^\infty s^{\sigma-1} (s+\omega-\underline{\Delta}_{F,p})^{-1} u \, ds \|_p \\
& + \frac{\sin(\pi\sigma)}{\pi}  \| \int_0^\infty s^{\sigma-1} \underline{\Delta}_{F,p} [(s-\underline{\Delta}_{F,p})^{-1} - (s+\omega -\underline{\Delta}_{F,p})^{-1}]  u\, ds \|_p .
\end{align*}
The first term on the right hand side can be estimated as follows. 
\begin{align*}
& \omega \| \int_0^\infty s^{\sigma-1} (s+\omega-\underline{\Delta}_{F,p})^{-1} u \, ds \|_p \\
\leq &  \int_\omega^\infty s^{\sigma-1} \omega \| (s+\omega-\underline{\Delta}_{F,p})^{-1} u\|_p  \, ds  + \int_0^\omega s^{\sigma-1} (s+\omega)\| (s+\omega-\underline{\Delta}_{F,p})^{-1} u\|_p  \, ds \\
& + \int_0^\omega s^\sigma  \| (s+\omega-\underline{\Delta}_{F,p})^{-1} u\|_p  \, ds \\
\leq & c \omega^\sigma \|u\|_p.
\end{align*}
Similarly, 
\begin{align*}
 & \| \int_0^\infty s^{\sigma-1} \underline{\Delta}_{F,p} [(s-\underline{\Delta}_{F,p})^{-1} - (s+\omega -\underline{\Delta}_{F,p})^{-1}]  u\, ds \|_p  \\
 \leq & \int_0^\infty  s^\sigma \omega \| (s-\underline{\Delta}_{F,p})^{-1}   (s+\omega -\underline{\Delta}_{F,p})^{-1} u\|_p \, ds \\
 & +  \int_0^\infty   s^{\sigma-1} \omega    \| (s+\omega -\underline{\Delta}_{F,p})^{-1} u\|_p \, ds \\
 \leq & c \omega^\sigma \|u\|_p.
\end{align*}
To sum up, we have proved
$$
(\omega-\underline{\Delta}_{F,p})^\sigma u \to Bu \quad \text{in } L_p(\M)$$
as $\omega \to 0^+$. In view of \eqref{S4: control diff}, we thus show that $B= ( -\underline{\Delta}_{F,p})^\sigma$ in 
$D(-\underline{\Delta}_{F,p})$. This coincides with Balakrishnan's  formula for the fractional power of a dissipative operator, cf. \cite{Bal60}.


\subsection{Properties of the Fractional Laplace-Beltrami Operator}\label{Section 4.2}

\begin{lem}
\label{S4: Positivity}
For all $\lambda>0$, $\omega \geq 0$, $\sigma\in (0,1)$, and $u\in L_1(\M)$, we have
$$ 
\sup [\id +\lambda (\omega- \underline{\Delta}_{F,1})^\sigma]^{-1}u \leq \max \{0, \sup u\}.
$$
\end{lem}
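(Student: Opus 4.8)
The plan is to reduce the fractional resolvent bound to a maximum principle for the Markovian semigroup $T(t):=e^{t(\underline{\Delta}_{F,1}-\omega)}=e^{-\omega t}e^{t\underline{\Delta}_{F,1}}$, and then to propagate that estimate through the resolvent formula \eqref{S4: Resovlent fractional lap}, exploiting that its integral kernel is positive. First I would dispose of the trivial case: since $\sup$ means essential supremum, if $M:=\max\{0,\sup u\}=+\infty$ there is nothing to prove, so assume $M<\infty$, i.e. $u\le M$ a.e. The first genuine step is a pointwise maximum principle for $T(t)$. Because $\M$ has finite volume, the constant $\mathbf 1\in L_1(\M)\cap L_2(\M)$; the Markovian property \eqref{S3.2: markov semigroup property} together with the consistency of the $L_p$-semigroups from Proposition~\ref{S4: contraction semigroup-Lp} gives $0\le e^{t\underline{\Delta}_{F,1}}\mathbf 1\le \mathbf 1$, whence $T(t)\mathbf 1\le e^{-\omega t}\mathbf 1$. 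Since $M\mathbf 1-u\in L_1(\M)$ is nonnegative, positivity of $T(t)$ (Lemma~\ref{S4: Delta-positivity}) yields $T(t)u\le M\,T(t)\mathbf 1\le M e^{-\omega t}$ a.e.

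Next I would convert this into a resolvent estimate. Using the Laplace representation $(s+\omega-\underline{\Delta}_{F,1})^{-1}=\int_0^\infty e^{-st}T(t)\,dt$, valid on $L_1(\M)$ by Proposition~\ref{S4: contraction semigroup-Lp} and \cite[Formula~(2.1.10)]{Lunar95}, the previous bound integrates to
$$
(s+\omega-\underline{\Delta}_{F,1})^{-1}u\le M\int_0^\infty e^{-(s+\omega) t}\,dt=\frac{M}{s+\omega}\qquad\text{a.e.},\quad s>0.
$$
Now I set $\mu:=1/\lambda$ and write $\id+\lambda(\omega-\underline{\Delta}_{F,1})^\sigma=\lambda\bigl(\mu+(\omega-\underline{\Delta}_{F,1})^\sigma\bigr)$, so that \eqref{S4: Resovlent fractional lap} gives
$$
[\id+\lambda(\omega-\underline{\Delta}_{F,1})^\sigma]^{-1}u=\frac1\lambda\cdot\frac{\sin(\pi\sigma)}{\pi}\int_0^\infty\frac{s^\sigma\,(s+\omega-\underline{\Delta}_{F,1})^{-1}u}{(s^\sigma+\mu e^{i\pi\sigma})(s^\sigma+\mu e^{-i\pi\sigma})}\,ds.
$$
The denominator equals $(s^\sigma+\mu\cos\pi\sigma)^2+\mu^2\sin^2\pi\sigma>0$ for $\sigma\in(0,1)$, so the scalar kernel $K(s):=\frac{\sin(\pi\sigma)}{\pi}\frac{s^\sigma}{(s^\sigma+\mu e^{i\pi\sigma})(s^\sigma+\mu e^{-i\pi\sigma})}$ is strictly positive.

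Feeding the resolvent estimate into this positive integrand, I would obtain
$$
[\id+\lambda(\omega-\underline{\Delta}_{F,1})^\sigma]^{-1}u\le\frac M\lambda\int_0^\infty\frac{K(s)}{s+\omega}\,ds\qquad\text{a.e.}
$$
The remaining scalar integral is exactly the instance of \eqref{S4: Resovlent fractional lap} in which $\omega-\underline{\Delta}_{F,1}$ is replaced by multiplication by the constant $\omega$, and its derivation (a contour/Beta-type computation) applies verbatim, giving $\int_0^\infty\frac{K(s)}{s+\omega}\,ds=(\mu+\omega^\sigma)^{-1}$ for every $\omega\ge0$. Therefore
$$
[\id+\lambda(\omega-\underline{\Delta}_{F,1})^\sigma]^{-1}u\le\frac{M}{\lambda(\mu+\omega^\sigma)}=\frac{M}{1+\lambda\omega^\sigma}\le M=\max\{0,\sup u\}\quad\text{a.e.},
$$
which is the claim (in fact a slightly sharper bound).

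The hard part will not be any single estimate but the passage from the pointwise-in-$t$ and pointwise-in-$s$ a.e. inequalities to a genuine a.e. pointwise bound on the fractional resolvent: one must justify interchanging the essential supremum with the two Bochner integrals. I expect to handle this by invoking the positivity of both the Laplace kernel $e^{-st}$ and of $K(s)$ and by a Fubini argument that selects a common null set on which $[(s+\omega-\underline{\Delta}_{F,1})^{-1}u](x)\le M/(s+\omega)$ holds simultaneously for a.e. $s$, after which the $s$-integration against $K$ is performed for fixed good $x$. The integrability of $K(s)/(s+\omega)$ near $s=0$ (where $K(s)\sim \mathrm{const}\cdot s^\sigma$, so the integrand is $O(s^{\sigma-1})$) covers the boundary case $\omega=0$ as well.
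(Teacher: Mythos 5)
Your proof is correct, and it rests on the same two pillars as the paper's own argument: the Markovian property of the heat semigroup (Lemma~\ref{S4: Delta-positivity}) together with the positivity of the scalar kernel in the resolvent formula \eqref{S4: Resovlent fractional lap}. The organization is genuinely different, though. The paper splits $u=u^+-u^-$, first proves the $L_\infty$-contraction $\|[\id+\lambda(\omega-\underline{\Delta}_{F,1})^\sigma]^{-1}v\|_\infty\le\|v\|_\infty$ (using the bound $\|s(s+\omega-\underline{\Delta}_{F,1})^{-1}v\|_\infty\le\|v\|_\infty$ and the $\omega=0$ normalization of the scalar integral), applies this to $u^+$, and then invokes positivity separately on $-u^-\le 0$. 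You instead propagate the one-sided bound $u\le M$ directly through the positivity-preserving operators, using sub-Markovianity in the form $T(t)\mathbf{1}\le e^{-\omega t}\mathbf{1}$ and the exact scalar identity $\int_0^\infty K(s)(s+\omega)^{-1}\,ds=(\mu+\omega^\sigma)^{-1}$; this avoids the decomposition altogether and yields the sharper constant $(1+\lambda\omega^\sigma)^{-1}$. Two points to tighten: (i) you apply positivity of $T(t)$ to the $L_1$-function $M\mathbf{1}-u$, but Lemma~\ref{S4: Delta-positivity} only gives positivity of the $L_2$-semigroup; you need the standard density argument (approximate a nonnegative $L_1$-function by nonnegative $L_2$-functions and use $L_1$-contractivity) to transfer positivity to $L_1$ --- this is precisely the step the paper carries out inside its proof, so it should be made explicit rather than attributed to that lemma. (ii) The identity $\int_0^\infty K(s)(s+\omega)^{-1}\,ds=(\mu+\omega^\sigma)^{-1}$ deserves a one-line justification, e.g.\ as the scalar instance of \cite[Formula~(2.34)]{Tana} applied to multiplication by $\omega$; for $\omega=0$ it reduces to the normalization already used in the paper, and since $K(s)=O(s^\sigma)$ near $0$ the integral converges in that boundary case as well. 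Your handling of the interchange of the a.e.\ inequalities with the two Bochner integrals is adequate.
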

\begin{proof}
As in the proof of \eqref{Lp resolvent est}, it follows from Proposition~\ref{S4: Delta-positivity} and \cite[Formula~(2.1.10)]{Lunar95}  that for all $u\in L_\infty(\M)$
\begin{align*}
\| (\lambda - \underline{\Delta}_{F,1})^{-1} u\|_\infty  &= \|\int_0^\infty e^{-\lambda t} e^{t \underline{\Delta}_{F,1}} u\, dt\|_\infty \\
&\leq\int_0^\infty e^{-\lambda t} \|u\|_\infty \,dt = \frac{1}{\lambda} \|u\|_\infty ,
\end{align*}
because $ L_\infty(\M)\hookrightarrow  L_1(\M)$ and $e^{t\underline{\Delta}_{F,1}}$ coincides with $e^{t\underline{\Delta}_{F}}$ on $L_\infty(\M)$.
For any $\lambda>0$ and $u\in L_\infty(\M)$, \eqref{S4: Resovlent fractional lap} yields
\begin{align*}
&\| [\lambda + (\omega-\underline{\Delta}_{F,1})^{\sigma}]^{-1} u \|_\infty \\
 \leq  &\frac{\sin(\pi\sigma)}{\pi} \int_0^\infty \frac{s^{\sigma-1}}{(s^\sigma + \lambda e^{i\pi\sigma})(s^\sigma + \lambda e^{-i\pi\sigma}) } \| s (s+\omega-\underline{\Delta}_{F,1})^{-1} u \|_\infty \, ds\\
\leq & \|u\|_\infty \frac{\sin(\pi\sigma)}{\pi} \int_0^\infty \frac{s^{\sigma-1}}{(s^\sigma + \lambda e^{i\pi\sigma})(s^\sigma + \lambda e^{-i\pi\sigma}) }  \, ds
= \frac{1}{\lambda}  \|u\|_\infty .
\end{align*}
Therefore, for all $u\in L_1(\M)$, we have
$$
\| [\id + \lambda(\omega-\underline{\Delta}_{F,1})^{\sigma}]^{-1} u \|_\infty \leq \|u\|_\infty.
$$
Next, we will use Proposition~\ref{S4: Delta-positivity} to establish the positivity of $\{e^{t\underline{\Delta}_{F,1}}\}_{t\geq 0}$  on $L_1(\M)$.
Pick $u\in L_1(\M,\R_+)$ and a sequence $(u_k)_k$ in $L_2(\M)$ converging to $u$ in $L_1(\M)$. Without loss of generality, we may assume $u_k\in L_2(\M,\R_+)$.
If 
$$
\| (e^{t\underline{\Delta}_{F,1}}u)^-\|_1 \geq C>0,
$$
then by the positivity of $\{e^{t\FD}\}_{t\geq 0}$ on $L_2(\M)$ and Proposition~\ref{S4: contraction semigroup-Lp}, we have
$$
\| e^{t\underline{\Delta}_{F,1}}(u-u_k)\|_1 \geq \| (e^{t\underline{\Delta}_{F,1}}u)^-\|_1 \geq C>0.
$$ 
A contradiction. This implies that $\{e^{t\underline{\Delta}_{F,1}}\}_{t\geq 0}$ is positive on $L_1(\M)$.
For any $u\in L_1(\M,\R_+)$, we have
$$
(\lambda-\underline{\Delta}_{F,1})^{-1} u =\int_0^\infty e^{-\lambda t} e^{t\underline{\Delta}_{F,1} }u\, dt \geq 0,\quad \lambda>0.
$$
This gives the positivity of $(\id -  \lambda\underline{\Delta}_{F,1})^{-1}$ on $L_1(\M)$, i.e., for all $\lambda>0$
\begin{equation}
\label{S4: Positivity of resl Delta L1}
(\id-  \lambda\underline{\Delta}_{F,1})^{-1} (L_1(\M,\R_+))\subset L_1(\M,\R_+).
\end{equation}
The positivity of  $[\id +  \lambda(\omega-\underline{\Delta}_{F,1})^\sigma]^{-1}$ on $L_1(\M)$  follows  from \eqref{S4: Resovlent fractional lap} and \eqref{S4: Positivity of resl Delta L1}.

Given any $u\in L_1(\M,\R)$, we decompose it into $u=u^+ - u^-$. 
It holds that
\begin{align*}
&\sup[\id + \lambda(\omega-\underline{\Delta}_{F,1})^\sigma]^{-1} u^+\\
 \leq& \|[\id + \lambda(\omega- \underline{\Delta}_{F,1})^\sigma]^{-1} u^+\|_\infty \leq \|u^+\|_\infty= \sup u^+.
\end{align*}
On the other hand, since $-u^-\leq 0$, it follows from the positivity of $[\id + \lambda(\omega-\underline{\Delta}_{F,1})^\sigma]^{-1} $ on $L_1(\M)$ that
$$
\sup [\id + \lambda(\omega-\underline{\Delta}_{F,1})^\sigma]^{-1} (-u^-)\leq 0.
$$
Combining  these two estimates, the asserted claim then follows.
\end{proof}

\begin{prop}
\label{S4: Lap-frac semigroup}
For $1\leq p< \infty$ and $\omega\geq 0$, $-(\omega-\underline{\Delta}_{F,p})^\sigma$ generates a contraction $C_0$-semigroup on $L_p(\M)$. In particular, when $1<p<\infty$, this semigroup is analytic;
when $p=2$,  this semigroup is Markovian.
Furthermore, $0\in \rho((\omega-\underline{\Delta}_{F,p})^\sigma)$ whenever $\omega>0$.
\end{prop}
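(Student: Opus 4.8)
The plan is to set $A:=\omega-\underline{\Delta}_{F,p}$ and dispatch the four assertions separately, each time reducing to a property already established for the semigroup generated by $\underline{\Delta}_{F,p}$ together with the general theory of fractional powers. For the contraction $C_0$-semigroup statement I would invoke the Hille--Yosida theorem, so it suffices to prove the resolvent bound $\|\lambda(\lambda+A^\sigma)^{-1}\|_{\L(L_p(\M))}\le 1$ for all $\lambda>0$ and $\omega\ge0$. This is the $L_p$-analogue of the $L_\infty$-computation already performed in the proof of Lemma~\ref{S4: Positivity}: starting from the resolvent formula \eqref{S4: Resovlent fractional lap} and the uniform bound $\|(s+A)^{-1}\|_{\L(L_p(\M))}\le 1/s$ coming from \eqref{Lp resolvent est}, one estimates
\begin{align*}
\|\lambda(\lambda+A^\sigma)^{-1}u\|_p
&\le \frac{\sin(\pi\sigma)}{\pi}\,\lambda\int_0^\infty \frac{s^{\sigma}\,\|(s+A)^{-1}u\|_p}{(s^\sigma+\lambda e^{i\pi\sigma})(s^\sigma+\lambda e^{-i\pi\sigma})}\,ds\\
&\le \lambda\|u\|_p\,\frac{\sin(\pi\sigma)}{\pi}\int_0^\infty \frac{s^{\sigma-1}}{(s^\sigma+\lambda e^{i\pi\sigma})(s^\sigma+\lambda e^{-i\pi\sigma})}\,ds
=\|u\|_p ,
\end{align*}
the last integral being exactly the one evaluated to $1/\lambda$ in Lemma~\ref{S4: Positivity} (the denominator is real and positive for $\sigma\in(0,1)$). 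Since $A^\sigma$ is closed and densely defined (its domain contains $D(\underline{\Delta}_{F,p})$ and equals $D((-\underline{\Delta}_{F,p})^\sigma)$), Hille--Yosida then yields the contraction $C_0$-semigroup for every $1\le p<\infty$ and every $\omega\ge0$.

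For analyticity when $1<p<\infty$, and for the invertibility statement, I would argue by sectoriality. By Proposition~\ref{S4: contraction semigroup-Lp} the operator $-A=\underline{\Delta}_{F,p}-\omega$ generates a bounded analytic semigroup, so $A$ is sectorial of some angle $\theta<\pi/2$; the standard theory of fractional powers of sectorial operators (cf.\ \cite{Tana}) shows $A^\sigma$ is again sectorial, of angle $\sigma\theta<\sigma\pi/2<\pi/2$, whence $-A^\sigma$ generates a bounded analytic semigroup on $L_p(\M)$. For $\omega>0$ one has $0\in\rho(A)$ by Proposition~\ref{S4: contraction semigroup-Lp}, and then Balakrishnan's formula $A^{-\sigma}=\frac{\sin(\pi\sigma)}{\pi}\int_0^\infty s^{-\sigma}(s+A)^{-1}\,ds$ converges absolutely in $\L(L_p(\M))$ — near $s=0$ because $(s+A)^{-1}\to A^{-1}$ and $s^{-\sigma}$ is integrable, near $s=\infty$ because $\|(s+A)^{-1}\|\le 1/s$ — and defines a two-sided bounded inverse of $A^\sigma$, so $0\in\rho(A^\sigma)$. (Note that for $\omega=0$ the operator $A=-\underline{\Delta}_{F,p}$ has $0$ in its spectrum, consistent with invertibility being claimed only for $\omega>0$.)

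The Markovian assertion at $p=2$ is where the structure specific to our setting enters, and I expect it to be the \emph{main obstacle}. Here I would use the Bochner subordination representation of the semigroup generated by $-A^\sigma$,
$$
e^{-tA^\sigma}=\int_0^\infty f_{t,\sigma}(s)\,e^{-sA}\,ds,\qquad t\ge0,
$$
where $f_{t,\sigma}\ge0$ is the density of the one-sided $\sigma$-stable subordinator, satisfying $\int_0^\infty f_{t,\sigma}(s)\,ds=1$; this representation is consistent with the definition of $A^\sigma$ through \eqref{S4: fractional Delta def}. Each factor $e^{-sA}=e^{-s\omega}e^{s\FD}$ is Markovian: it is positive and an $L_\infty$-contraction by Lemma~\ref{S4: Delta-positivity} together with $e^{-s\omega}\in[0,1]$. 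Because $f_{t,\sigma}$ is a probability density, positivity passes through the integral and $\|e^{-tA^\sigma}u\|_\infty\le\int_0^\infty f_{t,\sigma}(s)\,\|e^{-sA}u\|_\infty\,ds\le\|u\|_\infty$, so $e^{-tA^\sigma}$ is positive and an $L_\infty$-contraction, i.e.\ Markovian. The delicate points to justify carefully are that the subordinated semigroup is indeed the one generated by the operator $A^\sigma$ of \eqref{S4: fractional Delta def}, and that positivity together with the $L_\infty$-bound genuinely survive the vector-valued subordination integral. The case $\sigma=1$ is not fractional and reduces directly to Proposition~\ref{S4: contraction semigroup-Lp} and Lemma~\ref{S4: Delta-positivity}.
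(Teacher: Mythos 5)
Your proposal is correct, and for the generation, analyticity, and invertibility assertions it follows essentially the same route as the paper: the resolvent bound $\|\lambda(\lambda+(\omega-\underline{\Delta}_{F,p})^\sigma)^{-1}\|_{\L(L_p(\M))}\le 1$ obtained from \eqref{S4: Resovlent fractional lap} and \eqref{Lp resolvent est} (the paper outsources this and the sectoriality of the fractional power to \cite{Tana}, Theorem~2.3.1, but the underlying computation is exactly your integral estimate), followed by Lumer--Phillips/Hille--Yosida and the angle-multiplication property of fractional powers of sectorial operators. The genuine divergence is in the Markovian assertion. You prove it by Bochner subordination through the one-sided $\sigma$-stable density, which is a valid classical argument; its cost is precisely the point you flag as delicate, namely identifying the generator of the subordinated semigroup with the operator $(\omega-\underline{\Delta}_{F,p})^\sigma$ defined via the resolvent formula --- a true but nontrivial theorem (Phillips) that the paper never needs. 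The paper instead exploits what is already in hand: Lemma~\ref{S4: Positivity} shows that the \emph{resolvent} $[\id+\lambda(\omega-\underline{\Delta}_{F,1})^\sigma]^{-1}$ is positive and an $L_\infty$-contraction, and the exponential formula $e^{-tA^\sigma}u=\lim_{n\to\infty}[\id+\tfrac{t}{n}A^\sigma]^{-n}u$ (\cite[Theorem~1.8.3]{Pazy}) transfers both properties directly to the semigroup, since positivity and the $L_\infty$-bound are preserved under powers and strong limits. If you want to avoid importing the subordination machinery, this resolvent-plus-exponential-formula route closes your ``delicate point'' with no new input; conversely, your subordination argument has the advantage of making the probabilistic structure of $e^{-tA^\sigma}$ explicit and of yielding the $L_\infty$-contraction for all $t$ at once rather than through a limit.
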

\begin{proof}
From Proposition~\ref{S4: contraction semigroup-Lp}, \cite[Theorems~1.4.1 and 1.4.2]{Dav89} and the standard semigroup theory, for all $1\leq p<\infty$ and $\omega>0$, we can learn the following facts:
\begin{itemize}
\item $\omega-\underline{\Delta}_{F,p}$ is densely defined and closed.
\item $\R_+\setminus\{0\}  \subset \rho(\underline{\Delta}_{F,p}-\omega)$, and $\| \lambda(\lambda+\omega-\underline{\Delta}_{F,p})^{-1}\|_{\cL(L_p(\M))}\leq 1$  for any $\lambda>0$. 
Particularly, $0\in \rho(\underline{\Delta}_{F,p}-\omega)$ when $\omega>0$.
\item We can find some $\theta_p\in (0,\pi]$ and $M>0$ such that $\Sigma_{\theta_p}\subset \rho(\underline{\Delta}_{F,p}-\omega)$ and 
$$
\| \lambda(\lambda+\omega-\underline{\Delta}_{F,p})^{-1}\|_p \leq M , \quad \lambda\in \Sigma_{\theta_p}.
$$
In particular, $\theta_p>\pi/2$ when $1<p<\infty$.
\end{itemize}
The resolvent estimate $\| \lambda(\lambda+\omega-\underline{\Delta}_{F,p})^{-1}\|_p \leq 1$ for $\lambda>0$ is proved in \eqref{Lp resolvent est}.
In view of \cite[Formula~(2.26), Definition~2.3.1, Propositions~2.3.1, Theorem~2.3.1]{Tana}, we infer that for any $\sigma\in (0,1)$ the following properties are satisfied by $(\omega-\underline{\Delta}_{F,p})^\sigma$.
\begin{itemize}
\item $(\omega-\underline{\Delta}_{F,p})^\sigma$ is densely defined and closed.
\item $0\in \rho((\omega-\underline{\Delta}_{F,p})^\sigma)$ whenever $\omega>0$.
\item $\Sigma_{(1-\sigma)\pi+\sigma\theta_p}\subset \rho(-(\omega-\underline{\Delta}_{F,p})^\sigma)$, and for any $\lambda>0$
$$
\| \lambda(\lambda+(\omega-\underline{\Delta}_{F,p})^\sigma)^{-1}\|_p \leq 1.
$$
\item For every $\varepsilon>0$, there exists $M_\varepsilon>0$ such that for all $\lambda\in \Sigma_{(1-\sigma)\pi+\sigma\theta_p-\varepsilon}$
$$
\| \lambda(\lambda+(\omega-\underline{\Delta}_{F,p})^\sigma)^{-1}\|_p \leq M_\varepsilon.
$$
\end{itemize}
Now it follows from the Lumer-Philips theorem and \cite[Proposition~4.4]{EngNag} that $-(\omega-\underline{\Delta}_{F,p})^\sigma$ generates a contraction $C_0$-semigroup on $L_p(\M)$ and this semigroup is analytic when $1<p<\infty$.

In Lemma~\ref{S4: Positivity}, we have shown that $[\id + \lambda(\omega-\underline{\Delta}_{F,1})^\sigma]^{-1}$ is 
an  $L_\infty-$contraction and positive. The Markovian property of the semigroup $\{e^{-t (\omega-\underline{\Delta}_F})^\sigma \}_{t\geq 0}$ now follows by   \cite[Theorem~1.8.3]{Pazy}.
\end{proof}

The invertibility of $(\omega-\underline{\Delta}_{F,1})^\sigma$ for $\omega>0$ immediately implies the following lemma.
\begin{cor}
\label{S4: inverse}
For every $\omega>0$, there exists some $C>0$ such that 
$$
C\|u\|_1 \leq \|(\omega-\underline{\Delta}_{F,1})^\sigma u\|_1, \quad u\in D(( -\underline{\Delta}_{F,1})^\sigma).
$$
\end{cor}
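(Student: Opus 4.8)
The plan is to read off the estimate directly from the invertibility statement already established in Proposition~\ref{S4: Lap-frac semigroup}, where it is shown that $0\in\rho((\omega-\underline{\Delta}_{F,1})^\sigma)$ for every $\omega>0$. This means the operator $(\omega-\underline{\Delta}_{F,1})^\sigma$, regarded as a map from $D((-\underline{\Delta}_{F,1})^\sigma)$ onto $L_1(\M)$, is a bijection whose inverse $T:=((\omega-\underline{\Delta}_{F,1})^\sigma)^{-1}$ belongs to $\L(L_1(\M))$; in particular $\|T\|_{\L(L_1(\M))}<\infty$.

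First I would fix $u\in D((-\underline{\Delta}_{F,1})^\sigma)$ and set $v:=(\omega-\underline{\Delta}_{F,1})^\sigma u\in L_1(\M)$, so that $u=Tv$. Taking the $L_1$-norm and using boundedness of $T$ gives
$$
\|u\|_1 = \|Tv\|_1 \leq \|T\|_{\L(L_1(\M))}\,\|v\|_1 = \|T\|_{\L(L_1(\M))}\,\|(\omega-\underline{\Delta}_{F,1})^\sigma u\|_1.
$$
Choosing $C:=\|T\|_{\L(L_1(\M))}^{-1}>0$ then yields the claimed inequality for all $u$ in the domain.

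There is essentially no obstacle here: the genuine work, namely establishing $0\in\rho((\omega-\underline{\Delta}_{F,1})^\sigma)$, which rests on the contraction and positivity estimates of Lemma~\ref{S4: Positivity} together with the resolvent formula \eqref{S4: Resovlent fractional lap}, has already been carried out. The corollary is merely the reformulation of bounded invertibility as a coercivity (lower bound) estimate, with the constant $C$ equal to the reciprocal of the norm of the bounded inverse. The only point worth noting is that the inequality is asserted on the dense domain $D((-\underline{\Delta}_{F,1})^\sigma)$ rather than on all of $L_1(\M)$, which is automatic since $(\omega-\underline{\Delta}_{F,1})^\sigma u$ only makes sense, and $T$ only returns such $u$, precisely for $u$ in that domain.
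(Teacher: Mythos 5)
Your argument is correct and is exactly the route the paper takes: the paper states that the corollary follows immediately from the invertibility $0\in\rho((\omega-\underline{\Delta}_{F,1})^\sigma)$ established in Proposition~\ref{S4: Lap-frac semigroup}, and your proof simply spells out that observation with $C=\|((\omega-\underline{\Delta}_{F,1})^\sigma)^{-1}\|_{\L(L_1(\M))}^{-1}$.
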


Replacing $(\omega-\underline{\Delta}_{F,1})^\sigma$ by $\lambda(\omega- \underline{\Delta}_{F,1})^\sigma$ for $\lambda>0$, the new operator still satisfies Lemma~\ref{S4: Positivity} and Corollary~\ref{S4: inverse}. 

Let $\Phi(x)=|x|^{m-1}x$ and $\beta=\Phi^{-1}$. Then they are maximal monotone graphs in $\R^2$ containing $(0,0)$.
By Proposition~\ref{S4: Lap-frac semigroup}, Lemma~\ref{S4: Positivity} and Corollary~\ref{S4: inverse}, we can apply \cite[Theorem~1]{BreStr73} and prove the following proposition.
\begin{prop}
\label{S4: semilinear thm}
For any $f\in L_1(\M)$ and all $\lambda>0$, there exists a unique solution $u\in D((-\underline{\Delta}_{F,1})^\sigma)$ to 
$$ \lambda(\omega-\Delta_g )^\sigma u +\beta(u) =f .$$
Moreover, for any $f_1,f_2\in L_1(\M)$, the corresponding solutions $u_1,u_2$ satisfy  
$$
\|\beta(u_1)  - \beta(u_2)  \|_1 \leq \|f_1-f_2\|_1.
$$
\end{prop}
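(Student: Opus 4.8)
The plan is to recognize Proposition~\ref{S4: semilinear thm} as a direct application of the abstract nonlinear resolvent theory of Br\'ezis and Strauss \cite{BreStr73}, after verifying that the linear operator $\lambda(\omega-\underline{\Delta}_{F,1})^\sigma$ satisfies the structural hypotheses required by that theorem. The equation to solve is $\lambda(\omega-\Delta_g)^\sigma u + \beta(u) = f$ in $L_1(\M)$, where $\beta = \Phi^{-1}$ is a maximal monotone graph through the origin. The strategy is therefore not to construct the solution by hand, but to assemble the four ingredients that \cite[Theorem~1]{BreStr73} demands of the principal linear operator and then quote the theorem for both existence/uniqueness and the $L_1$-contraction estimate.

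First I would collect the properties of $A := \lambda(\omega-\underline{\Delta}_{F,1})^\sigma$ established earlier in the excerpt. By Proposition~\ref{S4: Lap-frac semigroup}, $-(\omega-\underline{\Delta}_{F,1})^\sigma$ generates a contraction $C_0$-semigroup on $L_1(\M)$, so the operator is densely defined, closed, and $m$-accretive. By Corollary~\ref{S4: inverse}, since $\omega>0$ gives $0\in\rho((\omega-\underline{\Delta}_{F,1})^\sigma)$, the operator $A$ is coercive in the sense that $C\|u\|_1 \le \|Au\|_1$ on $D((-\underline{\Delta}_{F,1})^\sigma)$. Most importantly, Lemma~\ref{S4: Positivity} supplies the \emph{Markovian} or order-preserving resolvent bound: for all $\mu>0$,
$$
\sup\,[\id + \mu(\omega-\underline{\Delta}_{F,1})^\sigma]^{-1} u \le \max\{0,\sup u\},
$$
which (together with its analogue for $\inf$ obtained by applying the bound to $-u$) is precisely the dispersivity/$T$-accretivity condition that lets one pass from Hilbert-space monotone-operator theory to $L_1$-theory in \cite{BreStr73}. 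As noted in the remark immediately preceding the statement, these three properties are stable under the replacement of $(\omega-\underline{\Delta}_{F,1})^\sigma$ by $\lambda(\omega-\underline{\Delta}_{F,1})^\sigma$, so $A$ inherits all of them.

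With these in place, I would invoke \cite[Theorem~1]{BreStr73}: given a linear operator $A$ on $L_1$ that is $m$-accretive, satisfies the positivity-preserving resolvent estimate, and is such that $A^{-1}$ is bounded (the coercivity from Corollary~\ref{S4: inverse}), the equation $Au + \beta(u) \ni f$ has, for every $f\in L_1(\M)$, a unique solution $u$ with $\beta(u)\in L_1(\M)$; here $u\in D(A) = D((-\underline{\Delta}_{F,1})^\sigma)$. The contraction inequality $\|\beta(u_1)-\beta(u_2)\|_1 \le \|f_1-f_2\|_1$ is the second conclusion of that same theorem and follows from the $T$-accretivity of $A$ combined with the monotonicity of $\beta$; concretely one tests the difference of the two equations against $\sgn(\beta(u_1)-\beta(u_2))$ and uses that $\langle Au_1 - Au_2, \sgn(\beta(u_1)-\beta(u_2))\rangle \ge 0$ by the order-preservation of $A$, while $\sgn(\beta(u_1)-\beta(u_2)) = \sgn(u_1-u_2)$ since $\beta$ is monotone.

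\textbf{The main obstacle} I anticipate is not the existence argument itself, which is a citation, but the careful matching of hypotheses: \cite{BreStr73} is formulated for a specific class of operators, and one must confirm that the abstract dispersivity condition in their theorem is genuinely equivalent to the form of Lemma~\ref{S4: Positivity} that we have proved, and that no additional regularity or sub-Markovian symmetry (beyond what Proposition~\ref{S4: Lap-frac semigroup} provides on $L_1$) is silently assumed. A secondary technical point worth checking is that the two-sided estimate is truly available — Lemma~\ref{S4: Positivity} is stated only as an upper bound on $\sup[\id+\lambda A_0]^{-1}u$, and one should verify that applying it to $-u$ yields the companion lower bound needed for the full $L_1$-contraction, so that the sign-function testing argument closes without a gap.
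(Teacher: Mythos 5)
Your proposal matches the paper's own argument: the paper proves this proposition exactly by noting that $\Phi$ and $\beta=\Phi^{-1}$ are maximal monotone graphs through the origin and then invoking \cite[Theorem~1]{BreStr73}, with the hypotheses supplied by Proposition~\ref{S4: Lap-frac semigroup} (generation of a contraction semigroup on $L_1(\M)$), Lemma~\ref{S4: Positivity} (the dispersive resolvent bound), and Corollary~\ref{S4: inverse} (coercivity), all of which persist after rescaling by $\lambda$. Your additional remarks on how the $L_1$-contraction estimate follows from $T$-accretivity are consistent with, though more detailed than, what the paper records.
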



In next two propositions, we will give some characterizations of the domain of $( -\underline{\Delta}_{F,p})^\sigma$.
\begin{prop}
\label{S4: char domain of Delta}
It holds that $D(\FD)\hookrightarrow \cH^{2,1+\delta}_2(\M)+\bC_\psi$ for some $\delta>0$ sufficiently small. 
The sum with $\bC_\psi$ is present only when $n=1$, in which case it is a direct sum.
As a result, for any $\varepsilon>0$,
\begin{equation}
\label{S9: embed 1}
D((-\FD)^\sigma)\hookrightarrow  \cH^{2\sigma-\varepsilon,\sigma+\delta\sigma-\varepsilon}_2(\M)+\bC_\psi.
\end{equation}
In particular, for any $1\leq p\leq 2 $ 
\begin{equation}
\label{S9: embed 2}
D((-\FD)^\sigma)\xhookrightarrow{c} L_p(\M).
\end{equation}
\end{prop}
\begin{proof}
According to \cite[Corollary 5.4]{SchSei05}, 
\begin{eqnarray}\label{Friedrichs}
D(\underline{\Delta}_{F}) = \begin{cases}
D(\underline{\Delta}_{F,\min}) \oplus \bigoplus_{q^\pm_j\in I_{0}, q^\pm_j<0} \cE_{q^\pm_j}\oplus \bC_\psi,&n=1\\
D(\underline{\Delta}_{F,\min}) \oplus \bigoplus_{q^\pm_j\in I_{0}, q^\pm_j<\frac{n-1}{2}} \cE_{q^\pm_j},&n>1.
\end{cases}
\end{eqnarray}
Here $I_{0}$ is defined in \eqref{S3.1: I_gamma}, and $\underline{\Delta}_{F,\min}:=\underline{\Delta}^0_{0,\min}$ is the minimal closed extension of $\Delta_g$ on $L_2(\M)$.  \eqref{S3.1: min domain} implies
$$
\mathcal{H}^{2, 2}_{2}( \M)\hookrightarrow D(\underline{\Delta}_{F,\min})\hookrightarrow \bigcap_{\varepsilon>0}\mathcal{H}^{2, 2-\varepsilon}_{2}( \M).
$$
When $\displaystyle \frac{n-3}{2} < q^\pm_j <\frac{n-1}{2}$, it is an easy job to verify that $\cE_{q^\pm_j}\hookrightarrow \cH^{2,1+\delta}_2(\M) $ for some $\delta>0$ sufficiently small.
\eqref{S9: embed 1} then follows from \eqref{Friedrichs}.

By \cite[Proposition~2.2.15]{Lunar95} and \cite[Lemma~2.3.5]{Tana},  it holds that
$$
D((-\FD)^\sigma) \hookrightarrow (\cH^{0,0}_2(\M) , D(\FD))_{\alpha,2},\quad 0<\alpha<\sigma.
$$
Together with Lemma~\ref{S2.1: Sobolev-interpolation}, this  establishes $D((-\FD)^\sigma)\hookrightarrow L_p(\M)$ when $1\leq p\leq 2 $. 
To see \eqref{S9: embed 2} is compact,  when $n>1$, it is a direct consequence of Proposition~\ref{S2.1: Rellich-thm}. 
In the case $n=1$, in view of \eqref{S9: embed 1} and noting that $\bC_\psi \hookrightarrow \cH^{1,\gamma_p+s}_p(\M)$ for  $s>0$ sufficiently small, we conclude that compactness of the embedding \eqref{S9: embed 2} from  Proposition~\ref{S2.1: Rellich-thm}. 
\end{proof}

\begin{prop}
\label{S4: char domain of Delta-Lp}
For any $1\leq p<\infty$, it holds that 
$D(\underline{\Delta}_{F,p})  \hookrightarrow  \cH^{2,\delta+\gamma_p}_p(\M)$
for some $\delta>0$ sufficiently small. As a result, for any $\varepsilon>0$,
$$
D((-\underline{\Delta}_{F,p})^\sigma)\hookrightarrow  \cH^{2\sigma-\varepsilon,\ \delta \sigma+\gamma_p -\varepsilon}_p(\M).
$$
In particular,   for $1\leq q \leq p$
$$
D((-\underline{\Delta}_{F,p})^\sigma)\xhookrightarrow{c} L_q(\M).
$$
\end{prop}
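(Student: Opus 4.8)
The plan is to follow the proof of Proposition~\ref{S4: char domain of Delta} line by line, with $L_2(\M)$ replaced everywhere by $L_p(\M)=\cH^{0,\gamma_p}_p(\M)$ and with the explicit Friedrichs splitting~\eqref{Friedrichs} replaced by the softer containment $D(\underline{\Delta}_{F,p})\subseteq D(\underline{\Delta}_{\max})$ recorded after Proposition~\ref{S4: contraction semigroup-Lp}. First I would prove the basic embedding $D(\underline{\Delta}_{F,p})\hookrightarrow\cH^{2,\delta+\gamma_p}_p(\M)$ for $\delta>0$ small. Since $\underline{\Delta}_{F,p}$ is closed and agrees with $\underline{\Delta}_{\max}$ on its domain, it suffices to show $D(\underline{\Delta}_{\max})\hookrightarrow\cH^{2,\delta+\gamma_p}_p(\M)$, the maximal extension being taken in $\cH^{0,\gamma_p}_p(\M)$. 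By~\eqref{S2.2: max domain} this domain splits as $D(\underline{\Delta}_{\min})\oplus\bigoplus_{q^\pm_j\in I_{\gamma_p}}\cE_{q^\pm_j}$. For the minimal part, \eqref{S3.1: min domain} gives $D(\underline{\Delta}_{\min})\hookrightarrow\bigcap_{\varepsilon>0}\cH^{2,2+\gamma_p-\varepsilon}_p(\M)\hookrightarrow\cH^{2,\delta+\gamma_p}_p(\M)$ as soon as $\delta<2$. For each of the finitely many asymptotic summands I would read off the Mellin weight directly: $\omega x^{-q^\pm_j}e_j$ lies in $\cH^{2,\gamma}_p(\M)$ exactly when $q^\pm_j<\tfrac{n+1}{2}-\gamma$, and since $q^\pm_j\in I_{\gamma_p}$ forces $q^\pm_j<\tfrac{n+1}{2}-\gamma_p$ strictly, taking $\delta$ below the positive gap between $\tfrac{n+1}{2}-\gamma_p$ and the finitely many relevant $q^\pm_j$ gives $\cE_{q^\pm_j}\hookrightarrow\cH^{2,\delta+\gamma_p}_p(\M)$ (the $j=0$ and, for $n=1$, logarithmic terms are handled the same way and impose no extra restriction).

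The conceptual content of this first step --- and the reason the correction $\C_\omega$ appearing in Proposition~\ref{S4: char domain of Delta} is absent here --- is precisely that the target weight $\delta+\gamma_p$ sits only marginally above the bottom weight $\gamma_p$. In the $L_2$ setting one aims at the sharper weight $1+\delta$, which is too large for the locally constant (and, when $n=1$, logarithmic) asymptotics and also for the slowly decaying $q^+_j$ terms, forcing one to split off $\C_\omega$ and to use the precise Friedrichs decomposition~\eqref{Friedrichs}. With the modest weight $\delta+\gamma_p$ every summand permitted by the maximal domain is automatically admissible, so the entire maximal domain embeds and no $\C_\omega$ is needed.

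Granting $D(\underline{\Delta}_{F,p})\hookrightarrow\cH^{2,\delta+\gamma_p}_p(\M)$, the fractional statement follows by interpolation exactly as before. By \cite[Proposition~2.2.15]{Lunar95} and \cite[Lemma~2.3.5]{Tana} one has $D((-\underline{\Delta}_{F,p})^\sigma)\hookrightarrow(L_p(\M),D(\underline{\Delta}_{F,p}))_{\alpha,p}$ for every $0<\alpha<\sigma$; inserting the first embedding together with $L_p(\M)=\cH^{0,\gamma_p}_p(\M)$ and then applying the first half of Lemma~\ref{S2.1: Sobolev-interpolation} to $(\cH^{0,\gamma_p}_p(\M),\cH^{2,\delta+\gamma_p}_p(\M))_{\alpha,p}$ produces $D((-\underline{\Delta}_{F,p})^\sigma)\hookrightarrow\cH^{2\alpha-\varepsilon,\,\alpha\delta+\gamma_p-\varepsilon}_p(\M)$; letting $\alpha\uparrow\sigma$ and relabelling $\varepsilon$ yields the asserted $\cH^{2\sigma-\varepsilon,\,\delta\sigma+\gamma_p-\varepsilon}_p(\M)$. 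For the final compactness, I would feed this into the Rellich--Kondrachov Proposition~\ref{S2.1: Rellich-thm} with target $L_q(\M)=\cH^{0,\gamma_q}_q(\M)$, $1\leq q\leq p$: the three index inequalities $2\sigma-\varepsilon>(n+1)(\tfrac1p-\tfrac1q)$, $2\sigma-\varepsilon>0$, and $\delta\sigma+\gamma_p-\varepsilon>\gamma_q$ all hold once $\varepsilon$ is small, using $\gamma_p-\gamma_q=(n+1)(\tfrac1q-\tfrac1p)\geq0$.

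The step demanding the most care is the interpolation inclusion $D((-\underline{\Delta}_{F,p})^\sigma)\hookrightarrow(L_p(\M),D(\underline{\Delta}_{F,p}))_{\alpha,p}$ at the endpoint $p=1$: there the semigroup generated by $-(-\underline{\Delta}_{F,1})^\sigma$ is merely a $C_0$-contraction and not analytic, so the analytic-semigroup characterisation of Lunardi does not apply verbatim, and I would instead invoke the general Komatsu--Tanabe theory of fractional powers of non-negative operators, which delivers the same real-interpolation embedding over the full range $1\leq p<\infty$. The remaining ingredients --- the maximal-domain splitting~\eqref{S2.2: max domain}, the elementary weight bookkeeping, and the index check in Proposition~\ref{S2.1: Rellich-thm} --- are routine.
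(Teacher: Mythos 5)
Your proposal is correct and takes essentially the same route as the paper: the paper's own proof consists precisely of the chain $D(\underline{\Delta}_{F,p})\hookrightarrow D(\underline{\Delta}_{\max})\hookrightarrow \cH^{2,\delta+\gamma_p}_p(\M)$ obtained from the splitting \eqref{S2.2: max domain}, followed by the remark that the rest goes ``in an analogous way'' to Proposition~\ref{S4: char domain of Delta}. Your explicit weight bookkeeping for the asymptotics spaces $\cE_{q^{\pm}_j}$ (explaining why no $\C_\omega$ correction is needed here), and your observation that at $p=1$ the interpolation inclusion must come from the Komatsu--Tanabe theory of non-negative operators rather than the analytic-semigroup characterisation, simply supply the details the paper leaves implicit.
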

\begin{proof}
From \eqref{S2.2: max domain}, one can conclude that for sufficiently small $\delta>0$ 
$$
D(\underline{\Delta}_{F,p})\hookrightarrow D(\underline{\Delta}_{\max}) \hookrightarrow  \cH^{2,\delta+\gamma_p}_p(\M).
$$
Then the assertion can be proved in an analogous way as in Proposition~\ref{S4: char domain of Delta}.
\end{proof}



\section{Weak Solutions to the Fractional Porous Medium Equation}\label{Section 5}

In this section, based on the work in Sections~\ref{Section 2}-\ref{Section 4}, we will apply the nonlinear semigroup theory and establish the existence of a unique global weak solution to \eqref{S1: FPME} with $\sigma\in (0,1)$ in the case $m\in (0,1)\cup (1,\infty)$. 
Here $m$ is from \eqref{S1: FPME}. 
Note that, when $m>1$, $\Phi(u)\in D((-\underline{\Delta}_{F,1})^\sigma)\hookrightarrow L_1(\M)$ shows that $u\in L_m(\M)\hookrightarrow L_1(\M)$.
Then in Section~\ref{Section 7} we will show this solution is indeed strong.
At the end of Section~\ref{Section 7}, the case $m=1$ will be handled separately.

\begin{definition}\cite[Chapter II.3]{Bar73}
\begin{itemize}
\item[(i)] A nonlinear operator $\cA: D(\cA)\subseteq X\to X$ defined in a Banach space $X$ is called accretive if for all $\lambda>0$
\begin{equation}
\label{S4: contraction}
\|(\id +\lambda \cA) x_1 - (\id +\lambda \cA) x_2\|_X\geq \|x_1-x_2\|_X,\quad x_1,x_2\in D(\cA).
\end{equation}
\item[(ii)] A nonlinear operator $\cA$ defined in a Banach space $X$ is called $m$-accretive if $\cA$ is accretive and it satisfies the range condition
$$
Rng(\id +\lambda \cA)=X, \quad \lambda>0.
$$
\end{itemize}
\end{definition}

Let $X= L_1(\M)$ and $\cA(u):=(\omega-\underline{\Delta}_{F,1})^\sigma\Phi(u)$   with domain 
$$
D(\cA)=\{u\in L_1(\M): \Phi(u)\in D((-\underline{\Delta}_{F,1})^\sigma)\}.
$$ 
Note that, when $m\geq 1$, $u\in D(\cA)$ iff $\Phi(u)\in D((-\underline{\Delta}_{F,1})^\sigma)$.

Proposition~\ref{S4: semilinear thm} implies that, for any $\lambda>0$, $(\id+\lambda\cA)^{-1}$ is a bijection between $D(\cA)$ and $L_1(\M)$ when $m\geq 1$, and \eqref{S4: contraction} is fulfilled. 
In the case $m<1$, given any $u\in D(\cA)$, it holds that
\begin{equation}
\label{S6: f}
f:=\lambda\cA (u) + u \in L_1(\M).
\end{equation}
Now assuming $f\in  L_1(\M)$, let
$u$ be the unique solution of \eqref{S6: f} obtained in Proposition~\ref{S4: semilinear thm}. 
We have $u=f-\lambda\cA (u) \in L_1(\M)$ and $\Phi(u)\in D((-\underline{\Delta}_{F,1})^\sigma)$, and thus $u\in D(\cA)$.
Therefore, $\cA$ is $m$-accretive. 

\begin{lem}
$D(\cA)$ is dense in $L_1(\M)$.
\end{lem}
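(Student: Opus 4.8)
The plan is to exhibit, for each element of a dense subset of $L_1(\M)$, an approximating sequence lying in $D(\cA)$. Since $(\M,g)$ has finite volume, $L_1(\M)\cap L_\infty(\M)$ is dense in $L_1(\M)$, so it suffices to show that every $w\in L_1(\M)\cap L_\infty(\M)$ lies in the $L_1$-closure of $D(\cA)$. The structural observation driving the argument is that, since $\Phi$ is a homeomorphism of $\R$ with inverse $\beta$,
$$
D(\cA)=\{u\in L_1(\M):\Phi(u)\in D((-\underline{\Delta}_{F,1})^\sigma)\}=\beta\big(D((-\underline{\Delta}_{F,1})^\sigma)\big).
$$
Thus the goal is to approximate $w$ by functions of the form $\beta(v_n)$ with $v_n\in D((-\underline{\Delta}_{F,1})^\sigma)$.

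First I would set $v:=\Phi(w)=|w|^{m-1}w$. Because $w\in L_\infty(\M)$ and ${\rm Vol}(\M)<\infty$, this gives $v\in L_1(\M)\cap L_\infty(\M)$ with $\|v\|_\infty\le\|w\|_\infty^{m}$. I would then regularize $v$ through the resolvents of the fractional operator: put $B_n:=[\id+\tfrac1n(\omega-\underline{\Delta}_{F,1})^\sigma]^{-1}$ and $v_n:=B_n v$, so that $v_n\in D((-\underline{\Delta}_{F,1})^\sigma)$ by construction. By Proposition~\ref{S4: Lap-frac semigroup}, $-(\omega-\underline{\Delta}_{F,1})^\sigma$ generates a contraction $C_0$-semigroup on $L_1(\M)$, and therefore $n\,(n+(\omega-\underline{\Delta}_{F,1})^\sigma)^{-1}=B_n$ converges strongly to $\id$ on $L_1(\M)$; hence $v_n\to v$ in $L_1(\M)$. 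Crucially, the $L_\infty$-contractivity established inside the proof of Lemma~\ref{S4: Positivity}, namely $\|B_n v\|_\infty\le\|v\|_\infty$, furnishes the uniform bound $\|v_n\|_\infty\le\|v\|_\infty=:C$.

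Finally I would set $u_n:=\beta(v_n)$. Since $\Phi(u_n)=v_n\in D((-\underline{\Delta}_{F,1})^\sigma)$, each $u_n$ lies in $D(\cA)$. It then remains to pass to the limit through the nonlinearity: from $v_n\to v$ in $L_1(\M)$ with $\|v_n\|_\infty\le C$, every subsequence admits an a.e.-convergent sub-subsequence; continuity of $\beta$ and the domination $|\beta(v_n)|\le C^{1/m}\in L_1(\M)$ (finite volume) yield, via dominated convergence and the subsequence principle, that $u_n=\beta(v_n)\to\beta(v)=\beta(\Phi(w))=w$ in $L_1(\M)$. Hence $w\in\overline{D(\cA)}$, and density follows.

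I expect the only genuine obstacle to be the passage to the limit through $\beta$, which is continuous but in general merely Hölder (not Lipschitz) near the origin; this is exactly why a uniform $L_\infty$ bound on the regularizations $v_n$ is indispensable, and why invoking the $L_\infty$-contractivity of the linear resolvent from Lemma~\ref{S4: Positivity} — not just the $L_1$-convergence coming from the $C_0$-semigroup — is the heart of the argument. Everything else is soft: density of $L_1(\M)\cap L_\infty(\M)$ in $L_1(\M)$, the approximate-identity property of the resolvents, and dominated convergence on a finite-measure space.
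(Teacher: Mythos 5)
Your proof is correct, but it follows a genuinely different route from the paper's. The paper splits into the cases $m\geq 1$ and $0<m<1$: it takes an arbitrary approximating sequence $(u_k)_k$ in $D((-\underline{\Delta}_{F,1})^\sigma)$ (resp.\ in $C_0^\infty(\M)$, chosen with a uniform $L_\infty$ bound) converging to $\Phi(w)$ in $L_1(\M)$, and pushes the convergence through $\beta$ using the $1/m$-H\"older continuity of $\beta$ when $m\geq 1$ and its local Lipschitz continuity when $m<1$. You instead treat all $m>0$ uniformly by choosing a \emph{specific} regularization, $v_n=[\id+\tfrac1n(\omega-\underline{\Delta}_{F,1})^\sigma]^{-1}\Phi(w)$: the resolvent simultaneously places $v_n$ in the domain, gives $v_n\to\Phi(w)$ in $L_1(\M)$ by the standard approximate-identity property, and -- via the $L_\infty$-contractivity proved in Lemma~\ref{S4: Positivity} -- supplies the uniform $L_\infty$ bound that lets dominated convergence (on the finite-volume space) replace any modulus-of-continuity estimate for $\beta$. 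What your approach buys is the elimination of the case distinction and of the need to hand-pick an approximating sequence with a built-in $L_\infty$ bound (a point the paper's Case~2 asserts without detail); what it costs is reliance on the sub-Markovian resolvent bound, which the paper's argument for this particular lemma does not need, and the loss of the explicit convergence rate that the H\"older estimate gives for $m\geq1$. One small caveat: your displayed identity $D(\cA)=\beta\bigl(D((-\underline{\Delta}_{F,1})^\sigma)\bigr)$ is only clear for $m\geq 1$; for $0<m<1$ a general $v\in D((-\underline{\Delta}_{F,1})^\sigma)\subset L_1(\M)$ need not have $\beta(v)=|v|^{1/m-1}v\in L_1(\M)$. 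This does not affect your argument, since your $v_n$ are uniformly bounded and hence $\beta(v_n)\in L_\infty(\M)\subset L_1(\M)$, so $u_n\in D(\cA)$ as required; I would simply drop the identity or state it as the one inclusion you actually use.
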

\begin{proof}
Note that $\beta\in BC^{1/m}(\R)$ when $m\geq 1$ and $\beta\in C^{1-}(\R)$ when $0<m<1$. 

Case 1: $m\geq 1$. For any $w\in L_m(\M)$, there exists a sequence $(u_k)_k \subset D((- \underline{\Delta}_{F,1})^\sigma)$ converging to $\Phi(w)$ in  $L_1(\M)$. It follows that
$$
\|\beta(u_n)- w\|_1 \leq C \| (u_n-\Phi(w))^{1/m}\|_1 \leq C\| u_n-\Phi(w)\|_1^{1/m}.
$$
This proves that the closure  of $D(\cA)$ contains $L_m(\M)$. Since $L_m(\M)$ is dense in $L_1(\M)$, we infer that $D(\cA)$ is dense in $L_1(\M)$.

Case 2: $0<m<1$. 
Given arbitrary $w\in L_\infty(\M)$, there exists a  sequence   $(u_k)_k \subset C^\infty_c(\M) \subset D((- \underline{\Delta}_{F,1})^\sigma)$  converging to $\Phi(w) $ in  $L_1(\M)$ satisfying that $\|\beta(u_k)\|_\infty\leq 2\|w\|_\infty$. So the Lipschitz continuity of $\beta$ implies 
\begin{align*}
\|\beta(u_k) - w\|_1 \leq C(w) \| u_k  - \Phi(w)\|_1 \to 0
\end{align*}
as $k\to \infty$. As above, this implies the density of $D(\cA)$ in $L_1(\M)$. 
\end{proof}


In the sequel, we will always use $\Delta_g$ to denote the closed extensions $\underline{\Delta}_{F,p}$ whenever the choice of $p$ is clear from  context.

With the above preparations, we can apply the Crandall-Liggett generation theorem \cite[Theorem~I]{CraLig71} to prove the existence of a $L_1$-global mild solution to the following $\omega$-fractional porous medium equation: 
\begin{equation}
\label{S5: FPME-omega}
\left\{\begin{aligned}
\partial_t u +(\omega-\Delta_g)^\sigma (|u|^{m-1}u  )&=0   &&\text{on}&&\M\times (0,\infty);\\
u(0)&=u_0    &&\text{on}&&\M .
\end{aligned}\right.
\end{equation}

Mild solutions are defined as the limit of a sequence of approximation solutions by implicit time discretization. 
More precisely, since a mild solution  does not depend on  a  specific choice of discretization of a time interval $[0,T]$, we  may equally divide $[0,T]$ into $n$ subintervals. Let  $t_k=kT/n$ for $k=0,1,\cdots,n$. Then the discretized problem to \eqref{S5: FPME-omega} is 
\begin{equation}
\label{S4: DPME}
\left\{\begin{aligned}
\frac{T}{n} (\omega-\Delta_g)^\sigma\Phi(u_{n,k;\omega}) &= u_{n,k-1;\omega}- u_{n,k;\omega};\\
u_{n,0;\omega} &=u_0  .
\end{aligned}\right.
\end{equation}
We define the piecewise solution as
$$
u_{n;\omega}(0)=u_0,\quad u_{n;\omega}(t)= u_{n,k;\omega} \quad \text{for } t\in (t_{k-1},t_k].
$$
Then the mild solution $u_\omega$ is defined as the uniform limit of $u_{n;\omega}$, i.e. for any $\varepsilon>0$, 
\begin{equation}
\label{S5: mild sol def and converg}
\|u_\omega(t)-u_{n;\omega}(t)\|_1 <\varepsilon, \quad t\in [0,T]
\end{equation}
for sufficiently large $n$. 
\cite[Theorem~I]{CraLig71} then implies that  the following proposition holds true.
\begin{prop}
\label{S6: global sol}
Let $(\M,g)$ be  an $(n+1)$-dimensional compact conical manifold. 
Assume that $m>0$. 
For every $u_0\in L_1(\M)$, \eqref{S5: FPME-omega} has a unique global mild solution 
$$
u_\omega\in C([0,\infty), L_1(\M)).
$$	
\end{prop}


In the following, we  will  show that   \eqref{S1: FPME} has a global weak solution   as long as $u_0\in L_\infty(\M) \xhookrightarrow{d} L_1(\M) $. 
\begin{definition}\label{Def: weak sol}
For $\omega\in [0,\infty)$, we say that $u$ is an $L_1$-weak solution to \eqref{S5: FPME-omega} on $[0,T)$ if
\begin{itemize}
\item $u\in L_\infty((0,T), L_{m+1}(\M))$ and,
\item $\Phi(u)\in L_2((0,T), D(( -\FD)^{\sigma/2})) \cap L_{\infty,loc}((0,T), D(( -\FD)^{\sigma/2}))$.
\end{itemize}
Moreover, for every $\phi\in C^1_0([0,T), C_c^\infty(\M))$  it holds that
\begin{equation}
\label{S4: weak sol}
\int_0^T \int_\M (\omega-\Delta_g)^{\sigma/2} \Phi(u)  (\omega-\Delta_g)^{\sigma/2}  \phi \, d\mu_g dt 
 =  \int^T_0\int_\M u \partial_t \phi \, d\mu_g dt+\int_\M  u_0\phi(0)\, d\mu_g.
\end{equation}
If in addition, $u$ satisfies
\begin{itemize}
\item $u\in C([0,T), L_1(\M))$, we call $u$ a weak solution to \eqref{S5: FPME-omega}.
\end{itemize}
\end{definition}

By the self-adjointness of $(\omega-\FD)^\sigma$, we have
\begin{equation}
\label{S5: Negative order Lap}
\langle(\omega-\Delta_g)^\sigma u ,v \rangle = \langle (\omega-\Delta_g)^{\sigma/2} u, (\omega -\Delta_g)^{\sigma/2} v \rangle   
\end{equation}
for all $u\in D(( -\FD)^{\sigma}), \, v\in D(( -\FD)^{\sigma/2})$.
In \eqref{S5: Negative order Lap}, $\omega=0$ is admissible.

Note that $(\omega-\underline{\Delta}_{F,1})^\sigma$ is an extension of $(\omega-\underline{\Delta}_F)^\sigma$ as $\underline{\Delta}_{F,1}$ extends $\underline{\Delta}_F$. 
From Proposition~\ref{S4: Lap-frac semigroup}, we infer that $u\in D((-\underline{\Delta}_{F,1})^\sigma)$ and $(\omega-\Delta_g)^\sigma u \in L_2(\M)$ implies
$$
u\in D(( -\underline{\Delta}_F)^\sigma).
$$
From Proposition~\ref{S4: semilinear thm}, we already know that 
$\Phi(u_{n,k;\omega})\in D(( -\underline{\Delta}_{F,1})^\sigma)$.
In addition, \cite[Proposition~4]{BreStr73} implies that  
\begin{equation}
\label{S5: L_infty contraction dis sol}
 \|u_{n,k;\omega}\|_\infty \leq 	\|u_0\|_\infty.
\end{equation}
In view of \eqref{S4: DPME}, \eqref{S5: L_infty contraction dis sol} reveals that $(\omega-\Delta_g)^\sigma \Phi(u_{n,k;\omega}) \in L_\infty(\M)$.

Multiplying the first line of \eqref{S4: DPME} by $\Phi(u_{n,k;\omega})$ and integrating over $\M$, we obtain in virtue of \eqref{S5: Negative order Lap} that
\begin{align}
\label{S4: weak est 1}
\notag&\frac{T}{n} \int_\M |(\omega-\Delta_g)^{\sigma/2} \Phi(u_{n,k;\omega})|^2\, d\mu_g\\
\notag =& \int_\M  u_{n,k-1;\omega} \Phi(u_{n,k;\omega})\, d\mu_g -\int_\M  u_{n,k;\omega} \Phi(u_{n,k;\omega})\, d\mu_g \\
\leq & (\int_\M  |u_{n,k-1;\omega}|^{m+1}  \, d\mu_g  )^{\frac{1}{m+1}} (\int_\M | \Phi(u_{n,k;\omega})|^{\frac{m+1}{m}}\, d\mu_g )^{\frac{m}{m+1}}   -  \int_\M | u_{n,k;\omega}|^{m+1} \, d\mu_g \\
\label{S4: weak est last}
=& \frac{1}{m+1}\int_\M  |u_{n,k-1;\omega}|^{m+1}  \, d\mu_g  + \frac{m}{m+1} \int_\M | u_{n,k;\omega}|^{m+1}\, d\mu_g -  \int_\M | u_{n,k;\omega}|^{m+1} \, d\mu_g\\
\notag \leq & \frac{1}{m+1} (\int_\M | u_{n,k-1;\omega}|^{m+1} \, d\mu_g  - \int_\M | u_{n,k;\omega}|^{m+1} \, d\mu_g ).
\end{align}
We have used the H\"older inequality in  \eqref{S4: weak est 1} and the Young's inequality in \eqref{S4: weak est last}. 
Then adding from $k=1$ to $k=n$ yields
\begin{equation}
\label{S4: unif est 1}
\int_0^T \int_\M |(\omega-\Delta_g)^{\sigma/2} \Phi(u_{n;\omega})|^2\, d\mu_g dt \leq \frac{1}{m+1}\int_\M |u_0 |^{m+1}\, d\mu_g .
\end{equation}
Note that $(\omega-\Delta_g)^{\sigma/2}$ is invertible.
Therefore, $ \Phi(u_{n;\omega})$ is uniformly bounded for all $n$ in $L_2((0,T), D(( -\Delta_g)^{\sigma/2}))$.

On the other hand, \eqref{S4: weak est 1} or \cite[Proposition~4]{BreStr73} also implies that
\begin{equation}
\label{S4: unif est 0}
\int_\M | u_{n,k;\omega}|^{m+1} \, d\mu_g \leq \int_\M | u_{n,k-1;\omega}|^{m+1} \, d\mu_g \leq \int_\M |u_0 |^{m+1}\, d\mu_g.
\end{equation}
This yields the uniform boundedness of $u_{n,k;\omega}(t)$ in $L_{m+1}(\M)$ for all $t\in [0,T)$,
which implies that 
\begin{equation}
\label{S4: unif est 2}
u_\omega\in L_\infty((0,T),L_{m+1}(\M) ).
\end{equation}
By the definition of mild solutions, for all $t\in [0,T)$, (up to a subsequence) $u_{n;\omega}(t)$ converge to $u_\omega(t)$ pointwise a.e. 
Hence we can conclude from \eqref{S4: unif est 1} that 
$$
(\omega-\Delta_g)^{\sigma/2} \Phi(u_{n;\omega})\rightharpoonup (\omega-\Delta_g)^{\sigma/2} \Phi(u_\omega)\quad \text{ in}\quad L_2((0,T), L_2(\M))
$$
and 
\begin{equation}
\label{S4: unif est 3}
\|(\omega-\Delta_g)^{\sigma/2} \Phi(u_{ \omega})\|_{L_2((0,T), L_2(\M))} \leq \frac{1}{m+1}\int_\M |u_0 |^{m+1}\, d\mu_g .
\end{equation}
Now multiplying the first line of \eqref{S4: DPME} by   $\phi\in C^1_0([0,T), C^\infty_c(\M))$  and integrating over $\M$, we infer that
$$
\langle (\omega-\Delta_g)^{\sigma/2}  \Phi(u_{n,k;\omega}), (\omega-\Delta_g)^{\sigma/2}  \phi \rangle =\frac{n}{T}\int_\M (u_{n,k-1;\omega}-u_{n,k;\omega})\phi\, d\mu_g.
$$
Then integrate over $[t_{k-1},t_k)$ and sum over $k=1,2,\cdots,n$. The right hand side equals
\begin{align*}
&\frac{n}{T}\sum\limits_{k=1}^n\int_{t_{k-1}}^{t_k}\int_\M (u_{n,k-1;\omega} -u_{n,k;\omega})\phi \, d\mu_g dt\\
=& \int_0^T\int_\M u_{n;\omega}(t) \frac{\phi(t+T/n) -\phi(t)}{T/n}\, d\mu_g dt +  \frac{n}{T}\int_0^{t_1} \int_\M  u_0 \phi(t)\, d\mu_g dt\\
& - \frac{n}{T}\int_{t_{n-1}}^T \int_\M u_{n;\omega}(T) \phi(t)\, d\mu_g dt.
\end{align*}
As  $(\omega-\Delta_g)^{\sigma/2}  \phi\in L^2((0,T), L_2(\M))$, letting $n\to\infty$ yields 
\begin{align}
\label{S4: weak sol-u_omega}
\notag & \int^T_0\int_\M u_\omega \partial_t \phi \, d\mu_g dt+\int_\M  u_0 \phi(0)\, d\mu_g\\
\notag =&\int_0^T \int_\M (\omega-\Delta_g)^{\sigma/2} \Phi(u_\omega)  (\omega-\Delta_g)^{\sigma/2}  \phi \, d\mu_g dt \\
 =&\int_0^T \int_\M   \Phi(u_\omega)  (\omega-\Delta_g)^\sigma   \phi \, d\mu_g dt.
\end{align}
The last equality will be used to obtain a weak solution to \eqref{S1: FPME}.


For every $v\in L_2(\M)$, it follows from Proposition~\ref{S4: contraction semigroup-Lp} and the formula above \cite[Formula~2.2.24]{Lunar95} that
\begin{align}
\label{S4: unif est 4}
\notag \|(\omega-\Delta_g)^{-\sigma} v\|_2
&\leq C \int_0^\infty t^{\sigma-1} \| e^{t(\Delta_g-\omega)} v\|_2\, dt\\
&\leq C \int_0^\infty t^{\sigma-1}  e^{-\omega t} \, dt \|v\|_2 \leq C \omega^{-\sigma}\|v\|_2.
\end{align}
This result, together with \eqref{S4: unif est 3}  and  \eqref{S4: control diff},  yields
\begin{align}
\label{S4: unif est 5}
\notag &\|(-\Delta_g)^{\sigma/2} \Phi(u_{ \omega})\|_{L_2((0,T), L_2(\M))} \\
\notag \leq &\|(\omega-\Delta_g)^{\sigma/2} \Phi(u_{ \omega})\|_{L_2((0,T), L_2(\M))} + C\omega^{\sigma/2} \|\Phi(u_\omega)\|_{L_2((0,T), L_2(\M))}\\
 \leq & C\|(\omega-\Delta_g)^{\sigma/2} \Phi(u_{ \omega})\|_{L_2((0,T), L_2(\M))}<\infty.
\end{align}
\eqref{S4: unif est 3} and \eqref{S4: unif est 5} imply that there exists a sequence $(\omega_k)_k$ with $\omega_k\to 0^+$ such that
\begin{equation}
\label{S4:converg-1}
(-\Delta_g)^{\sigma/2}\Phi(u_{\omega_k})\rightharpoonup w \quad \text{in}\quad L_2((0,T), L_2(\M))
\end{equation}
for some $w$ in $L_2((0,T), L_2(\M) )$;
and it follows from \eqref{S4: unif est 0} and \eqref{S4: unif est 2} that for a.a. $t\in (0,T)$
\begin{equation}
\label{S4:converg-2}
u_{\omega_k}(t) \rightharpoonup u(t)  \quad \text{in}\quad L_{ m+1 }(\M)
\end{equation}
for some $u\in L_\infty((0,T),L_{ m+1 }(\M))$.

Let us now refine the estimate for $\|(-\Delta_g)^{\sigma/2} \Phi(u_\omega)\|_2$. 
Suppose that $u_\omega,\hat{u}_\omega$ are mild  solutions to \eqref{S5: FPME-omega} with respect to the initial data $u_0,\hat{u}_0$.
It follows from \cite[Theorem~1]{BreStr73} that
$$
\| u_{n;\omega}(t)- \hat{u}_{n;\omega}(t)\|_1 \leq \| u_0- \hat{u}_0\|_1.
$$
Assume, to the contrary,  that
$$
\| u_0- \hat{u}_0\|_1<\| u_\omega(t)- \hat{u}_\omega (t)\|_1
$$
for some $t>0$. Then
\begin{align*}
\| u_0- \hat{u}_0 \|_1 &< \| u_\omega(t)- \hat{u}_\omega (t)\|_1\\
&= \int_\M [ u_\omega(t)- \hat{u}_\omega (t)] {\rm sign}(u_\omega(t)- \hat{u}_\omega (t))\, d\mu_g\\
&= \lim\limits_{n\to \infty} \int_\M [ u_{n;\omega}(t)- \hat{u}_{n;\omega} (t)] {\rm sign}(u_\omega(t)- \hat{u}_\omega (t))\, d\mu_g\\
&\leq \lim\limits_{n\to \infty} \| u_{n;\omega}(t)- \hat{u}_{n;\omega}(t)\|_1 .
\end{align*}
A  contradiction. Therefore, 
\begin{equation}
\label{S5: L1-contraction: L1 weak sol}
\| u_\omega(t)- \hat{u}_\omega(t)\|_1 \leq \| u_0- \hat{u}_0\|_1.
\end{equation}
When $m\neq 1$,  following the proof of \cite[Theorem 1]{BenCra81}, see also \cite[Proposition~8.1]{PalRodVaz12}, and using \eqref{S5: L1-contraction: L1 weak sol},  it holds that as $h\to 0$
\begin{equation}
\label{S4: der est for u_omega}
\frac{1}{h}\int_\M |u_\omega(t+h)-u_\omega(t)|\, d\mu_g \leq \frac{2}{|m-1|t} \|u_0\|_1 + o(1).
\end{equation}
Fix $\tau\in (0,T)$. For every $t\in (\tau,T)$, choose $h>0$ so small that $t+h<T$. 
For a time discretization of $[0,T)$ with $n$ subintervals $[t_{k-1},t_k)$, without loss of generality, we may always assume that  $t=t_i$ and $t+h=t_j$ for some $i,j\in \{1,2,\cdots, n-1\}$.
Otherwise, we can always change the end points of the closest subintervals, as the solution $u_\omega$ does not depend on the choice of time discretization.
Using \eqref{S4: weak est 1}, we have
\begin{align*}
 &(t_k-t_{k-1}) \int_\M |(\omega-\Delta_g)^{\sigma/2} \Phi(u_{n,k;\omega})|^2\, d\mu_g\\
\leq &  \frac{1}{m+1} \int_\M  ( |u_{n,k-1;\omega}|^{m+1} -|u_{n,k;\omega}|^{m+1}  )\, d\mu_g.
\end{align*}
Then we sum over all the subintervals $[t_{k-1}, t_k)$ contained in $[t,t+h)$.
\begin{align}
\notag &   \int_t^{t+h} \int_\M |(\omega-\Delta_g)^{\sigma/2} \Phi(u_{n;\omega}(s))|^2\, d\mu_g\, ds\\
\notag  \leq & \frac{1}{m+1} \int_\M  ( |u_{n;\omega}(t)|^{m+1} -|u_{n ;\omega}(t+h)|^{m+1}  )\, d\mu_g\\
\notag \leq & \frac{1}{m+1} \int_\M  \big| |u_{n;\omega}(t)|^m u_{n;\omega}(t) -|u_{n ;\omega}(t+h)|^m  u_{n ;\omega} (t+h) \big| \, d\mu_g \\
\label{S4: weak est 2}
\leq & C(u_0)     \frac{1}{m+1} \int_\M |     u_{n ;\omega}(t)-  u_{n ;\omega}(t+h) | \, d\mu_g   
\end{align}
for some $C=C(u_0)>0$. Here \eqref{S4: weak est 2} is due to \eqref{S5: L_infty contraction dis sol} and the fact that 
$$[x\to |x|^m x]\in C^{1-}(\R)\quad \text{for}\quad m>0.$$
Dividing both sides by $h$ and letting $n\to\infty$ yields that  
\begin{align}
\label{S4: weak est 3}
\notag &   \frac{1}{h}\int_t^{t+h} \int_\M |(\omega-\Delta_g)^{\sigma/2} \Phi(u_\omega)(s)|^2\, d\mu_g\, dt\\
\leq &      \frac{C(u_0)}{m+1}  \frac{1}{h}\int_\M |     u_\omega(t+h)-  u_\omega(t)| \, d\mu_g\\
\label{S4: weak est 4}
\leq & C(u_0) \frac{2 }{ (m+1)|m-1|\tau }   \|u_0\|_1 +o(1).
\end{align}
Here \eqref{S4: weak est 3} follows directly from \eqref{S5: mild sol def and converg}  and \eqref{S4: weak est 2},
and    \eqref{S4: weak est 4} is derived from \eqref{S4: der est for u_omega}.

The above inequality holds for all $h>0$ small.
Therefore, for  a.a. $t\in (\tau,T)$, $\|(\omega-\Delta_g)^{\sigma/2} \Phi(u_\omega)(t)\|_2$ is uniformly bounded with bound independent of $\omega$, i.e.
\begin{equation*}
\|(\omega-\Delta_g)^{\sigma/2} \Phi(u_\omega)(t)\|_2 <M ,\quad t\in (\tau,T).
\end{equation*} 
In the sequel,  $M>0$ always denotes a constant independent of $\omega$.
By an analogous argument to \eqref{S4: unif est 5}, we can infer that
\begin{equation}
\label{S4: unif est 6}
\|( -\Delta_g)^{\sigma/2} \Phi(u_\omega)(t)\|_2 <M ,\quad t\in (\tau,T).
\end{equation}
Notice that \eqref{S5: mild sol def and converg} and \eqref{S5: L_infty contraction dis sol}  imply that
\begin{equation}
\label{S5: L_infty contraction mild sol}
\| u_\omega(t)\|_\infty \leq \|u_0\|_\infty .
\end{equation}
One can conclude that
$$
\|\Phi(u_\omega)\|_{L_\infty((\tau,T),L_2(\M) )}<M,
$$
and thus
$$
\|\Phi(u_\omega)\|_{L_\infty((\tau,T), D((-\FD )^{\sigma/2}))}<M .
$$
Hence $u_\omega$ is indeed a weak solution to \eqref{S5: FPME-omega} for any $\omega>0$. 

Now we can apply Proposition~\ref{S4: char domain of Delta-Lp} to show  that for every $t\in (\tau,T)$, there exists a subsequence of $(\omega_k)_k$, still denoted by $(\omega_k)_k$, such that
\begin{equation}
\label{S4:converg 3}
\Phi(u_{\omega_k})(t)\to v(t) \quad \text{in} \quad L_2(\M),
\end{equation}
and as $D((-\FD)^{\sigma/2})$ is a Hilbert space,
\begin{equation}
\label{S4:converg 4}
\Phi(u_{\omega_k})(t) \rightharpoonup v(t) \quad \text{in} \quad D((-\FD )^{\sigma/2}).
\end{equation}
We can thus infer that $\Phi(u_{\omega_k})(t) \to v(t)$ pointwise a.e. (up to a subsequence).
In view of \eqref{S4:converg-1} and \eqref{S4:converg-2}, we conclude that 
$v(t)=\Phi(u)(t)$ a.e.;  and since $\tau$ is arbitrary, in \eqref{S4:converg-1}, we have
 $w=(-\Delta_g)^{\sigma/2}\Phi(u)$.

Letting $\omega_k\to 0^+$ in \eqref{S4: weak sol-u_omega}, then \eqref{S5: Negative order Lap},  \eqref{S4:converg-2}  and the dominated convergence theorem imply that
\begin{align}
\notag  &\int^T_0\int_\M u \partial_t \phi \, d\mu_g dt+\int_\M u_0\phi(0)\, d\mu_g  \\
   =& \lim\limits_{k\to \infty} \int^T_0\int_\M u_{\omega_k} \partial_t \phi \, d\mu_g dt+\int_\M u_0\phi(0)\, d\mu_g\\
\notag  = &   \lim\limits_{k\to \infty}\int_0^T \int_\M   ( \omega_k-\Delta_g)^{\sigma/2 } \Phi(u_{\omega_k} )  ( \omega_k-\Delta_g)^{\sigma/2 }  \phi \, d\mu_g dt\\
\notag  = & \lim\limits_{k\to \infty} \int_0^T \int_\M    \Phi(u_{\omega_k} )  ( \omega_k-\Delta_g)^\sigma   \phi \, d\mu_g dt\\
\label{S4: u est 1}
= & \lim\limits_{k\to \infty}\int_0^T \int_\M    \Phi(u_{\omega_k} )  ( -\Delta_g)^\sigma   \phi \, d\mu_g dt\\
\label{S4: u est 2}
&+\lim\limits_{k\to \infty}\int_0^T \int_\M    \Phi(u_{\omega_k} )  [( \omega_k-\Delta_g)^\sigma - ( -\Delta_g)^\sigma ]  \phi \, d\mu_g dt
.
\end{align}
Since for any $1<p<\infty$,
$$
\|( \omega_k-\Delta_g)^\sigma  \phi  -  ( -\Delta_g)^\sigma  \phi\|_p \leq C\omega_k^\sigma \|\phi\|_p,
$$
in view of \eqref{S4: unif est 0}, 
we conclude that \eqref{S4: u est 2} tends to $0$ as $k\to \infty$. 
\eqref{S4: u est 1} can be estimated as follows.
\begin{align*}
&\lim\limits_{k\to \infty}\int_0^T \int_\M    \Phi(u_{\omega_k} )  ( -\Delta_g)^\sigma   \phi \, d\mu_g dt\\
=& \lim\limits_{k\to \infty}\int_0^T \int_\M    ( -\Delta_g)^{\sigma/2}   \Phi(u_{\omega_k} )  ( -\Delta_g)^{\sigma/2}   \phi \, d\mu_g dt \\
=&\int_0^T \int_\M    ( -\Delta_g)^{\sigma/2}   \Phi(u )  ( -\Delta_g)^{\sigma/2}   \phi \, d\mu_g dt.
\end{align*}
In the last step, we have used \eqref{S4:converg-1}.
This gives rise to
$$
 \int^T_0\int_\M u \partial_t \phi \, d\mu_g dt+\int_\M u_0\phi(0)\, d\mu_g
 =\int_0^T \int_\M   ( -\Delta_g)^{\sigma/2 } \Phi(u )  ( -\Delta_g)^{\sigma/2 }  \phi \, d\mu_g dt.
$$
Therefore, we obtain an $L_1$-weak solution to \eqref{S1: FPME}.


\begin{lem}\label{S5: Lemma-unique}
There is at most one $L_1$-weak solution to \eqref{S1: FPME} with $u_0\in L_\infty(\M)$.
\end{lem}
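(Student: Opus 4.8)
The plan is to run an Oleinik-type duality argument that converts the monotonicity of $\Phi(s)=|s|^{m-1}s$ into uniqueness. Let $u,\hat u$ be two $L_1$-weak solutions to \eqref{S1: FPME} (that is, \eqref{S4: weak sol} with $\omega=0$) sharing the initial datum $u_0\in L_\infty(\M)$, and set $w:=u-\hat u$ and $W:=\Phi(u)-\Phi(\hat u)$. Subtracting the two weak formulations, the initial contributions cancel and we are left with
$$
\int_0^T\!\!\int_\M (-\Delta_g)^{\sigma/2}W\,(-\Delta_g)^{\sigma/2}\phi\,d\mu_g\,dt=\int_0^T\!\!\int_\M w\,\partial_t\phi\,d\mu_g\,dt
$$
for every admissible $\phi$. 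The decisive structural fact is that $\Phi$ is strictly increasing, so $wW\geq 0$ pointwise, with equality only where $u=\hat u$.

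The heart of the argument is the choice of test function. I would take the backward time primitive
$$
\psi(x,t):=\int_t^T W(x,s)\,ds,
$$
so that $\psi(\cdot,T)=0$ and $\partial_t\psi=-W$. Since $u$ and $\hat u$ are weak solutions, $W\in L_2((0,T),D((-\FD)^{\sigma/2}))$, whence $\psi(t)\in D((-\FD)^{\sigma/2})$ for each $t$ while $V:=(-\Delta_g)^{\sigma/2}\psi$ satisfies $V\in H^1((0,T),L_2(\M))$ with $\partial_t V=-(-\Delta_g)^{\sigma/2}W$; in particular $V\in C([0,T],L_2(\M))$, so the traces $V(0),V(T)$ are well defined. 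Because $\psi$ is not of class $C^1_0([0,T),C_0^\infty(\M))$, the first task is to verify that \eqref{S4: weak sol} (with $\omega=0$) persists for $\psi$; this follows by approximating $\psi$ in the energy norm of $H^1((0,T),L_2(\M))\cap L_2((0,T),D((-\FD)^{\sigma/2}))$ by admissible test functions and passing to the limit, the finiteness of all pairings being guaranteed by $w\in L_\infty((0,T),L_2(\M))$ (from $\|u(t)\|_\infty\leq\|u_0\|_\infty$, see \eqref{S5: L_infty contraction mild sol}, together with ${\rm Vol}(\M)<\infty$).

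Inserting $\psi$ and commuting $(-\Delta_g)^{\sigma/2}$ with $\partial_t$, the left-hand side becomes
$$
-\int_0^T\!\!\int_\M \partial_t V\,V\,d\mu_g\,dt=-\frac{1}{2}\int_0^T\frac{d}{dt}\|V(t)\|_2^2\,dt=\frac{1}{2}\|V(0)\|_2^2\geq 0,
$$
where we used $V(T)=0$ and the fundamental theorem of calculus for the $L_2(\M)$-valued $H^1$ function $V$. The right-hand side equals $\int_0^T\!\int_\M w\,\partial_t\psi\,d\mu_g\,dt=-\int_0^T\!\int_\M wW\,d\mu_g\,dt\leq 0$. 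Thus both sides vanish, giving $\int_0^T\!\int_\M (u-\hat u)(\Phi(u)-\Phi(\hat u))\,d\mu_g\,dt=0$; since the integrand is nonnegative and strictly positive wherever $u\neq\hat u$, we conclude $u=\hat u$ a.e. on $\M\times(0,T)$.

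I expect the main obstacle to be the admissibility of $\psi$: extending the weak identity \eqref{S4: weak sol} from smooth, compactly-in-time supported test functions to the time primitive of $W$, together with a clean justification of the time integration by parts. Both rest on the $H^1$-in-time regularity of $V=(-\Delta_g)^{\sigma/2}\psi$, which is exactly what the weak-solution regularity $\Phi(u)\in L_2((0,T),D((-\FD)^{\sigma/2}))$ supplies; the monotonicity of $\Phi$ and the self-adjointness identity \eqref{S5: Negative order Lap} then do the rest.
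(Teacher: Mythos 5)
Your proposal is correct and follows essentially the same route as the paper: both extend the weak formulation \eqref{S4: weak sol} (with $\omega=0$) to the backward time primitive $\phi(t)=\int_t^T(\Phi(u)-\Phi(\hat u))\,ds$ by approximation, identify the principal term as $\tfrac12\|\int_0^T(-\Delta_g)^{\sigma/2}(\Phi(u)-\Phi(\hat u))\,dt\|_2^2\geq 0$, and conclude from the strict monotonicity of $\Phi$ that $\int_0^T\int_\M(u-\hat u)(\Phi(u)-\Phi(\hat u))\,d\mu_g\,dt=0$ forces $u=\hat u$ a.e. Your remark that strict (not merely weak) monotonicity of $\Phi$ is what closes the final step is a point the paper glosses over, but otherwise the two arguments coincide.
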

\begin{proof}
The proof is basically the same as the porous medium equation, c.f. \cite[Theorem~5.3]{Vaz07}.
First observe that the weak formulation~\eqref{S4: weak sol} with $\omega=0$ still holds true for functions $\phi\in W^1_2([0,T), D((-\underline{\Delta}_F)^{\sigma/2}))$ with $\phi(T)=0$. Indeed, take a sequence 
$$
C^1_0([0,T), C^\infty_c(\M))\ni \phi_n \to \phi \quad \text{in}\quad  W^1_2([0,T), D((-\underline{\Delta}_F)^{\sigma/2})).
$$
Then
$$
\int_0^T \int_\M ( -\Delta_g)^{\sigma/2} \Phi(u)  ( -\Delta_g)^{\sigma/2}  \phi_n \, d\mu_g dt 
 =  \int^T_0\int_\M u \partial_t \phi_n \, d\mu_g dt+\int_\M  u_0\phi_n(0)\, d\mu_g.
 $$
Letting $n\to \infty$ on both sides yields the desired result. 

Assume, to the contrary, there are two $L_1$-weak solutions $u, \tilde{u}$. Define 
$$
\phi(t)=\int_t^T (\Phi(u)-\Phi(\tilde{u}))\, ds ,\quad 0\leq t\leq T.
$$
Then $\phi\in W^1_2((0,T), D((-\underline{\Delta}_F)^{\sigma/2}))$ with $\phi(T)=0$ is a valid test function.
Multiplying $\phi$ to the first line of \eqref{S1: FPME} and integrating over $[0,T)\times \M$ yields 
\begin{align*}
\int_0^T\int_\M &(-\Delta_g)^{\sigma/2} (\Phi(u)-\Phi(\tilde{u}))(t) [\int_t^T (-\Delta_g)^{\sigma/2} (\Phi(u)-\Phi(\tilde{u}))(s)\, ds] \, d\mu_g dt\\
&+ \int_0^T\int_\M (u-\tilde{u})(t) (\Phi(u)-\Phi(\tilde{u}))(t)  \, d\mu_g dt=0.
\end{align*}
The first term on the left equals
$$
\frac{1}{2}\int_\M  | \int_0^T (-\Delta_g)^{\sigma/2} (\Phi(u)-\Phi(\tilde{u}))(t) \, dt|^2 \,   d\mu_g.
$$
Therefore, 
\begin{align*}
\frac{1}{2}\int_\M  &| \int_0^T (-\Delta_g)^{\sigma/2} (\Phi(u)-\Phi(\tilde{u}))(t) \, dt|^2  \, d\mu_g\\
&+ \int_0^T\int_\M (u-\tilde{u})(t) (\Phi(u)-\Phi(\tilde{u}))(t)  \, d\mu_g dt=0.
\end{align*}
Note that $\Phi$ is monotone. So both terms are non-negative. This implies that $u=\tilde{u}$ a.e..
\end{proof}


One can further argue that the solution $u\in C([0,T),L_1(\M))$. 
To this end, we will first prove the following two lemmas.
\begin{lem}\label{S5: Lem-weak cont of u}
For any $\sigma\in (0,1]$, assume that $u$ is the unique $L_1-$weak solution to \eqref{S1: FPME}. Then for any $\phi\in C^1_0([0,T), D((-\underline{\Delta}_F)^{\sigma/2}))$,
$$
\lim\limits_{t\to 0^+}\int_\M u(t) \phi (t)\, d\mu_g = \int_\M u_0 \phi(0) \, d\mu_g.
$$
\end{lem}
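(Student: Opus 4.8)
The plan is to recast the weak formulation as an evolution equation in the dual of $V:=D((-\FD)^{\sigma/2})$ and to extract the weak continuity at $t=0$ from the resulting time-regularity. Equip $V$ with the graph norm of $(-\FD)^{\sigma/2}$, set $H:=L_2(\M)$, and identify $H$ with its own dual to obtain the Gelfand triple $V\hookrightarrow H\hookrightarrow V'$. Since $\M$ is compact, $V\hookrightarrow L_2(\M)\hookrightarrow L_1(\M)$, and dualizing gives $L_\infty(\M)\hookrightarrow V'$. By \eqref{S5: Negative order Lap} the element $(-\Delta_g)^\sigma\Phi(u)$ acts on $V$ through $v\mapsto\langle(-\Delta_g)^{\sigma/2}\Phi(u),(-\Delta_g)^{\sigma/2}v\rangle$, so the membership $\Phi(u)\in L_2((0,T),V)$ yields $(-\Delta_g)^\sigma\Phi(u)\in L_2((0,T),V')$ with $\|(-\Delta_g)^\sigma\Phi(u)\|_{V'}\leq\|(-\Delta_g)^{\sigma/2}\Phi(u)\|_2$.

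First I would test the weak formulation \eqref{S4: weak sol} (with $\omega=0$) against $\phi=\psi(t)v$, where $\psi\in C_0^\infty((0,T))$ and $v\in C_0^\infty(\M)$, to identify the distributional time derivative $\partial_t u=-(-\Delta_g)^\sigma\Phi(u)$ in the sense of $V'$-valued distributions on $(0,T)$; hence $\partial_t u\in L_2((0,T),V')$. Combined with $u\in L_\infty((0,T),V')$ — which follows from the uniform bound $\|u(t)\|_\infty\leq\|u_0\|_\infty$ in \eqref{S5: L_infty contraction mild sol} together with $L_\infty(\M)\hookrightarrow V'$ — the standard theory of vector-valued Sobolev functions shows that $u$ admits an absolutely continuous $V'$-valued representative, i.e. $u\in C([0,T),V')$. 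This is the crucial step, and the reason to pass through $V'$ rather than through $L_2(\M)$ is exactly that the $L_\infty$ bound makes $u\in L_\infty((0,T),V')$ available for free, sidestepping any Sobolev embedding for $u$ itself.

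Next I would identify the initial trace. For $\phi\in C^1_0([0,T),V)$ the product rule in the Gelfand triple gives $\int_0^T\langle\partial_t u,\phi\rangle_{V',V}\,dt=-\langle u(0),\phi(0)\rangle-\int_0^T\langle u,\partial_t\phi\rangle\,dt$, using $\phi(T)=0$. Substituting $\partial_t u=-(-\Delta_g)^\sigma\Phi(u)$ and comparing with \eqref{S4: weak sol} — which applies to such $\phi$ since $C^1_0([0,T),V)\subset W^1_2([0,T),V)$ and the weak formulation extends to test functions in $W^1_2([0,T),V)$ vanishing at $T$ by the argument already used in Lemma~\ref{S5: Lemma-unique} — every interior term cancels and one is left with $\langle u(0),\phi(0)\rangle=\langle u_0,\phi(0)\rangle$. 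Since $\phi(0)$ exhausts $V$ (take $\phi(t)=\rho(t)v$ with a scalar $\rho\in C^1_0([0,T))$, $\rho(0)=1$), this forces $u(0)=u_0$ in $V'$.

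Finally, for any $\phi\in C^1_0([0,T),V)$ the map $t\mapsto\langle u(t),\phi(t)\rangle_{V',V}$ is continuous, being the pairing of the $V'$-continuous curve $u$ with the $V$-continuous curve $\phi$; at every $t$ for which $u(t)\in L_\infty(\M)$ — a set of full measure — this pairing equals $\int_\M u(t)\phi(t)\,d\mu_g$. Letting $t\to 0^+$ and invoking $u(0)=u_0$ then gives $\int_\M u(t)\phi(t)\,d\mu_g\to\langle u_0,\phi(0)\rangle=\int_\M u_0\phi(0)\,d\mu_g$, which is the assertion. The main obstacle is the regularity-and-trace step of the second and third paragraphs, namely establishing $u\in C([0,T),V')$ and pinning down $u(0)=u_0$; the remainder is bookkeeping within the Gelfand triple.
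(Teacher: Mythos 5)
Your argument is correct, but it follows a genuinely different route from the paper's. The paper proves the lemma by restarting the equation at time $\varepsilon$ with data $u(\varepsilon)$, invoking uniqueness of $L_1$-weak solutions to identify the restarted solution with $u$ on $[\varepsilon,T)$, and then showing directly that the two tail integrals $\int_0^\varepsilon\int_\M (-\Delta_g)^{\sigma/2}\Phi(u)\,(-\Delta_g)^{\sigma/2}\phi\,d\mu_g\,dt$ and $\int_0^\varepsilon\int_\M u\,\partial_t\phi\,d\mu_g\,dt$ vanish as $\varepsilon\to 0^+$, so that the weak formulation on $[\varepsilon,T)$ converges to the one on $[0,T)$. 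You instead run the standard evolution-triple argument: from $\Phi(u)\in L_2((0,T),D((-\underline{\Delta}_F)^{\sigma/2}))$ you get $\partial_t u=-(-\Delta_g)^\sigma\Phi(u)\in L_2((0,T),V')$, the $L_\infty$-bound puts $u$ in $L_\infty((0,T),V')$, and the resulting absolutely continuous $V'$-valued representative together with the integration-by-parts identification of the trace gives $u(0)=u_0$ in $V'$. Your route buys more: it yields weak continuity of $t\mapsto u(t)$ into $V'$ on all of $[0,T)$ (not only at $t=0$) and does not use the uniqueness lemma or the comparison principle, only the weak formulation, the $L_\infty$-contraction \eqref{S5: L_infty contraction dis sol}--\eqref{S5: L_infty contraction mild sol}, and the extension of the class of test functions to $W^1_2([0,T),V)$ already carried out in Lemma~\ref{S5: Lemma-unique}; the price is the vector-valued Sobolev machinery (separability of $V$, Pettis measurability, the product rule in the Gelfand triple), plus the small checks that $C_0^\infty(\M)$ is dense in $V$ (so the distributional identity upgrades from $v\in C_0^\infty(\M)$ to all of $V$) and that the range of $\phi\mapsto\phi(0)$ exhausts $V$. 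The paper's restart argument is more elementary and stays entirely inside the weak formulation, at the cost of leaning on uniqueness and on Theorem~\ref{S2: other properties}(I)--(II). Both proofs share the same implicit caveat that, at this stage, $u(t)$ is only defined for a.e.\ $t$, so the limit is taken along the representative produced by the construction.
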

\begin{proof}
The proof of this lemma is partially based on Theorem~\ref{S2: other properties}(I) and (II) in Section~\ref{Section 6}.
Their proofs work for $L_1-$weak solutions and thus apply here.
Given any $\varepsilon\in (0,T)$ and $\phi\in C^1_0([0,T), D((-\underline{\Delta}_F)^{\sigma/2}))$, we consider the problem.
\begin{equation}
\label{S1: FPME-2}
\left\{\begin{aligned}
\partial_t v +(-\Delta_g)^\sigma \Phi(v)&=0   &&\text{on}&&\M\times (\varepsilon,T);\\
v(\varepsilon)&=u(\varepsilon)   &&\text{on}&&\M.
\end{aligned}\right.
\end{equation}
By  Theorem~\ref{S2: other properties}(I)  and the same process we used for \eqref{S1: FPME}, there exists a unique $L_1$-weak solution to \eqref{S1: FPME-2} such that
\begin{equation}
\label{S4: weak sol-2}
\int_\varepsilon^T \int_\M ( -\Delta_g)^{\sigma/2} \Phi(v)  ( -\Delta_g)^{\sigma/2}  \phi \, d\mu_g dt 
 =  \int^T_\varepsilon\int_\M v \partial_t \phi \, d\mu_g dt+\int_\M  u(\varepsilon)\phi(\varepsilon)\, d\mu_g.
\end{equation}
It follows from Lemma~\ref{S5: Lemma-unique} that $v(t)=u(t)$ a.e. for $t\in [\varepsilon,T)$. Because
\begin{align*}
 | \int^\varepsilon_0 \int_\M ( -\Delta_g)^{\sigma/2} \Phi(u)  ( -\Delta_g)^{\sigma/2}  \phi \, d\mu_g \, dt| 
\leq   C  \int^\varepsilon_0 \|( -\Delta_g)^{\sigma/2} \Phi(u)\|_2 \,  dt \to 0
\end{align*}
and
\begin{align*}
|\int^\varepsilon_0\int_\M u \partial_t \phi \, d\mu_g \, dt| \leq  C \int^\varepsilon_0 \|u(t)\|_2 \,  dt  \to 0
\end{align*}
as $\varepsilon\to 0$ due to Theorem~\ref{S2: other properties}(II),
the asserted results follows by letting $\varepsilon\to 0$ in \eqref{S4: weak sol-2}.
\end{proof}

Next we will show the $L_1$-contraction property of $L_1$-weak solutions to \eqref{S1: FPME}.
\begin{lem}\label{S7: Lem-L1-contraction}
Suppose that $u,\hat{u}$ are $L_1$-weak solutions to \eqref{S1: FPME} with respect to the initial data $u_0,\hat{u}_0$. Then for any $t\geq 0$
$$
\| u(t)- \hat{u}(t)\|_1 \leq \| u_0- \hat{u}_0\|_1.
$$
\end{lem}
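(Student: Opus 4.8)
The plan is to derive the contraction by letting $\omega\to 0^+$ in the already-established $L_1$-contraction \eqref{S5: L1-contraction: L1 weak sol} for the mild solutions of the $\omega$-problem \eqref{S5: FPME-omega}, which holds for every $\omega>0$ and every $t\ge 0$. By the uniqueness of $L_1$-weak solutions (Lemma~\ref{S5: Lemma-unique}), the solutions $u,\hat u$ in the statement coincide with the weak solutions produced as limits of mild solutions, so the approximation properties obtained in the course of the construction are at our disposal for both initial data.

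First I would fix a sequence $\omega_k\to 0^+$, common to the two initial data after extracting a further subsequence, along which \eqref{S4:converg-2} gives
\[
u_{\omega_k}(t)\rightharpoonup u(t),\qquad \hat u_{\omega_k}(t)\rightharpoonup \hat u(t)\quad\text{in }L_{m+1}(\M)
\]
for a.e. $t\in(0,T)$, while \eqref{S5: L1-contraction: L1 weak sol} yields $\|u_{\omega_k}(t)-\hat u_{\omega_k}(t)\|_1\le\|u_0-\hat u_0\|_1$ for every $k$. Since $u_0,\hat u_0\in L_\infty(\M)$ and ${\rm Vol}(\M)<\infty$, the right-hand side is finite.

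The key step is the lower semicontinuity of the $L_1$-norm under this weak convergence. Because ${\rm Vol}(\M)<\infty$ we have $L_{m+1}(\M)\hookrightarrow L_1(\M)$ and $L_\infty(\M)\hookrightarrow L_{(m+1)/m}(\M)=(L_{m+1}(\M))^*$, so weak convergence in $L_{m+1}(\M)$ forces $\int_\M (u_{\omega_k}(t)-\hat u_{\omega_k}(t))\phi\,d\mu_g\to\int_\M (u(t)-\hat u(t))\phi\,d\mu_g$ for every $\phi\in L_\infty(\M)$. Testing with $\|\phi\|_\infty\le 1$ and using the duality formula $\|w\|_1=\sup\{\int_\M w\phi\,d\mu_g:\phi\in L_\infty(\M),\ \|\phi\|_\infty\le 1\}$ for $w=u(t)-\hat u(t)\in L_1(\M)$, I would obtain
\[
\|u(t)-\hat u(t)\|_1\le\liminf_{k\to\infty}\|u_{\omega_k}(t)-\hat u_{\omega_k}(t)\|_1\le\|u_0-\hat u_0\|_1
\]
for a.e. $t$. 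As an alternative to this duality argument, one may note that $\Phi(u_{\omega_k})(t)\to\Phi(u)(t)$ in $L_2(\M)$ by \eqref{S4:converg 3} and that $\beta=\Phi^{-1}$ is continuous, so $u_{\omega_k}(t)\to u(t)$ pointwise a.e. along a subsequence, and then invoke Fatou's lemma.

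The main obstacle I anticipate is purely bookkeeping: arranging a single subsequence $(\omega_k)_k$ along which both families converge weakly for a.e. $t$, and cleanly justifying the weak lower semicontinuity of the $L_1$-norm; once this is in place the contraction is immediate. The estimate for a.e. $t$ is all that the subsequent continuity argument requires, and it then upgrades to every $t\ge 0$ once the continuity $u,\hat u\in C([0,T),L_1(\M))$ is in force, the value $t=0$ being covered by the weak continuity of Lemma~\ref{S5: Lem-weak cont of u} together with the uniform bound \eqref{S5: L_infty contraction mild sol}.
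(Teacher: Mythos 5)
Your proposal is correct and follows essentially the same route as the paper: the paper's proof is literally ``an analogous argument to \eqref{S5: L1-contraction: L1 weak sol}'', i.e.\ pass the contraction for the approximating solutions to the limit by pairing with a bounded (sign-type) test function, which is exactly your duality/weak-lower-semicontinuity step applied to the $\omega_k\to 0^+$ family identified with $u,\hat u$ via Lemma~\ref{S5: Lemma-unique}. Your added care about a common subsequence and the a.e.-$t$ versus all-$t$ issue is consistent with how the paper later uses the lemma.
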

\begin{proof}
This assertion follows by an analogous argument to \eqref{S5: L1-contraction: L1 weak sol}.
\end{proof}

Now we are ready to prove the continuity of $u$.
Take a sequence 
$$
D((-\underline{\Delta}_F)^{\sigma/2})\cap L_\infty(\M)\ni u_{0,n} \to u_0 \quad \text{in } L_1(\M).
$$
We denote by $u_n$ the unique $L_1$-weak solution to \eqref{S1: FPME} with initial data $u_{0,n}$. 

In particular, Lemma~\ref{S5: Lem-weak cont of u} shows that  
$$
\lim\limits_{t\to 0^+}\int_\M u_n(t) u_{0,n}  \, d\mu_g = \int_\M  u^2_{0,n} \, d\mu_g.
$$
Now we look at 
$$
\langle u_n(t)-u_{0,n}, u_n(t)-u_{0,n} \rangle= \|u_n(t)\|_2^2 + \|u_{0,n}\|_2^2 - 2\langle u_n(t) , u_{0,n} \rangle.
$$
Taking $\limsup\limits_{t\to 0^+}$ on both sides yields
$$
\limsup\limits_{t\to 0^+}\langle u_n(t)-u_{0,n}, u_n(t)-u_{0,n} \rangle= \limsup\limits_{t\to 0^+}\|u_n(t)\|_2^2 - \|u_{0,n}\|_2^2 \leq 0.
$$
The last inequality is due to Theorem~\ref{S2: other properties}(II). Note that the proof for Theorem~\ref{S2: other properties}(II) is completely independent of the convergence of the solutions at $t=0$.
This implies that
$$
\lim\limits_{t\to 0^+}\langle u_n(t)-u_{0,n}, u_n(t)-u_{0,n} \rangle= 0,
$$
which shows that $u_n$ is right continuous at  $t=0$  in the $L_2(\M)$-norm and thus in the $L_1(\M)$-norm.
Now we have
\begin{align}
\notag &\lim\limits_{t\to 0^+}\|u(t)-u_0\|_1 \\
\notag \leq & \lim\limits_{n\to \infty} \lim\limits_{t\to 0^+}(\|u(t)-u_n(t)\|_1 + \|u_n(t)-u_{0,n}\|_1 +\|u_{0,n}-u_0\|_1)\\
\label{S5: strong cont est 1}
\leq & 2 \lim\limits_{n\to \infty} \|u_{0,n}-u_0\|_1 +  \lim\limits_{n\to \infty} \lim\limits_{t\to 0^+}\|u_n(t)-u_{0,n}\|_1=0
\end{align}
The inequality in \eqref{S5: strong cont est 1} follows from Lemma~\ref{S7: Lem-L1-contraction}. 
This shows the right continuity of $u$ at  $t=0$  in  the $L_1(\M)$-norm. 
To show the interior temporal regularity, first note that for every $\varepsilon>0$, there exists a $\tau>0$ such that $0<h<\tau$ implies 
$$
\|u(h)-u(0)\|_1\leq \varepsilon.
$$
We apply Lemmas~\ref{S5: Lemma-unique} and \ref{S7: Lem-L1-contraction} again and obtain for every $t>0$ and $0<h<\tau$
$$
\|u(t+h)-u(t)\|_1 \leq \|u(h)-u(0)\|_1\leq \varepsilon.
$$
This shows that 
$
u\in C([0,T),L_1(\M)),
$
and thus $u$ is indeed a weak solution to \eqref{S1: FPME}.

\begin{theorem}
\label{S6: global weak sol}
Let $(\M,g)$ be  an $(n+1)$-dimensional compact conical manifold. 
Assume that $m>0$ with $m\neq 1$. 
Then for any $T>0$ and every $u_0\in  L_\infty(\M)$, \eqref{S1: FPME} has a unique  weak solution on the  interval $[0,T)$. Moreover, the solution depends continuously on $u_0$ in the norm $C([0,T), L_1(\M))$.
\end{theorem}
\begin{proof}
The continuous dependence of the weak solution on $u_0$ is a direct consequence of Lemma~\ref{S7: Lem-L1-contraction}.
\end{proof}
 
\begin{remark}
\begin{itemize}
\item[] 
\item[(i)] Well-posedness for $L_2$-initial data is related to the smoothing effect, which can be established via some nonlocal logarithmic Sobolev inequality based on  \eqref{S3.4: Sobolev-Poincare}. The idea follows from a generalization of the argument in \cite{RoidosShaoPre1}, which will be discussed in a future work.
\item[(ii)]The method used in this section can be applied to the (fractional) porous medium equation on general complete Riemannian manifolds without any difficulty to establish the uniqueness and existence of global weak solution. See \cite[Theorem~2.3]{RoidosShaoPre1}.
\end{itemize}
\end{remark}


\section{Some Other Properties of the Fractional Porous Medium Equation}\label{Section 6}

\begin{theorem}
\label{S2: other properties}
Under the same assumptions as in Theorem~\ref{S6: global weak sol}, the weak solutions to \eqref{S1: FPME} satisfy the following properties.
\begin{itemize}
\item[(I) ] {\em Comparison principle: }If $u, \hat{u}$ are the unique weak solutions to \eqref{S1: FPME} with initial data $u_0, \hat{u}_0$, respectively, then $u_0\leq \hat{u}_0$ a.e. implies $u \leq \hat{u}$ a.e.
\item[(II) ] {\em $L_p$-contraction: }For all $0 \leq t_1 \leq t_2$ and $1\leq p\leq \infty$
$$
\|u(t_2)\|_p \leq \|u(t_1)\|_p.
$$
\item[(III) ] {\em Conservation of mass: } For  all $t\geq 0$, it holds  that
$$
\int_\M u(t) \, d\mu_g = \int_\M u_0 \, d\mu_g.
$$
\end{itemize}
\end{theorem}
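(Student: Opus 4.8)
The plan is to prove all three properties first for the implicit time discretization~\eqref{S4: DPME} of the regularized flow~\eqref{S5: FPME-omega}, and then to pass to the limit in the discretization parameter $n\to\infty$ and afterwards in $\omega\to 0^+$, following the construction of the weak solution. Throughout I write $u_k:=u_{n,k;\omega}$, $\lambda:=T/n$, and use that each step solves the resolvent identity $u_k+\lambda(\omega-\Delta_g)^\sigma\Phi(u_k)=u_{k-1}$ with $u_k\in L_\infty(\M)$ and $(\omega-\Delta_g)^\sigma\Phi(u_k)\in L_\infty(\M)$ by~\eqref{S5: L_infty contraction dis sol}, so that $\Phi(u_k)\in D((-\FD)^\sigma)$ and every pairing below is legitimate in $L_2(\M)$.

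For the comparison principle~(I), I would show that the discrete resolvent is order preserving. Let $u_k,\hat u_k$ be the steps generated by $u_0\le\hat u_0$; subtracting the two resolvent identities and testing against a smooth nondecreasing normal contraction $p_\varepsilon$ approximating $\mathrm{sign}^+$ with $p_\varepsilon(0)=0$ gives, with $G:=\Phi(u_k)-\Phi(\hat u_k)$,
$$
\int_\M(u_k-\hat u_k)\,p_\varepsilon(G)\,d\mu_g+\lambda\int_\M(\omega-\Delta_g)^\sigma G\;p_\varepsilon(G)\,d\mu_g=\int_\M(u_{k-1}-\hat u_{k-1})\,p_\varepsilon(G)\,d\mu_g.
$$
The middle integral is nonnegative because $(\omega-\Delta_g)^\sigma$ is sub-Markovian (Proposition~\ref{S4: Lap-frac semigroup}), while the right-hand side is $\le 0$ once $u_{k-1}\le\hat u_{k-1}$; letting $\varepsilon\to 0$ and using that $\Phi$ is strictly increasing yields $\int_\M(u_k-\hat u_k)^+\,d\mu_g\le 0$, so $u_k\le\hat u_k$. (Equivalently, this is the T-accretivity estimate underlying \cite[Theorem~1]{BreStr73}.) Induction on $k$, the convergence $u_{n;\omega}(t)\to u_\omega(t)$ in $L_1(\M)$, and the a.e. convergence provided by~\eqref{S4:converg 3} as $\omega_k\to 0^+$ then propagate the ordering to $u\le\hat u$.

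For the $L_p$-contraction~(II) I would test the discrete identity against $|\Phi(u_k)|^{q-2}\Phi(u_k)$ with $q:=(p+m-1)/m$, which lies in $(1,\infty)$ for $1<p<\infty$. Since $-(\omega-\Delta_g)^\sigma$ generates a contraction $C_0$-semigroup on $L_q(\M)$ (Proposition~\ref{S4: Lap-frac semigroup}), the operator is accretive there, so by the Lumer--Phillips duality $\int_\M(\omega-\Delta_g)^\sigma\Phi(u_k)\,|\Phi(u_k)|^{q-2}\Phi(u_k)\,d\mu_g\ge 0$; recalling $\Phi(u)=|u|^{m-1}u$ and applying Hölder's inequality to the two remaining terms collapses the estimate to $\|u_k\|_p\le\|u_{k-1}\|_p$. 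Iterating and passing to the limit gives $\|u(t_2)\|_p\le\|u(t_1)\|_p$ for a.e. $t_1\le t_2$: the uniform $L_\infty$-bound~\eqref{S5: L_infty contraction dis sol} upgrades the $L_1$-convergence of the discretization to $L_p$-convergence for $1\le p<\infty$ on the finite-measure manifold, and~\eqref{S4:converg 3} together with dominated convergence transfers the norms through $\omega_k\to 0$. As $u\in C([0,T),L_1(\M))$ is bounded in $L_\infty(\M)$, interpolation makes $t\mapsto\|u(t)\|_p$ continuous, so the inequality holds for every $t_1\le t_2$; the endpoints $p=1$ and $p=\infty$ then follow by letting $p\downarrow 1$ and $p\uparrow\infty$ and using that $\|f\|_p\to\|f\|_1$, $\|f\|_p\to\|f\|_\infty$ for $f\in L_\infty(\M)$ since $\mathrm{Vol}(\M)<\infty$.

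The delicate point is conservation of mass~(III), and I expect it to be the main obstacle. Everything hinges on the constant function being annihilated by the fractional Laplacian, $(-\Delta_g)^{\sigma/2}\mathbf 1=0$. On the compact conical manifold one has $\Delta_g\mathbf 1=0$ and $\mathbf 1\in D(\FD)$ --- the constants occupy the $q^-_0=0$ asymptotic layer of $D(\underline\Delta_{\max})$ that belongs to the Friedrichs domain, cf. the domain characterization in the appendix --- whence the resolvent formula gives $(\omega-\Delta_g)^{\sigma/2}\mathbf 1=\omega^{\sigma/2}\mathbf 1$ and $(\omega-\Delta_g)^\sigma\mathbf 1=\omega^\sigma\mathbf 1$, so in particular $\mathbf 1\in D((-\FD)^{\sigma/2})$ with $(-\Delta_g)^{\sigma/2}\mathbf 1=0$. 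Granting this, by the extended test-function class of Lemma~\ref{S5: Lemma-unique} I may insert $\phi(t,\cdot)=\eta(t)\mathbf 1$ with $\eta\in C^1([0,T])$, $\eta(T)=0$, into the weak formulation~\eqref{S4: weak sol} at $\omega=0$; the entire left-hand side vanishes and there remains $\int_0^T\big(\int_\M u(t)\,d\mu_g\big)\eta'(t)\,dt+\eta(0)\int_\M u_0\,d\mu_g=0$, forcing $t\mapsto\int_\M u(t)\,d\mu_g$ to be constant and, by continuity of $u$ in $L_1(\M)$, equal to $\int_\M u_0\,d\mu_g$ for every $t$. The same computation at the discrete level reads $\int_\M u_k\,d\mu_g-\int_\M u_{k-1}\,d\mu_g=-\lambda\omega^\sigma\int_\M\Phi(u_k)\,d\mu_g$, exhibiting a mass loss of order $O(\omega^\sigma)$ that is recovered in the limit $\omega\to 0^+$; this is precisely the mechanism that keeps mass conserved for all $m>0$, in contrast with the Euclidean case.
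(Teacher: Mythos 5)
Your proposal is correct and follows essentially the same route as the paper: establish (I) and (II) at the level of the implicit discretization \eqref{S4: DPME} via the $T$-accretivity and $L_p$-estimates of Br\'ezis--Strauss (which the paper simply cites as \cite[Propositions~4 and 5]{BreStr73} where you re-derive them by testing against $p_\varepsilon(\Phi(u_k)-\Phi(\hat u_k))$ and $|\Phi(u_k)|^{q-2}\Phi(u_k)$), then pass to the limit in $n$ and in $\omega$; and obtain (III) by inserting $\phi\equiv 1$ into the weak formulation using $\mathbf 1\in D(\underline{\Delta}_F)$ and $(-\Delta_g)^{\sigma/2}\mathbf 1=0$, exactly as in the paper. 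The only cosmetic deviations are that the paper gets the $p=1,\infty$ cases of (II) directly from the comparison-type inequality rather than by letting $p\downarrow 1$, $p\uparrow\infty$, and it passes the ordering in (I) to the limit by a short contradiction argument rather than by a.e.\ convergence; both variants are sound.
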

\begin{proof}
(I) Suppose $u_{n,k;\omega}$ and $\hat{u}_{n,k;\omega}$ are the solutions obtained in the $k$-th step of the discretized problem~\eqref{S4: DPME} with respect to the initial   data  $u_0, \hat{u}_0$, respectively.
Then \cite[Proposition~5]{BreStr73} implies that
\begin{equation}
\label{S4: contraction PME}
\int_\M [u_{n,k+1;\omega}- \hat{u}_{n,k+1;\omega}]^+ \, d\mu_g \leq \int_\M [u_{n,k;\omega}- \hat{u}_{n,k;\omega}]^+ \, d\mu_g \leq \int_\M [u_0- \hat{u}_0]^+ \, d\mu_g.
\end{equation}
We immediately conclude from the above inequality that $u_0\leq \hat{u}_0$ a.e. implies $u_{n;\omega} \leq \hat{u}_{n,\omega}$ a.e. 

Suppose that, for some $t$, $\mathsf{m}(\{u_\omega(t)>\hat{u}_\omega(t)\})>0$, where $\mathsf{m}$ is the measure on $(\M,g)$. Pick $f\in L_\infty(\M)$ such that $f=1$ on $\{u_\omega(t)>\hat{u}_\omega(t)\}$, and $f=0$ elsewhere. Then
\begin{align*}
0<\int_\M [u_\omega (t) - \hat{u}_\omega (t)]^+ \, d\mu_g &=\int_\M (u_\omega (t)- \hat{u}_\omega (t))f \, d\mu_g\\
= & \lim\limits_{n\to \infty} \int_\M (u_{n;\omega} (t)- \hat{u}_{n;\omega} (t))f \, d\mu_g \\
\leq & \lim\limits_{n\to \infty} \int_\M [u_{n;\omega} (t)- \hat{u}_{n;\omega} (t)]^+ \, d\mu_g\\
\leq & \int_\M [u_0- \hat{u}_0]^+ \, d\mu_g=0,
\end{align*}
as $u_{n;\omega} (t)- \hat{u}_{n;\omega} (t) \to u_\omega (t) - \hat{u}_\omega (t)$ in $L_1(\M)$. A  contradiction. Applying the same argument to $u(t)$ and $\hat{u}(t)$ yields the desired result.

(II) By a similar argument to part I, we have 
$$
\int_\M [u(t_2)- \hat{u}(t_2)]^+ \, d\mu_g \leq \int_\M [u(t_1)- \hat{u}(t_1)]^+ \, d\mu_g,\quad t_2>t_1.
$$
This readily  implies that 
$$
\|u(t_2)\|_1 \leq 	\|u(t_1)\|_1 \quad \text{and} \quad \|u(t_2)\|_\infty \leq 	\|u(t_1)\|_\infty.
$$
Hence for the remaining case $1<p<\infty$, it suffices to consider $t_1=0$, for otherwise we can replace $u_0$ by $u(t_1)$ in \eqref{S1: FPME}.
By \cite[Proposition~4]{BreStr73}, the solution of the discretized problem~\eqref{S4: DPME} satisfies
$$
\|u_{n,k+1;\omega}\|_p \leq \|u_{n,k;\omega}\|_p \leq \| u_0\|_p,\quad 1\leq p\leq \infty.
$$
Taking limit in the above inequality yields (II) when $1<p<\infty$.


(III) Since constant functions belong to $D(\FD)$ and thus are in $D((-\FD)^\sigma)$, we can take $\phi\equiv 1$ as a test function in \eqref{S4: weak sol} with $\omega=0$, i.e.
\begin{align*}
\int_\M [u(t_2)-u(t_1)] \phi \, d\mu_g = - \int_{t_1}^{t_2} \int_\M (-\Delta_g)^{\sigma/2} \Phi(u ) ( -\Delta_g)^{\sigma/2} \phi  \, d\mu_g\, dt=0.
\end{align*}
This proves the conservation of mass.
\end{proof}

\begin{remark}
The conservation of mass holds for all $m>0$, and  this is an essential difference from the extinction phenomenon observed for the (fractional) porous medium equation on $\R^N$ and in bounded domains  with vanishing Dirichlet boundary condition, cf. \cite{BenCra8102} for the finite time extinction of solutions for the porous medium equation and \cite{PalRodVaz12} for the fractional porous medium equation.
\end{remark}

\begin{remark}
The proof of conservation of mass actually show that if $(\M,g)$ is an anti-cone then mass is actually conserved on both components of $\M$. This means the nonlocal diffusion of \eqref{S1: FPME} cannot transport any mass from one component to another through the conical ends.
\end{remark}

\section{Strong Solutions}\label{Section 7}

\begin{definition}\label{Def: Strong Sol}
We call a weak solution $u$  to \eqref{S1: FPME} on $[0,T)$ a strong solution if in addition
$$
\partial_t u \in L_{\infty,loc}((0,T), L_1(\M)))
$$
and
$$
(-\Delta_g)^\sigma \Phi(u) \in L_{\infty,loc}((0,T),L_1(\M)).
$$
\end{definition}

We will use a modification of the method in \cite{PalRodVaz11,PalRodVaz12} to prove that     solutions to \eqref{S1: FPME} are indeed strong solutions.

We first consider the case $m\neq 1$.
By \cite[Theorems 1 and 2]{BenCra81}, we have
\begin{itemize}
\item[(i)] $\partial_t u $ is a Radon measure;
\item[(ii)] $\displaystyle \limsup\limits_{h\to 0^+} \frac{\| u(t+h)-u(t)\|_1}{h}\leq \frac{2}{|m-1|t}\|u_0\|_1$.
\end{itemize}

For any $f\in L_{1,loc}(0,T)$, define the Steklov average of $f$ by
$$
f^h(t):=\frac{1}{h}\int^{t+h}_t f(s) \, ds,
$$
and 
$$
\delta^h f (t):=\partial_t f^h(t)= \frac{f(t+h)-f(t)}{h}\quad \text{a.e.}
$$
We can write the weak formulation \eqref{S4: weak sol} with $\omega=0$ as
\begin{equation}
\label{S3: weak sol2}
\int_0^T \int_\M (\delta^h u )\phi \, d\mu_g\,dt 
+ \int_0^T \int_\M  (-\Delta_g)^{\sigma/2} (\Phi(u))^h (-\Delta_g)^{\sigma/2} \phi \, d\mu_g\,dt =0.
\end{equation}

For any $[\tau,S]\subset (0,T)$, we choose $\zeta\in C_0^1((0,T),[0,1])$ such that $\zeta\equiv 1$ on $[\tau,S]$ and vanishes outside $[\tau^\prime, S^\prime]$ for some $[\tau^\prime, S^\prime]\subset (0,T)$ with $[\tau,S]\subset (\tau^\prime, S^\prime)$.
Let us take $\phi=\zeta \delta^h (\Phi(u))$.
Then \eqref{S3: weak sol2} yields
\begin{align}
\label{S3: weak sol3}
\notag &\quad\int_0^T \int_\M \zeta (\delta^h u) \delta^h (\Phi(u)) \, d\mu_g\,dt \\
&+ \int_0^T \int_\M \zeta  (-\Delta_g)^{\sigma/2} (\Phi(u))^h (-\Delta_g)^{\sigma/2} \partial_t(\Phi(u))^h \, d\mu_g\,dt
=0.
\end{align}
Since  $(\delta^h u)(\delta^h \Phi(u))\geq c (\delta^h (|u|^{(m-1)/2}u))^2$, cf. \cite[Section~5.3]{PalRodVaz11},
the first term on the left hand side of \eqref{S3: weak sol3} satisfies that
$$
\int_0^T \int_\M \zeta (\delta^h u ) \delta^h (\Phi(u)) \, d\mu_g\,dt \geq c \int_0^T \int_\M \zeta (\delta^h (|u|^{(m-1)/2}u))^2 \, d\mu_g\,dt.
$$
The second term on the left hand side of \eqref{S3: weak sol3} can be estimated as follows.
\begin{align}
\notag & |\int_0^T \int_\M \zeta  (-\Delta_g)^{\sigma/2} (\Phi(u))^h (-\Delta_g)^{\sigma/2} \partial_t(\Phi(u))^h \, d\mu_g\,dt | \\
\notag\leq &   \frac{1}{2}\int_{\tau^\prime}^{S^\prime} \int_\M |\zeta^\prime | [ (-\Delta_g)^{\sigma/2} (\Phi(u))^h ]^2\, d\mu_g\,dt\\
\label{S7: est for strong sol}
\leq & C \int_{\tau^\prime}^{S^\prime} \int_\M   [ (-\Delta_g)^{\sigma/2} (\Phi(u))^h ]^2\, d\mu_g\,dt  .
\end{align}
Recall a weak solution $\Phi(u)\in L_{\infty,loc}((0,T), D((-\underline{\Delta}_F)^{\sigma/2}))$. By Minkowski's inequality, 
\begin{align*}
(\int_\M   [ (-\Delta_g)^{\sigma/2} (\Phi(u))^h ]^2\, d\mu_g )^{1/2}
& = \frac{1}{h } (\int_\M   (\int_t^{t+h} (-\Delta_g)^{\sigma/2} \Phi(u)(s)\,ds)^2\, d\mu_g)^{1/2}\\
&\leq \frac{1}{h } \int_t^{t+h} (\int_\M [ (-\Delta_g)^{\sigma/2} \Phi(u)(s)]^2\, d\mu_g)^{1/2}     \, ds\\
&\leq \sup_{t\in [\tau^\prime, S^\prime]}(\int_\M [ (-\Delta_g)^{\sigma/2} \Phi(u)(t)]^2\, d\mu_g)^{1/2} .
\end{align*}
Thus \eqref{S7: est for strong sol} is uniformly bounded in $h$. 
This implies that  
$$
\| \delta^h (|u|^{(m-1)/2}u)\|_{L_2([\tau,S],L_2(\M)) }
$$ 
is uniformly bounded in $h$,
and thus
$$
\partial_t (|u|^{(m-1)/2}u)\in L_{2,loc}((0,T), L_2(\M) ).
$$
Since (i) implies $u\in BV((\tau,T), L_1(\M))$ for any $\tau>0$, it then follows from \cite[Theorem~1.1]{BenGar95} that
$$
\partial_t u \in L_{\infty,loc}((0,T), L_1(\M)) \quad \text{with} \quad \|\partial_t u(t))\|_1 \leq \frac{2}{|m-1|t}\|u_0\|_1.
$$
The last inequality is derived from (ii). Since $(-\Delta_g)^\sigma \Phi(u)= -\partial_t u$ in the distributional sense, the above conclusion reveals that
$$
(-\Delta_g)^\sigma \Phi(u) \in L_{\infty,loc}((0,T),L_1(\M)).
$$
\begin{theorem}
\label{S7: Thm: strong solution}
Let $(\M,g)$ be  an $(n+1)$-dimensional compact conical manifold. 
Assume that $m>0$ with $m\neq 1$ and $  u_0\in  L_\infty(\M)$. 
Then for any $T>0$, \eqref{S1: FPME} with $\sigma\in (0,1)$ has a unique strong solution on $[0,T)$.
\end{theorem}

For the remaining case $m=1$, by \cite[Theorem 4.1.4]{Pazy} and Proposition~\ref{S4: Lap-frac semigroup}, we obtain the following theorem.
\begin{theorem}
\label{S7: Thm: strong solution m=1}
Let $(\M,g)$ be  an $(n+1)$-dimensional compact conical manifold. 
Assume that $m=1$ and $1<p<\infty$. 
Then for any $T>0$ and every $u_0\in L_p(\M)$, \eqref{S1: FPME} with $\sigma\in (0,1)$ has a unique  solution $u$ on $[0,T)$ such that
\begin{align*}
u\in & C^1([0,T),L_p(\M))\cap C([0,T),D((-\underline{\Delta}_{F,p})^\sigma))
\end{align*}
When $u_0\in L_\infty(\M)$, this solution is strong, depends continuously on $u_0$ in the the norm $C([0,T),L_1(\M))$ and satisfies (I)-(III) of Theorem~\ref{S2: other properties}.
\end{theorem}
\begin{proof}
The unique solution to \eqref{S1: FPME} when $m=1$ is given by 
$$
u(t)=e^{-t(-\Delta_g)^\sigma} u_0.
$$
Then the uniqueness, regularity and continuous dependence of $u$ on $u_0$ follows immediately. (I) and (II) of Theorem~\ref{S2: other properties} is a direct consequence of Proposition~\ref{S4: Lap-frac semigroup}. (III) of Theorem~\ref{S2: other properties} is obtained by multiplying both sides of \eqref{S1: FPME} by $\phi\equiv 1$ and integrating over $[t_1, t_2] \times \M$.
\end{proof}


\section{The Porous Medium Equation on conical Manifolds}\label{Section 8}

When $\sigma=1$, \eqref{S1: FPME} reduces to the usual porous medium equation
\begin{equation}
\label{S8: PME}
\left\{\begin{aligned}
\partial_t u - \Delta_g (|u|^{m-1}u  )&=0   &&\text{on}&&\M\times (0,\infty);\\
u(0)&=u_0   &&\text{on}&&\M.
\end{aligned}\right.
\end{equation}
In this last section, we will study the well-posedness and various properties of \eqref{S8: PME} for $m>0$. These results generalize the study of the porous medium equation on manifolds with singularities in \cite{RoiSch15, RoiSch17, Shao16}.

Since the proofs for the  results in this section are analogous to the fractional porous medium equation, only necessary modifications of the proofs will be presented.

\begin{theorem}
\label{Thm: Porous Medium}
Let $(\M,g)$ be  an $(n+1)$-dimensional compact conical manifold. 
Assume that $m>0$ with $m\neq 1$.  
Then for any $T>0$ and every $ u_0\in  L_\infty(\M)$, \eqref{S8: PME} has a unique strong solution on $[0,T)$ in the sense that
\begin{itemize}
\item[(i)] $u\in L_\infty((0,T), L_{m+1}(\M))$, and 
\item[(ii)] $\Phi(u)\in L_2((0,T), H^1(\M)) \cap L_{\infty,loc}((0,T), H^1(\M))$, and
\item[(iii)] $u\in C([0,T), L_1(\M))$, and
\end{itemize}
for every $\phi\in C^1_0([0,T), C_c^\infty(\M))$, it holds that
\begin{equation}
\label{S8: weak sol of PME}
\int_0^T   \langle \nabla \Phi(u) ,  \nabla  \phi \rangle_\mathcal{T} \,   dt 
 =  \int^T_0\int_\M u \partial_t \phi \, d\mu_g\,  dt+\int_\M  u_0\phi(0)\, d\mu_g.
\end{equation}
In addition,    
\begin{equation}
\label{S8: classic sol of PME 1}
\partial_t u \in L_{\infty,loc}((0,T), L_1(\M))
\end{equation}
and
\begin{equation}
\label{S8: classic sol of PME 2}
 \Delta_g \Phi(u) \in L_{\infty,loc}((0,T),L_1(\M)).
\end{equation}
Moreover the solutions satisfy the following properties.
\begin{enumerate}
\item  {\em Continuous dependence on the  initial data:} The solution depends continuously on $u_0$ in the norm $C([0,T), L_1(\M))$.
\item  {\em Comparison principle:} If $u, \hat{u}$ are the unique strong solutions to \eqref{S8: PME}  with initial data $u_0, \hat{u}_0$, respectively, then $u_0\leq \hat{u}_0$ a.e. implies $u \leq \hat{u}$ a.e.
\item {\em $L_p$-contraction:} For all $0 \leq t_1 \leq t_2$ and $1\leq p\leq \infty$
$$
\|u(t_2)\|_p \leq \|u(t_1)\|_p.
$$
\item  {\em Conservation of mass:} For  all $t\geq 0$, it holds  that
$$
\int_\M u(t) \, d\mu_g = \int_\M u_0 \, d\mu_g.
$$
\end{enumerate}
\end{theorem}
\begin{proof}
Note that by the divergence theorem
$$
\langle -\Delta_g  u ,v \rangle = \langle \nabla u, \nabla v \rangle_\mathcal{T}   
$$
for all $u\in D( \FD ), \, v\in W^1_2(\M)$.
So following the same approach for the fractional porous medium equation in Section~\ref{Section 5}, one can show, with $(-\Delta_g)^{\sigma/2}$ being  replaced by $\nabla$, that \eqref{S8: PME}  has a unique solution satisfying (i)-(iii) and \eqref{S8: weak sol of PME}.

\eqref{S8: classic sol of PME 1} can be proved along the same line of argument in Section~\ref{Section 7}. Once we have \eqref{S8: classic sol of PME 1}, note that \eqref{S8: weak sol of PME} implies that 
$$
\int^S_\tau\int_\M  \partial_t u \phi \, d\mu_g \, dt+ \int_\tau^S   \langle \nabla \Phi(u) ,  \nabla  \phi \rangle_\mathcal{T} \,   dt 
 =  0 
$$
for any $\tau,S\in [0,T)$
Thus $-\partial_t u(t)$ is indeed the weak divergence of $\nabla u(t)$ for a.a. $t$. We can conclude that \eqref{S8: classic sol of PME 2} holds.

Properties (1)-(3) can be established via the same argument we used for \eqref{S1: FPME}.
 
The proof of  (4) is the same as that of Theorem~\ref{S2: other properties}(III).
\end{proof}

When $m=1$, we can apply \cite[Theorem 4.1.4]{Pazy} and Proposition~\ref{S4: contraction semigroup-Lp} to attain an analogous result to Theorem~\ref{S7: Thm: strong solution m=1}.
\begin{theorem}
\label{S8: Thm: PME m=1}
Let $(\M,g)$ be  an $(n+1)$-dimensional compact conical manifold. 
Assume that $m=1$ and $1<p<\infty$. 
Then for any $T>0$ and every $u_0\in L_p(\M)$, \eqref{S8: PME} has a unique  solution $u$ on $[0,T)$ such that
$$
u\in C^1([0,T),L_p(\M))\cap C([0,T),D( \underline{\Delta}_{F,p} )).
$$
In particular, when $u_0\in L_\infty(\M)$, $u$ is a strong solution and  satisfies (1)-(4) of Theorem~\ref{Thm: Porous Medium}.
\end{theorem}



\section*{Acknowledge}

Parts of the paper were completed while the second author was an assistant professor at Georgia Southern University. 
He would like to thank the staff and faculty at Georgia Southern University for all their support. 
We also would like to express our sincere
appreciation to the anonymous reviewer for carefully reading the manuscript.


\end{document}